\setlist[enumerate, 1]{1. }
\numberwithin{equation}{section}
\newcommand{\legendre}[2]{\left(\frac{#1}{#2}\right)}
\theoremstyle{definition}
\newtheorem{define}{Definition}[section]
\newtheorem{thm}[define]{Theorem}
\newtheorem{lem}[define]{Lemma}
\newtheorem{rmk}[define]{Remark}
\newtheorem{alg}[define]{Algorithm}
\newtheorem{prop}[define]{Proposition}
\newtheorem{cor}[define]{Corollary}
\newtheorem{obs}[define]{Observation}
\providecommand{\customgenericname}{}
\newcommand{\newcustomtheorem}[2]{%
  \newenvironment{#1}[1]
  {%
   \renewcommand\customgenericname{#2}%
   \renewcommand\theinnercustomgeneric{##1}%
   \innercustomgeneric
  }
  {\endinnercustomgeneric}
}
\begin{document}

\title{Elliptic curves of Fibonacci prime order over $\mathbb{F}_p$}

\author{Rosina Campbell}

\author{Duc Van Huynh$^1$}
\thanks{$ ^1$Corresponding author}

\author{Tyler Melton \and Andrew Percival}

\email{rc2283@stu.armstrong.edu,duc.huynh@armstrong.edu} 
\email{tm3746@stu.armstrong.edu, ap8822@stu.armstrong.edu}

\address{Department of Mathematics, Armstrong State University, Savannah, Ga 31419}

\date{\today}

\begin{abstract}
We will describe an algorithm to construct an elliptic curve $E_{f_q}$ over some prime field $\mathbb{F}_p$ such that such that $|E_{f_q}(\mathbb{F}_p)| = f_q$, where $f_q$ is a probable Fibonacci prime for some prime index $q$. The algorithm is a variant of the efficient CM-construction by Br$\ddot{o}$ker and Stevenhagen, which is well suited for Fibonacci primes due to their arithmetic properties. The time complexity of our algorithm is expected to be lower than $\widetilde{O}(\log^3({f_q}))$. The construction process is a series of algorithms, where each is a test for primality.
\end{abstract}

\maketitle

\section{Introduction}
Let $p > 3$ be a rational prime. Henceforth, for each elliptic curve $E$ over $\mathbb{F}_p$, we say that the order of $E$ is $|E(\mathbb{F}_p)|$, that is, the number of $\mathbb{F}_p$-rational points on $E$. There exists an elliptic curve $E$ of order $N$ for each integer $N$ in the Hasse interval $H_p = [p+1-2\sqrt{p},p+1+2\sqrt{p}]$ \cite[Theorem 14.18]{cox}. Note that $N \in H_p$ if and only if $p \in H_N$, which is a motivation behind the algorithm in \cite{broker2}. Hence, the construction of $E$ is possible exactly when $H_N$ contains a prime $p$. Under General Riemann Hypothesis (GRH), we can safely assume the existence of a prime $p$ in $H_N$ (see \cite{broker}).

This paper is on the study of constructing elliptic curves of Fibonacci prime order over finite fields. A Fibonacci prime is a Fibonacci number that is also prime. It is not known whether there is an infinite number of Fibonacci primes, though heuristics regarding elliptic divisibility sequence (EDS) from \cite{eds} suggests it may be finite. Constructing elliptic curves of Fibonacci order is of interest due to the fact that Fibonacci numbers grow exponentially, and the large width of the Hasse interval $H_{f_q}$ is expected to contain many primes, and Conjecture 4 of \cite{fibform} suggests that the time complexity of the construction may be smaller than it is for other primes. We will also see that the arithmetic properties of Fibonacci primes make some of the computations relatively easier. Furthermore, the construction process allows us to test its primality along the way, with the Elliptic Curve Primality Proving (ECPP) being the main test. \hfill \break

\noindent \textbf{Acknowledgment} This work is a year-long undergraduate research project with the students Rosina Campbell, Tyler Melton, and Andrew Percival. We were partially supported by the Armstrong State University Summer Research Session Grant and by the Armstrong Active Learning Grant. We would like to thank Ayman Bagabas, Chanukya Badri, Donald Hinton, and Keyur Patel for helpful discussion on computing square roots in finite field. Finally, we are grateful of Armstrong's Center for Applied Cyber Education for their continual support.
\section{Main algorithm}

Henceforth, we will use the following notations. We will denote the natural logarithm by $\log$ and for convenience we will write $\log^r(x) = (\log(x))^r$ for real numbers $r > 0 $. The notation $O(x)$ denotes the standard big O notation, and the notation $\widetilde{O}(x)$ means logarithmic terms in $x$ are disregarded.

Given a finite field $\mathbb{F}_p$, we will assume that all computations in $\mathbb{F}_p$ are done using the best known methods. For example, for fast multiplication we will assume the Fast Fourier Transform (FFT) method of \cite{fastmult}, which has time complexity $\widetilde{O}(\log(p))$. The fast multiplication technique is applicable here since the probable Fibonacci primes are at least $2^{50,000}$. For fast  exponentiation, we will assume the method of exponentiation by squaring (see \cite[ch. 1]{cohen1}), which has time complexity $\widetilde{O}(\log^2(p))$ if combined with the fast multiplication method. Similarly, using FFT, the time complexities of multiplication and exponentiation in $\mathbb{F}_p[X]$ are the same as in $\mathbb{F}_p$ for polynomials of small degrees.

Let $f_q$ be a probable Fibonacci prime. Here, we say that $f_q$ is a probable prime if at minimum $q$ is a prime (See Lemma \ref{primeindex}). Our work is based on the list of probable Fibonacci primes given at \cite{online}. We assume that these Fibonacci numbers have been tested under various algorithms, so we do not expect the algorithm to fail before Step \ref{sdisc}. Here in this work we try to construct an elliptic curve of order $f_q$ and test the primality of $f_q$ along the way. The guiding principle behind Algorithm \ref{mainalg} is to interpret all computations as primality tests, though some may be primitive, and we try to reuse all computations whenever possible.

\begin{alg} \label{mainalg} Let $f_q$ be a probable Fibonacci prime. This algorithm attempts to construct an elliptic curve $E/(\mathbb{Z}/p\mathbb{Z})$ of order $f_q$ over some ring $\mathbb{Z}/p\mathbb{Z}$ and performs multiple primality verifications along the way.
\begin{enumerate}[nolistsep]
\item \label{sdensity} Apply the Density Test \ref{densitytest}.
\item From Step \ref{sdensity} we obtain the set $P_q$ of primes $\ell < 2\log(f_q)$ such that $\legendre{f_q}{\ell} = 1$, and we also obtain the first prime $n$ such that $n$ is a quadratic non-residue modulo $f_q$. This $n$ may be greater than $2\log(f_q)$.
\item Use Algorithm \ref{goodD} to obtain a list $S_q$ of good discriminants, and let $N = |S_q|$.
\item \label{spre} Use $n$ to perform square root precomputations using Algorithm \ref{pre}. 
\item Verify that $f_{(q+1)/2}/f_{(q-1)/2} \pmod{f_q}$ is a square root of $-1 \pmod{f_q}$.
\item Apply the Exceptional Cases Test \ref{excases}.
\item Let $k=0$.
\item \label{sdisc} If $k = N$, then go to Step \ref{selkies}, else take $D = S_q[k] \in S_q$ and find a square root $\sqrt{D} \pmod{f_q}$ of $D$ using Algorithm \ref{ts}. Here if $D$ consists of two primes $\ell_1$ and $\ell_2$, then use the previously computed $\sqrt{\ell_1} \pmod{f_q}$ and $\sqrt{\ell_2} \pmod{f_q}$.
\item \label{ssplit} Apply Algorithm \ref{corn} to determine if $f_q$ split completely in $H_K$, the Hilbert class field of $K=\mathbb{Q}(\sqrt{D})$. If $f_q$ does split completely, we obtain $4f_q = x^2 + y^2|D|$ for some positive integers $x,y$. If this step is not successful, increase $k$ by $1$ and return to Step \ref{sdisc}.
\item \label{smodular} Precompute the classical modular polynomials $\Phi_{\ell}(X,Y)$ for primes $\ell < 6\log^2(4\log^2(f_q))$.
\item \label{secpp} Let $p=f_q+1 \pm x$. If it is easy to recognize that $p = k\eta$ for some prime $\eta > (f_q^{1/4}+1)^2$, then construct $E/(\mathbb{Z}/f_q\mathbb{Z})$ of order $f_q+1\pm x$ following Step \ref{spoly} and Step \ref{sconstruct} and apply ECPP (Theorem 13.4) to test the primality of $f_q$, else go to the next step.
\item \label{srm} Apply the Rabin-Miller Primality Test (Algorithm \ref{rm}) to $p$. If $p$ passes the test, then go the next step, else increase $k$ by $1$ and return to Step \ref{sdisc}.
\item \label{spoly} Compute $H_D(X) \pmod{p}$ using Algorithm \ref{crt}.
\item \label{sconstruct} Find a root $r \neq 0,1728$ of $H_D(X) \pmod{p}$. If no root is found, increase $k$ by $1$ and return to Step \ref{sdisc}, otherwise construct the curve
\begin{equation}
E: Y^2 = X^3 +aX-a,
\end{equation}
where 
\begin{equation}
a = \frac{27r}{4(1728-r)} \pmod{p}.
\end{equation}
\item Let $\mathcal{E}_q$ be an empty list.
\item \label{scheckorder} Append one of $(E,p,D)$ or $(E_{\text{twist}},p,D)$ to the list $\mathcal{E}_q$ for which $f_q \cdot (1,1) = 0$ is satisfied. If neither of the twists satisfy $f_q \cdot (1,1) = 0$, then increase $k$ by $1$ and return to Step \ref{sdisc}, else go to the next step.
\item If $f_q$ is a confirmed prime, then $p$ is prime as well by ECPP. Output $(E,p,D)$ and stop the algorithm. If the primality of $f_q$ is not confirmed, then increase $k$ by $1$ and return to Step \ref{sdisc}.
\item \label{selkies} Apply Algorithm \ref{elkies} (Elkies Primes Verification) to the list $\mathcal{E}_q$. If $\mathcal{E}_q$ is empty, then $f_q$ is likely composite, else go to the next step.
\item \label{seigenvalue} Apply Algorithm \ref{eigenvalue} (Eigenvalue Verification) to the list $\mathcal{E}_q$. If $\mathcal{E}_q$ is empty, then $f_q$ is likely composite, else go to the next step.
\item Output a random element $(E,p,D)$ from $\mathcal{E}_q$.
\end{enumerate}
\end{alg}

A small issue with Algorithm \ref{mainalg} is the primality of $p$. By ECPP (Theorem \ref{ecpp}), the verification $f_q \cdot (1,1) = 0$ in Step \ref{sconstruct} does confirm that $p = f_q+1\pm x$ is a prime if $f_q$ is known to be prime. However, $f_q$ is a probable Fibonacci prime that we wish to test the primality of.

If $p$ is confirmed to be prime, then $f_q$ is automatically prime by ECPP in Step \ref{secpp} and we have an elliptic curve $E/\mathbb{F}_p$ of order $f_q$. Even when $p$ fails to be prime, we can still apply ECPP to determine the primality of $f_q$. Explicitly, if $p = k\eta$ for some recognizable prime $(f_q^{1/4}+1)^2 < \eta < p$ , then we have that $f_q$ is a prime assuming that $k\cdot (1,1)$ is defined and not equal to $0$. If it is not easy to confirm the primality of $\eta$, then we move on to another discriminant. This is a common technique in applying ECPP (see \cite[pp. 597]{handbook}).

 The philosophy behind Step \ref{selkies} and Step \ref{seigenvalue} is to prolong computations in $\mathbb{Z}/p\mathbb{Z}$ to detect the compositeness of $p$, and we want to verify the order of each curve $E/(\mathbb{Z}/p\mathbb{Z})$ as well. We will use Schoof's algorithm to verify $t \pmod{\ell}$ for some small Elkies primes $\ell$ relative to $f_q$. In fact, the largest probable Fibonacci prime from the list \cite{online} has index $2904353$, so we need to compute $\Phi_{\ell}(X,Y)$ for Elkies primes $\ell < 5287$. Of course if storage capacity is not limited, then it is practical to precompute the modular polynomials $\Phi_{\ell}(X,Y)$ for large Elkies primes $\ell$ if multiple probable Fibonacci primes are to be tested. Note here that we can not use modular polynomials for Weber's function since $D \equiv 5 \pmod{8}$. A combination of isogeny volcanoes with other class invariant such as Ramanujan's class invariant(see \cite{konto}) can allow one to work with larger Elkies primes, but such implementation is beyond our capacity. See \cite{as5} for computation of the the class modular polynomials $\Phi_{\ell}(X,Y)$ for large primes $\ell$ via isogeny volcanoes, which suggests it is best to compute $\Phi_{\ell}(X,Y) \pmod{p}$ as needed.

\begin{thm} \label{complexity} Assuming GRH, the time complexity of the Algorithm \ref{mainalg} is $\widetilde{O}(\log^3(f_q))$. Furthermore, the space required is of size $\widetilde{O}(\log^2(f_q))$.
\end{thm}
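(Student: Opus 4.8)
The strategy is to classify the steps of Algorithm~\ref{mainalg} by how many times each is executed, to bound the cost of each class, and to take the maximum. The crucial structural observation is that the loop of Steps~\ref{sdisc}--\ref{scheckorder} is organized so that the cheap filters (the square root of $D$, the Cornacchia test of Algorithm~\ref{corn}, and Rabin--Miller) are applied to every candidate discriminant, whereas the genuinely expensive operations --- computing $H_D(X)\bmod p$ and extracting a root of it --- are reached only after $p=f_q+1\pm x$ has already passed Rabin--Miller. I would first fix the quantities that GRH controls: the least quadratic non-residue is $n=O(\log^2 f_q)$; the good-discriminant construction of Algorithm~\ref{goodD} gives $|D|=\widetilde O(\log^2 f_q)$, whence the class polynomial has degree $h(D)=O(\sqrt{|D|}\log|D|)=\widetilde O(\log f_q)$; and, since a fixed good discriminant yields a prime $p$ with density $\sim 1/\log f_q$, the expected number of discriminants processed before a prime is found is $\widetilde O(\log f_q)$, while the root extraction is triggered only $O(1)$ times. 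This accounting is exactly where GRH is used.

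Next I would record the cost of the primitives in $\mathbb F_p$ with $\log p=\Theta(\log f_q)$: a multiplication is $\widetilde O(\log f_q)$ and an exponentiation $\widetilde O(\log^2 f_q)$. The per-discriminant filters are cheap precisely because of the Fibonacci arithmetic and the precomputations of Algorithm~\ref{pre}: for $D=\ell_1\ell_2$ the square root $\sqrt D\bmod f_q$ is obtained from the stored $\sqrt{\ell_1},\sqrt{\ell_2}$ by a single multiplication, while the square root $\sqrt{-|D|}$ needed inside Cornacchia factors as $\sqrt{-1}\cdot\sqrt{\ell_1}\cdot\sqrt{\ell_2}$, with $\sqrt{-1}$ the verified Fibonacci quotient $f_{(q+1)/2}/f_{(q-1)/2}$. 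Hence Steps~\ref{sdisc} and \ref{ssplit} cost $\widetilde O(\log f_q)$ each and, over the $\widetilde O(\log f_q)$ iterations, contribute $\widetilde O(\log^2 f_q)$. Rabin--Miller (Algorithm~\ref{rm}) with $O(1)$ bases is $\widetilde O(\log^2 f_q)$ per candidate prime and is invoked $\widetilde O(\log f_q)$ times, so it already contributes $\widetilde O(\log^3 f_q)$.

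The terms of size $\widetilde O(\log^3 f_q)$ that set the bound come from a short list, and I would verify that none is exceeded. Besides Rabin--Miller, these are: the non-residue search inside the Density Test~\ref{densitytest}, which tests $\widetilde O(\log^2 f_q)$ integers, each by a Jacobi symbol costing $\widetilde O(\log f_q)$; the square-root precomputation of Algorithm~\ref{pre}, which performs one Tonelli--Shanks exponentiation of cost $\widetilde O(\log^2 f_q)$ for each of the $\widetilde O(\log f_q)$ primes of $P_q$; and --- the true bottleneck --- the root extraction in Step~\ref{sconstruct}, which forms $X^p\bmod H_D(X)$ by $O(\log p)$ multiplications modulo a polynomial of degree $h(D)=\widetilde O(\log f_q)$, each multiplication being $\widetilde O(\log^2 f_q)$ bit operations, for $\widetilde O(\log^3 f_q)$ in all; the CRT assembly of $H_D\bmod p$ by Algorithm~\ref{crt} is only $\widetilde O(|D|)=\widetilde O(\log^2 f_q)$. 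Here I would emphasize that the verification Algorithms~\ref{elkies} and \ref{eigenvalue} must be confined to the tiny Elkies primes $\ell<6\log^2(4\log^2 f_q)=\widetilde O(1)$ of Step~\ref{smodular}, so that computing $X^p$ modulo kernel polynomials of degree $\widetilde O(1)$ for $\widetilde O(1)$ primes costs only $\widetilde O(\log^2 f_q)$; running Schoof's algorithm to completion would cost $\widetilde O(\log^4 f_q)$ and break the claim.

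For the space bound I would observe that the largest objects in memory are the class polynomial $H_D(X)\bmod p$ and the intermediate remainders arising in its root extraction: a polynomial of degree $h(D)=\widetilde O(\log f_q)$ with coefficients modulo $p$, of total size $\widetilde O(\log f_q)\cdot\widetilde O(\log f_q)=\widetilde O(\log^2 f_q)$ bits. The stored square roots $\{\sqrt{\ell}\bmod f_q:\ell\in P_q\}$ occupy the same $\widetilde O(\log^2 f_q)$, while the lists $P_q$, $S_q$ and the tiny modular polynomials $\Phi_\ell$ occupy less, giving peak space $\widetilde O(\log^2 f_q)$. The main obstacle, in my view, is not any single estimate but the twofold bookkeeping of the first paragraph: proving under GRH that only $\widetilde O(\log f_q)$ discriminants are processed and that the costly root extraction fires $O(1)$ times, and certifying that the order-verification steps are deliberately truncated --- for if either the outer loop or the Schoof verification were counted naively, the exponent would climb from $3$ to $4$.
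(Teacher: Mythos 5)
Your proposal is correct and, at the global level, follows the same strategy as the paper: an amortized accounting, under GRH and Chebotarev, in which the loop over good discriminants runs only $\widetilde{O}(\log(f_q))$ times, the per-iteration filters are cheap, the square-root computations total $\widetilde{O}(\log^3(f_q))$, and the space is dominated by $H_D(X) \bmod p$. The difference is in the decomposition, and yours is finer and in one place more defensible. The paper's proof largely defers to the analysis of \cite{broker2} and asserts flatly that every step from Step \ref{ssplit} to Step \ref{scheckorder} costs at most $\widetilde{O}(\log^2(f_q))$ per call; taken literally this is off for the root extraction in Step \ref{sconstruct}, since computing $X^p$ modulo a polynomial of degree $\widetilde{O}(\log(f_q))$ costs $\widetilde{O}(\log^3(f_q))$ per invocation, and the paper's total survives only because of exactly the rarity you make explicit --- that this step fires only after Rabin--Miller passes, hence an expected $O(1)$ times. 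Your warning that Algorithms \ref{elkies} and \ref{eigenvalue} must stay confined to the precomputed small Elkies primes (full SEA being $\widetilde{O}(\log^4)$) is only implicit in the paper, which simply prices the two verifications at $\widetilde{O}(\log^2(f_q))$ and $\widetilde{O}(\log^3(f_q))$; likewise your direct size count for the space bound replaces the paper's citation of the $O(|D|^{1/2+\epsilon}\log(p))$ storage bound from \cite{as2}, with the same conclusion. Two cosmetic slips to fix: the per-prime Tonelli--Shanks computations of $\sqrt{\ell} \bmod f_q$ occur via Algorithm \ref{ts} inside Step \ref{sdisc} (for $D=-\ell$), not inside Algorithm \ref{pre}, which only precomputes roots in the $2$-Sylow subgroup (of bounded size by Observation \ref{2val}); and your $\widetilde{O}(\log^2(f_q))$ for the eigenvalue check is per curve, so over the $O(\log(f_q))$ entries of $\mathcal{E}_q$ it totals $\widetilde{O}(\log^3(f_q))$, which matches the paper and still respects the claimed bound.
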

\begin{proof}
Algorithm \ref{mainalg} is similar to the algorithm from \cite{broker2}, which has time complexity $\widetilde{O}(\log^3(f_q))$. The only difference here is that our algorithm is a bit more convoluted and we have the extra final verifications using Schoof's algorithm in Step \ref{selkies} and Step \ref{seigenvalue}, which have time complexity $\widetilde{O}(\log^2(f_q))$ and $\widetilde{O}(\log^3(f_q))$, respectively.

Each step has time complexity of at most $\widetilde{O}(\log^2(f_q))$. Even though our algorithm has $O(\log^2(f_q))$ loops, we will show that the steps that have time complexity $\widetilde{O}(\log^2(f_q))$ get called only  $O(\log(f_q))$ times.

The square root algorithm (Algorithm \ref{ts}) get called only in the case $D = -\ell$, which happens $O(\log(f_q))$ times. Since each square root computation takes time $\widetilde{O}(\log^2(f_q))$, the total time on computing square roots is $\widetilde{O}(\log^3(f_q))$.

By the Chebotarev Density Theorem, it is expected to find $O(\log(f_q))$ discriminants $D$ for which $4f_q = x^2 + y^2|D|$ for some positive integer $x,y$; we will see this fact in Section \ref{efficient}. Since the time complexity of each step from Step \ref{ssplit} to Step \ref{scheckorder} is at most $\widetilde{O}(\log^2(f_q))$, we have a total time complexity of $\widetilde{O}(\log^3(f_q))$ if we loop through all such discriminants. Therefore, the entire algorithm has time complexity $\widetilde{O}(\log^3(f_q))$.

The main step in Algorithm \ref{mainalg} requiring the most storage is in computing $H_D(X) \pmod{p}$. By \cite{as2}, the storage needed to computed $H_D(X) \pmod{p}$ is $O(|D|^{1/2 + \epsilon} \log(p))$. As $D = O(\log(f_q)^2)$ and $p = O(f_q)$, it follows that the storage required for Algorithm \ref{mainalg} is $\widetilde{O}(\log^2(f_q))$.

\end{proof}

It is of small concern (or of great fortune) if Conjecture 4 of \cite{fibform} is true. Let $d$ be an integer, and let $\mathcal{N}_d$ be the set of positive integers defined by
\begin{equation}
\mathcal{N}_d = \{n > 0: f_n = |x^2+dy^2| \text{ for some integers $x$ and $y$} \}.
\end{equation}
The lower asymptotic density $\underline{\delta}(\mathcal{N}_d)$ of $\mathcal{N}_d$ is defined by
\begin{equation}
\underline{\delta}(\mathcal{N}_d) = \liminf_{n \rightarrow \infty} \frac{1}{n} \sum_{k=1}^n \chi_{\mathcal{N}_d}(k) = \liminf_{n \rightarrow \infty} \frac{\mathcal{N}_d \cap [1,n]}{n},
\end{equation}
where $\chi_{\mathcal{N}_d}$ is the characteristic function of the set $\mathcal{N}_d$. The conjecture states that the lower asymptotic density $\underline{\delta}_{\mathcal{N}_d}$ is $0$ for all but finitely many integers $d$ not a square or negative of a square.

For a Fibonacci prime $f_q$ and a positive square-free integer $d$, we have $f_q = x^2 + dy^2$ for some integers $x,y$ exactly when $f$ splits completely in the ring class field of the imaginary quadratic field $K=\mathbb{Q}(\sqrt{-d})$. Here the ring class field is the extension $R_K/K$ corresponding to the ideal class group $C(\mathbb{Z}[\sqrt{-d}])$ given by Class Field Theory. The extension $R_K/H_K$ is of degree $2$ exactly when $-d \equiv 1 \pmod{4}$ and degree $1$ when $-d \equiv 3 \pmod{4}$, where $H_K$ is the Hilbert class field corresponding to the maximal order $\mathcal{O}_K$.

Let $h_d$ be the class number of $\mathcal{O}_K$. By the Chebotarev Density Theorem, the density of the rational primes that split completely in $H_K$ is $1/2h_d$, and half of those split completely in $R_K$, that is, $1$ out $4h_d$ rational primes split completely in $R_K$. The conjecture implies that if $d$ is large enough, only a few Fibonacci primes split completely in $R_K$, at best it is an infinite set of density $0$, which implies that the Fibonacci primes splitting completely in $H_K$ has lower asymptotic density $0$ as well. This may pose difficulty to Algorithm \ref{mainalg} due to its reliance on finding a Hilbert class field $H_K$ for which $f_q$ splits completely. On the other hand, the conjecture also suggests that the Fibonacci primes only split completely in Hilbert class fields (induced by \textit{small} discriminant). This is fortunate as we wish to find a small fundamental discriminant $D$ for which $4f_q = x^2 + y^2|D|$ for some integers $x,y$ such that $f_q + 1 \pm x$ is a prime, so Step \ref{ssplit} has a higher chance of success. Hence, the time complexity may be lower than $\widetilde{O}(\log^3(f_q))$. 

\section{Overview} 
In Section \ref{cm}, we will see that one could in theory construct an elliptic curve of order $f_q$ by picking a prime $p$ in the Hasse interval $H_{f_q}$ and finding a root of $H_D(X) \pmod{p}$, where $D = (p+1-f_q)^2 - 4p$. Of course, this is highly impractical as the discriminant $D$ may be too large, and there does not exists an efficient method to find the fundamental discriminant induced by $D$. In \cite{broker2}, it is observed that 
\begin{equation}
D = (p+1-f_q)^2 - 4p = (f_q+1-p)^2-4f_q.
\end{equation}
This observation allows us to construct a suitable fundamental discriminant from a basis of primes (Section \ref{efficient}). This is a small discriminant that induces a class field in which both of the primes $p$ and $f_q$ split completely.

We will be mainly implementing the complex multiplication method for the construction, and we will use the algorithm from \cite{broker2} to find a small discriminant. One major hurdle of \cite{broker2} in constructing an elliptic curve of prime order $N$ is to compute a square root of $(-1)^{\frac{\ell-1}{2}}\ell$ modulo $N$ for various primes $\ell$,  but we will see that for each probable Fibonacci prime $f_q$ it is a straightforward application of the Tonelli-Shanks algorithm because the 2-Sylow subgroup of $\mathbb{Z}/(f_q-1)\mathbb{Z}$ is small.

We will provide in Section \ref{cm} a brief overview of complex multiplication and its application toward constructing elliptic curves of prescribed torsion. In Section \ref{efficient} we will described the algorithm from \cite{broker2} to efficiently find small discriminant.  We will provide a quick overview of the CRT method in Sections \ref{volcano} to  \ref{classgroup} following \cite{as2}. We will go over computing square roots in finite field and Cornacchia's algorithm in Section \ref{roots}.  In Section \ref{schoof} we will discuss Schoof's algorithm and provide a way to verify that we have the correct curve. We will go over some elementary properties of Fibonacci numbers in Section \ref{fibonacci}. We will look into equations of the form $4f_q = x^2+dy^2$ for special $d$'s in Sections \ref{exceptional}. Finally, in Section \ref{bettertests} we will discuss the primality tests that are used in our algorithm.

\section{Complex multiplication and application} \label{cm}
In this section, we will provide an overview of complex multiplication and its application in constructing elliptic curves of prescribed order. For further discussion on complex multiplication see \cite{schoof}, \cite{am}, \cite[pp. 190--196]{cox}, \cite[pp.95-100]{silverman2}, \cite[pp. 455-460]{handbook}, and \cite{chenal}

Henceforth, let $D$ be a negative discriminant. The polynomial $F(x,y) = aX^2+bXY+cY^2$ is called a binary quadratic form, where $a,b,c \in \mathbb{Z}$. We say that the form $F$ is reduced if $\gcd(a,b,c)=1$, and $a,b$, and $c$ satisfy the condition
\begin{equation}
|b| \leq |a| \leq c \text{ and } b \geq 0 \text{ whenever } |b| = a \text{ or } a = c.
\end{equation}
The discriminant of $F$ is defined by $D = b^2 - 4ac$.

To each form $F$, we associate the matrix
\begin{equation}
M_F = \begin{bmatrix}
       a & \frac{b}{2} \\
       \frac{b}{2} & c 
     \end{bmatrix}.
\end{equation}
Two forms $F_1,F_2$ are said to be equivalent if there exists a matrix $N \in \text{SL}_2(\mathbb{Z})$ such that

\begin{equation}
M_{F_2} = N^{-1} M_{F_1} N.
\end{equation}

The relation defines an equivalence relation on quadratic forms. The set of equivalence classes $C(D)$ of quadratic forms with discriminant $D$ forms an abelian group, and each class contains exactly one reduced form by Theorem 2.8 of \cite{cox}. We will call $C(D)$ the class group induced by  discriminant $D$. 

Let $K = \mathbb{Q}(\sqrt{d})$ for some rational integer $d < 0$, and let $O_K$ be its ring of integers. Let $\mathcal{O} \subset O_K$ be an order of index $f$, which is called the conductor of $\mathcal{O}$. The discriminant of $\mathcal{O}$ is $D = \text{Disc}(\mathcal{O}) = f^2d_K$, where $D_K$ is the field discriminant (or fundamental discriminant) of $K$. From \cite[Theorem 5.30]{cox}, we have 
\begin{equation}
C(D) \cong C(\mathcal{O}),
\end{equation}
where $C(\mathcal{O})$ is the ideal class group of $\mathcal{O}$. Explicitly, the isomorphism $C(D) \xrightarrow{\sim} C(\mathcal{O})$ above is given by 
\begin{equation} \label{cgcorr}
aX^2+bXY + cY^2 \mapsto [a,(-b+\sqrt{D})/2],
\end{equation}
where $aX^2+bXY + cY^2$ is a reduced form of discriminant $D$.
This provides an easy way to study the group $C(\mathcal{O})$, and in particular, to compute the class order.

As $C(\mathcal{O})$ is a quotient of the ray class group of conductor $\mathfrak{f} = f\mathcal{O}_K$ of $K$, Class Field Theory (see \cite[Theorem 8.6]{cox}) tells us that there exists a unique abelian extension $L/K$ such that
\begin{equation}
C(\mathcal{O}) \cong \text{Gal}(L/K),
\end{equation}
where the isomorphism is given by the Artin map.

Let $\mathbb{H}$ be the upper half of the complex plane, and let $h$ be the order of $C(D)$. Then the minimal polynomial $H_D(x)$ of $L/K$ is given by
\begin{equation} \label{hilbertpoly}
H_D(x) = \prod_{i=1}^h (x-j(\tau_i)),
\end{equation}
where $\tau_i \in \mathbb{H}$ is a root of $F(x,1)$, the reduced form representing the class $[F(x,y)]$ in $C(D)$, and $j$ is the well-known $j$-invariant function ($\mathbb{C}$-isomorphism)
\begin{equation}
j: X(1) \rightarrow \mathbb{P}^1(\mathbb{C}),
\end{equation}
where $X(1)$ is the modular curve 
\begin{equation}
X(1) = \frac{\mathbb{H} \bigcup \mathbb{P}^1(\mathbb{Q})}{\text{SL}_2(\mathbb{Z})}
\end{equation}
and $\mathbb{P}^1(\mathbb{C})$ is the Riemann sphere.
Moreover, the Fourier series expansions of $j$ begins with
\begin{equation} \label{fourier}
j(\tau) = q^{-1}+744+196884q+2149376q^2 + \dots,
\end{equation}
where $q = e^{2\pi i \tau}$. One of the miracles in Explicit Class Field Theory is that $L = K(j(\tau))$, where $j(\tau)$ is any root of $H_D(X)$ - fulfilling Kronecker's jugendtraum.

The polynomial $H_D(X)$ in Equation \ref{hilbertpoly} is called the Hilbert class polynomial associated with the order $\mathcal{O}$. It seems that it is standard to call $H_D(X)$ a Hilbert class polynomial regardless of whether $\mathcal{O}$ is maximal. We will continue such naming standard.

Computing the Hilbert class polynomial can be quite difficult. Besides the already daunting time complexity in computing $H_D(X)$, the storage required to store its coefficients may be beyond practical purpose. For example, it requires $47.2$ petabytes of storage in constructing $H_D(X)$ for $D = -(10^{16}+135)$ (see \cite{as3}). The complex-analytic method of computing $H_D(X)$ is to approximate each root using the expansion \ref{fourier}, and to verify for accuracy, we use the fact that $H_D(x)$ has integer coefficients, and $\sqrt[3]{H_D(0)} \in \mathbb{Z}$, a consequence of the work of Gross and Zagier \cite[Proposition 7.1]{am}. There are two other known methods to compute $H_D(X)$: the $p$-adic lifting method (see \cite{broker3}), and application of isogeny volcanoes and Chinese Remainder Theorem (see \cite{as2} and its accelerated version \cite{as3}). We will see a quick overview of isogeny volcanoes in Section \ref{volcano}. It is interesting to note that the complex-analytic method has to deal with rounding errors, while the p-adic lifting method circumvent that by working in a non-archimedean setting.

There is a correspondence between representatives of $C(\mathcal{O})$ and $\mathbb{C}$-isomorphism class of elliptic curves with endomorphism ring isomorphic to $\mathcal{O}$ (see \cite[Corollary 10.20]{cox}). Viewing each ideal $\mathfrak{a}$ of $\mathcal{O}$ as a lattice of $\mathbb{C}$, the correspondence is given by
\begin{equation}
\mathfrak{a} \mapsto \mathbb{C}/\mathfrak{a},
\end{equation} 
and in view of the correspondence from \ref{cgcorr}, we have
\begin{equation}
j((a+b\sqrt{D})/2) = j(\mathbb{C}/\mathfrak{a}).
\end{equation}
It is now straightforward to obtain an algebraic model for each curve $\mathbb{C}/\mathfrak{a}$. For each root $r \neq 0,1728$ of $H_D(X)$, let $E_{r}/L$ be the elliptic curve given 
\begin{equation}
Y^2 = X^3 + aX-a,
\end{equation}
where $a = \frac{27r}{4(1728-r)}$. While if $r = 0$, let $E_r/L$ be given by
\begin{equation}
Y^2 = X^3+1,
\end{equation}
and if $r=1728$, let $E_r/L$ be given by
\begin{equation}
Y^2 = X^3+X.
\end{equation}
The elliptic curve  $E_r/L$ has coefficients in  $L$, and its $j$-invariant is $j=r$. Furthermore, the endomorphism ring End$(E_r)$ is isomorphic to $\mathcal{O}$. Henceforth, we will identify End$(E_r)$ with $\mathcal{O}$.

Let $p$ be a rational prime that splits in $K$, and let $\mathfrak{p}$ be a prime ideal of $\mathcal{O}_{L}$ that divides the ideal $(p)$. Assume that $\mathfrak{p} \nmid \Delta(E_r)$, where $\Delta(E_r) = -16(4a^3+27a^2)$ is the discriminant of $E_r$, then $E$ has good reduction at $\mathfrak{p}$. The reduction $\overline{E}$ of $E$ mod $\mathfrak{p}$ has coefficients in some finite extension $\mathbb{F}_q$ of $\mathbb{F}_p$, so in the case that $p$ splits completely in $L$ or the class group $C(D)$ has order $1$, the reduction $\overline{E}$ has coefficients in $\mathbb{F}_p$. The endomorphism ring of $\overline{E}$ is isomorphic to $\mathcal{O}$ and its j-invariant is a root of $H_D(X) \pmod{q}$. Deuring's Reduction Theorem tells us that every elliptic curve over $\mathbb{F}_q$ with endomorphism ring isomorphic to $\mathcal{O}$ arises this way (see \cite[Theorem 14.16]{cox}). Moreover, we have 
\begin{equation}
|\overline{E}(\mathbb{F}_q)|=q+1-t,
\end{equation} 
where $t = \pi + \overline{\pi}$ for some $\pi \in \mathcal{O}$ such that $q = \pi \overline{\pi}$. If we have an element $\beta \in \mathcal{O}$ such $q = \beta \overline{\beta}$, then $\beta/\pi \in \mathcal{O}^{\times}$. As we will be working with $\mathcal{O}$ with discriminant $D < -4$, the group of units $\mathcal{O}^{\times} = \{\pm 1\}$, so $\beta + \overline{\beta}$ may differ from $t$ by a negative sign.

We observe that the roots of $H_D(X)$ are the j-invariants of all elliptic curves $E/L$ with endomorphism ring $\mathcal{O}$. Let Ell$_{\mathcal{O}}(L)$ be the set of all roots of $H_D(X)$. The ideal class group $C(\mathcal{O})$ provides a free transitive group action on the set Ell$_{\mathcal{O}}(L)$. To see the group action, let $\legendre{\boldsymbol{\cdot}}{L/K}: C(\mathcal{O}) \rightarrow \text{Gal}(L/K)$ be the Artin map. For an invertible ideal $\mathfrak{a} \in \mathcal{O}$, we have
\begin{equation} \label{shimura}
\legendre{\mathfrak{a}}{L/K}(j(E)) = j(E/E[\mathfrak{a}]),
\end{equation}
where $E[\mathfrak{a}]$ is the $\mathfrak{a}$-torsion of $E$ (see \cite[ch. 11]{cox}). The fact that this action is free and transitive follows from the fact that $\mathbb{C}/\mathfrak{a}$ determines an isomorphism class of elliptic curves over $L$ with endomorphism ring $\mathcal{O}$ (see \cite{as2}, \cite[Corollary 10.20]{cox} and \cite{broker3} for further details).

As $C(\mathcal{O}) \cong C(D)$ and there is a bijection between Ell$_{\mathcal{O}}(\mathbb{F}_p)$ and Ell$_{\mathcal{O}}(L)$ by Deuring lifting theorem, there is a free transitive group action of $C(D)$ on Ell$_{\mathcal{O}}(\mathbb{F}_p)$. Hence, if we have one root $j_0$ of $H_D(X) \pmod{p}$, we can obtain the rest by computing the orbit of the group action of $C(D)$ on $j_0$. This fact is used in \cite{as2}, which we will see an overview of in Section \ref{volcano}.

As mentioned earlier, the reduction $\overline{E}$ at $\mathfrak{p}$ has coefficients in $\mathbb{F}_p$ exactly when $\mathfrak{p}$ splits completely in $L$ and $p$ splits in $K$, which happens exactly when $H_D(X) \pmod{p}$ splits completely over $\mathbb{F}_p$ (see \cite[Theorem 5.1, Theorem 9.2]{cox}). Since $L/K$ is Galois, $H_D(X) \pmod{p}$ splits completely exactly when $H_D(X) \pmod{p}$ has a root in $\mathbb{F}_p$ for $p \nmid D$. On the other hand, from Class Field Theory the prime ideal $\mathfrak{p}$ splits completely in $L$ exactly when $\mathfrak{p}$ is principal, which happens exactly when the rational prime $p$ is a norm in $\mathcal{O}$, that is, there exists integers $x,y$ such that
\begin{equation}
4p = x^2+y^2|D|.
\end{equation}
Hence, $H_D(X) \pmod{p}$ has a root if and only if the $4p = x^2+y^2|D|$ for some positive integers $x,y$.

The previous paragraphs provide a method to construct elliptic curves over a prime field of prescribed order. Indeed, let $N$ be a positive integer and let $t = p+1-N$, where $p$ is a prime so that $|t| \leq 2\sqrt{p}$, that is, $p$ is in the Hasse's interval $H_N$. Let $D = (p+1-N)^2-4p$, and compute $H_D(X) \pmod{p}$. We find a root $r \neq 0,1728$ in $\mathbb{F}_p$ of $H_D(X) \pmod{p}$, which exists because the equation $4p = X^2+Y^2|D|$ has the solution $(t,1)$. Compute $a = \frac{27r}{4(1728-r)} \pmod{p}$, and consider the elliptic curve $E/\mathbb{F}_p$ defined by $E: Y^2 = X^3+aX-a$. The order of $E$ is $p+1\pm t$, so we may have to compute its quadratic twist (see Proposition 5.4 of \cite{silverman1})
\begin{equation}
E_{\text{twist}}: Y^2 = X^3 + g^2aX-g^3a
\end{equation}
if necessary to find the one with order $N = p+1-t$, where $g$ is any quadratic non-residue modulo $p$. The fact that the point $(1,1)$ lies on $E$ allows us to quickly determine which curve has the correct order. For the cases $r=1728$ or $r=0$, the set of twists of the curves $Y^2=X^3+X$ and $Y^2 = X^3+1$ correspond to $\mathbb{F}_p^*/(\mathbb{F}_p^*)^4$ and $\mathbb{F}_p^*/(\mathbb{F}_p^*)^6$, respectively (see Proposition 5.4 of \cite{silverman1}).

\begin{alg} \label{complex} Complex Multiplication algorithm to construct an elliptic curve of order $N$ over some prime field.
\begin{enumerate}[nolistsep]
\item Find a prime $p \in H_N$.
\item Compute $D = (p+1-N)^2 - 4p$.
\item Compute the Hilbert class polynomial $H_D(X)$.
\item Find a root $r$ of $H_D(X) \pmod{p}$.
\item If $r \neq 0,1728$, construct the curve $E: Y^2 = X^3+aX-a$, where $a = (27r)/(4(1728-r)) \pmod{p}$.
\item If $r = 0$, take $E: Y^2 = X^3+1$, and if $r = 1728$, take $E: Y^2 = X^3+X$.
\item Test the point $(1,1)$ of $E$, that, is verify
\begin{equation}
(p+1)(1,1) = t(1,1).
\end{equation}
\item Compute a twist of $E$ if necessary.
\end{enumerate}
\end{alg}

A careful analysis shows that instead of using $D = (p+1-N)^2 - 4p$, we could use the fundamental discriminant $D_K = D/k^2$, for some integer $k$. Here, fundamental discriminant is equivalent to the field discriminant of $K = \mathbb{Q}(\sqrt{D})$. As the fundamental discriminant of $D$ is essentially its square-free part, computing the fundamental discriminant of $D$ is not practical as computing the square-free part of an integer is exceedingly difficult.

Constructing elliptic curves of prescribed torsion has its applications in cryptography (ECC), mainly in creating keys for encryption systems such as AES. In the coming age (or current age) of quantum computers, a system such as ECC will be (is) breakable. There has been active research into post-quantum cryptography in the last decade to find a system that could withstand a quantum computer. For example, the Supersingular Isogeny Diffie–-Hellman Key Exchange has been shown to be a great candidate (see \cite{super}).

\begin{rmk}
Instead of using the Hilbert class polynomial, one could also use \textit{smaller} class polynomials such as Weber class polynomials (see \cite{classpoly1}) and Ramanujan's class polynomial (see \cite{classpoly2}). Among the well-known class polynomials, the data from \cite{classpoly2} shows that the Ramanujan's class polynomials are best for generating elliptic curves of prime order. One could also in theory find other class invariants with smaller class polynomials using a variant of Shimura Reciprocity (see \cite{gee} and \cite{konto}). Here we say that $\mathfrak{f}(\tau))$ is a class invariant of a Hilbert class field $H_K$ if $H_K = K(\mathfrak{f}(\tau))$, where $\mathfrak{f}$ is a modular function of some level and $\tau \in \mathcal{O}_K$. However, as worded best by Kontogeorgis in \cite{konto}: \textit{So far it seems that all known class invariants were found out of luck or by extremely ingenious people like Ramanujan}.
\end{rmk}

\section{An efficient CM-construction} \label{efficient}
The following is a discussion of an algorithm described in \cite{broker} and \cite{broker2}. The algorithm reduces the time in calculating the Hilbert class polynomial by minimizing $|D|$, which can be done by constructing $D$ from a set of basis of primes.

Let $N$ be a rational prime. Recall that in constructing an elliptic curve of order $N$, a bottleneck is to construct the Hilbert class polynomial $H_D(X)$, where $D = (p+1-N)^2-4p$ and $p$ belongs in the Hasse's interval $H_N$. Even though it is best to use the field discriminant of $\mathbb{Q}(\sqrt{D})$, which is essentially the square-free part of $D$, the time complexity is too large for practical purpose as there does not exist a known polynomial time algorithm in computing the square-free part of an integer.

Instead of using a top-down approach as above in finding $D$, we can construct $D$ from a set of basis of primes. Note that given a prime $p \in H_N$, we have the discriminant
\begin{equation} \label{rewrite}
(p+1-N)^2 - 4p = (N+1-p)^2-4N = k^2D,
\end{equation}
for some fundamental discriminant $D$. It follows that for a fundamental discriminant $D$, if we can find a solution to the equation
\begin{equation}
x^2+y^2|D| = 4N,
\end{equation}
for some positive integers $x,y$ with $p=N+1\pm x$ prime, then we can construct an elliptic curve $E/\mathbb{F}_p$ of order $N$. Here we are using the symmetry $N \in H_p$ if and only if $p \in H_N$. Hence, we are trying to find a discriminant $D$ for which both $H_D(X) \pmod{p}$ and $H_D(X) \pmod{N}$ split completely.

From equation \ref{rewrite}, we note that for any odd prime $\ell \mid D$, we have 
\begin{equation} \label{goodprimes}
1 = \legendre{N}{\ell} = \legendre{(-1)^{\frac{\ell-1}{2}}\ell}{N},
\end{equation}
where the second equality comes from the Law of Quadratic Reciprocity. Letting $\ell^* = (-1)^{(p-1)/2}l$, we find that $D$ consists of primes $\ell$ for which $\ell^*$ is a quadratic residue modulo $N$. Moreover, since $N$ is assumed to be a rational prime, we must have $D \equiv 5 \pmod{8}$. Hence, we see that $D$ is a product of the primes $\ell$ satisfying Equation \ref{goodprimes}.

\begin{rmk} The fact above regarding the primes $\ell$ dividing $D$ can be easily seen using Class Field Theory. Recall that for a rational prime $N$, the equation $4N = x^2+y^2|D|$ has a solution in $\mathbb{Z}^2$ exactly when $N$ splits completely in the Hilbert class field $K_D$ of the quadratic field $K = \mathbb{Q}(\sqrt{D})$. In particular, $N$ splits completely in any subfield of $K_D$ of $K$. Hence, $N$ must splits completely in the genus field $G_D$ (see \cite[Theorem 6.1]{cox}) and all of its quadratic subfields $\mathbb{Q}(\sqrt{\ell^*})$ for primes $\ell \mid D$, which happens exactly when $\ell^*$ is a square modulo $N$.
\end{rmk}

The Prime Number Theorem states that
\begin{equation}
\lim_{x\rightarrow \infty} \frac{\pi(x)}{x/\log(x)} = 1,
\end{equation}
where $\pi(x)$ is the number of rational primes less than or equals to $x$. It follows that for integer $N$ sufficiently large, we expect $1$ out $\log(N)$ to be prime. Hence, instead of searching for all suitable primes $\ell$ in the interval $[1,N]$ at once, we search for $\ell$ in one sub-interval at a time starting with $[1,\log(N)]$ and ends with $[(m-1)\log(N)+1,N]$, where $m = \lfloor N/\log(N) \rfloor$. However, such a partition of the interval $[1,N]$ is only for ease of computing time complexity; in practice, we work with one prime $\ell$ at a time, which is better for Fibonacci primes due to Conjecture 4 of \cite{fibform}. Furthermore, It is noted in \cite{broker} that it is enough to consider $D$ comprising of at most two odd primes, where each prime $\ell < 2 \log(N)$.

In the case of a Fibonacci prime $f_q$, note that $\legendre{\ell^*}{f_q} = \legendre{\ell}{f_q}$ due to the fact that $f_q \equiv 1 \pmod{4}$, by Lemma \ref{mod4}. Let $P_q$ be the list of all primes $\ell < 2\log(f_q)$ such that $\legendre{\ell}{f_q} = 1$. We will describe an algorithm to construct a list $S_q$ of good discriminants from $P_D$. Here, we say that $D$ is a good discriminant if $D$ is of the form $D = -\ell_0$ or $D = - \ell_1 \ell_2$, and $D \equiv 5 \pmod{8}$, where $\ell_0,\ell_1,\ell_2 \in P_q$.

\begin{alg} \label{goodD} Suppose that we are given a list $P_q$ of odd primes $\ell < 2 \log(f_q)$ such that $\legendre{\ell}{f_q} = 1$. Let $N$ be the cardinality of $P_q$ and assume that the primes $\ell$ are listed in increasing order. This algorithm create the list $S_q$ of good discriminants.
\begin{enumerate}[nolistsep] 
\item Let $S_q$ be an empty list.
\item Let $k=0$.
\item If $k=N$, end the algorithm, else let $D = -P_q[k]$.
\item If $D = -P_q[k] \equiv 5 \pmod{8}$, append $D$ to $S_q$.
\item For $m=0,\ldots,k-1$, if $D = -P_q[m]\cdot P_q[k] \equiv 5 \pmod{8}$, append $D$ to $S_q$.
\item Increase $k$ by $1$ and return to Step 3.
\end{enumerate}
\end{alg}

This efficient construction of $D$ provides a degree of control of the class number. For security reason, we do not want $D$ to be too small as ECC can be attacked via using an isogenous curve. To ensure that $D$ is not too small, we could use the well known fact that the class number of $C(D)$ is approximately $\sqrt{-D}$ by Brauer-Siegel Theorem. Furthermore, the work of Goldfeld, Gross, Zagier and Osterle in the 1980s provide an easily  computable lower bound (see \cite{zag} and \cite[pp. 135]{cox}):
\begin{equation}
h(D) > \frac{1}{K} \log(-D)\prod_{p \mid D}^*\left(1-\frac{2\sqrt{p}}{p+1}\right),
\end{equation}
where $K = 55$ if $\gcd(D,5077)=1$ and $K=7000$ otherwise, and the product is taken over all prime divisors $p$ of $D$ except the largest prime.

Now we will approximate a lower bound for the expected number of such $D$ following \cite{broker2}, but we will provide a bit more details.  Recall that in the Algorithm \ref{mainalg}, we loop through the good discriminants $D \in S_q$ until we can find one such that $4f_q = x^2 + y^2|D|$ and $f_q + 1 \pm x$ is a prime for some positive integers $x,y$. As mentioned in Theorem \ref{complexity}, we should expect to find $O(\log(f_q))$ many $D$ by the Chebotarev Density Theorem, which follows from Lemma \ref{enough} with the bound $B = O(\log^2(f_q))$

\begin{thm}
Let $K_1,\ldots,K_n$ be distinct imaginary quadratic fields of odd class number. Let $H_1,\ldots,H_n$ be ring class fields of $K_1,\ldots,K_n$ respectively, such that $[H_i:K_i]=n_i$ are all odd. Let $H=\prod_{i=2}^n H_i$. Then $H_1 \cap H = \mathbb{Q}$.
\end{thm}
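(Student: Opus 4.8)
The plan is to reduce everything to the single claim that $K_1 \not\subseteq H$, and then to prove that claim from genus theory plus a parity argument. Write $G_i = \mathrm{Gal}(H_i/\mathbb{Q})$ and $A_i = \mathrm{Gal}(H_i/K_i)$, so $|A_i| = n_i$ is odd. Since $H_i$ is a ring class field of the imaginary quadratic field $K_i$, it is Galois over $\mathbb{Q}$ and $G_i$ is generalized dihedral, $G_i \cong A_i \rtimes \langle \tau_i \rangle$, where $\tau_i$ restricts to complex conjugation and acts on $A_i$ by inversion (see \cite[Ch.~9]{cox}). My first step is to classify the subfields of $H_1$ normal over $\mathbb{Q}$. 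Because $|A_1|$ is odd, squaring is an automorphism of $A_1$, and a short computation shows the only normal subgroups of $G_1$ are the subgroups of $A_1$ together with $G_1$ itself: a normal subgroup meeting the coset $A_1\tau_1$ satisfies $c^2 \in N$ for all $c \in A_1$ after conjugating a reflection by $c$, hence contains $A_1^2 = A_1$ and a reflection, so equals $G_1$. By the Galois correspondence, every subfield of $H_1$ normal over $\mathbb{Q}$ and different from $\mathbb{Q}$ contains $K_1 = H_1^{A_1}$. Now $L := H_1 \cap H$ is an intersection of Galois extensions of $\mathbb{Q}$, hence normal over $\mathbb{Q}$, and it sits inside $H_1$; thus either $L = \mathbb{Q}$ or $L \supseteq K_1$. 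As $L \subseteq H$, the latter forces $K_1 \subseteq H$, so it suffices to rule this out.

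For the genus-theoretic input I would use that each $K_i$ has odd class number, so its fundamental discriminant $D_i$ is a prime discriminant, i.e. $D_i \in \{-4,-8\} \cup \{-p : p \equiv 3 \pmod{4}\}$ has exactly one prime factor (see \cite[Ch.~3]{cox}). In the $\mathbb{F}_2$-vector space $\mathbb{Q}^\times/(\mathbb{Q}^\times)^2$ with basis $\{-1\}\cup\{\text{primes}\}$, these classes are $-4 \mapsto (-1)$, $-8 \mapsto (-1)(2)$, and $-p \mapsto (-1)(p)$, which are linearly independent; since the $K_i$ are distinct, the classes of $D_1,\dots,D_n$ are independent. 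In particular $D_1$ is not in the span of $D_2,\dots,D_n$, so $K_1 \not\subseteq F := K_2 K_3 \cdots K_n$, and moreover $[F:\mathbb{Q}] = 2^{\,n-1}$.

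It then remains to confine every quadratic subfield of $H$ to $F$. Let $G = \mathrm{Gal}(H/\mathbb{Q})$, embedded by restriction in $\prod_{i=2}^n G_i$, and set $A = G \cap \prod_{i=2}^n A_i$. Then $A \trianglelefteq G$ has odd order, and $G/A$ embeds in $\prod_{i=2}^n (G_i/A_i) \cong (\mathbb{Z}/2)^{\,n-1}$. Each $K_i$ is fixed by $A$, so $F \subseteq H^{A}$; comparing degrees via $[F:\mathbb{Q}] = 2^{\,n-1} \ge |G/A| = [H^A:\mathbb{Q}]$ yields $H^A = F$. If $K_1$ were contained in $H$, it would be the fixed field of the kernel of a surjection $\phi\colon G \to \mathbb{Z}/2$; but $A$ has odd order, so $\phi(A)=0$, giving $A \subseteq \ker\phi$ and hence $K_1 \subseteq H^A = F$, contradicting $K_1 \not\subseteq F$. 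Thus $K_1 \not\subseteq H$, and combined with the first paragraph this forces $L = \mathbb{Q}$.

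The main difficulty, and the crux of the argument, is the joint use of the two oddness hypotheses. Oddness of $n_i = [H_i:K_i]$ is precisely what makes the rotation subgroups $A_i$ invisible to maps into $\mathbb{Z}/2$ (yielding both the normal-subfield dichotomy for $H_1$ and the vanishing $\phi(A)=0$), while oddness of the class numbers is what forces the $D_i$ to be prime discriminants and therefore $\mathbb{F}_2$-independent. If either parity were dropped, $H$ could acquire extra quadratic subfields -- a new index-two quotient of some $A_i$, or a dependency among the $D_i$ coming from a non-prime discriminant -- and then $H_1 \cap H$ could genuinely exceed $\mathbb{Q}$; verifying that these are the only places the hypotheses enter is the step I would scrutinize most carefully.
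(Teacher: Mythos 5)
Your proof is correct, but there is little in the paper to line it up against: the paper's entire ``proof'' of this theorem is the citation \cite[Theorem 4.4]{oddclass}, so your argument is a self-contained replacement rather than a variant of an internal one. Every step checks out: the conjugation identity $c(a\tau_1)c^{-1}(a\tau_1)^{-1}=c^{2}$ together with surjectivity of squaring on the odd-order abelian group $A_1$ does show that a normal subgroup of the generalized dihedral group $G_1$ meeting the reflection coset is all of $G_1$, whence every subfield of $H_1$ normal over $\mathbb{Q}$ other than $\mathbb{Q}$ contains $K_1$; genus theory does force each $D_i$ to be a prime discriminant, and the classes $-1$, $(-1)(2)$, $(-1)(p)$ are indeed $\mathbb{F}_2$-independent in $\mathbb{Q}^{\times}/(\mathbb{Q}^{\times})^{2}$ (each prime occurs in exactly one class, so no nonempty subset has square product); and the degree sandwich $2^{n-1}=[F:\mathbb{Q}]\le[H^{A}:\mathbb{Q}]=|G/A|\le 2^{n-1}$ plus $\phi(A)=0$ for any $\phi\colon G\to\mathbb{Z}/2$ correctly confines all quadratic subfields of $H$ to $F=K_2\cdots K_n$. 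The only internal argument the paper offers is for the weaker Corollary \ref{lind} (Hilbert class fields of $\mathbb{Q}(\sqrt{-q})$, $q\equiv 3\pmod 4$), where, following \cite{broker}, one observes that $q_k$ is the only prime ramifying in $H_{q_k}$, so the intersection is unramified everywhere over $\mathbb{Q}$ and hence equals $\mathbb{Q}$ by Minkowski. That ramification trick is much cheaper, but it genuinely cannot prove the theorem as stated: it breaks when ramified primes are shared, e.g.\ for $K_1=\mathbb{Q}(i)$ and $K_2=\mathbb{Q}(\sqrt{-2})$ (both ramify $2$), or for ring class fields of non-maximal orders with overlapping conductors. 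Your Galois-theoretic route handles exactly these cases, and your closing paragraph correctly isolates the two distinct roles of the parity hypotheses; the one cosmetic remark is that oddness of $n_i$ already implies oddness of $h(K_i)$ (the map $C(\mathcal{O})\to C(\mathcal{O}_{K})$ is surjective), so the class-number hypothesis you invoke is automatic rather than independent.
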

\begin{proof}
See \cite[Theorem 4.4]{oddclass}.
\end{proof}

\begin{lem}
The order of the class group $C(D)$ is odd exactly for discriminants $D$ of the form $D = -q$, where $q \equiv 3 \pmod{4}$.
\end{lem}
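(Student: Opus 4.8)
The plan is to determine the parity of $h(D) = |C(D)|$ through Gauss's genus theory, whose central input is the structure of the $2$-torsion of the class group. Since $C(D)$ is a finite abelian group, $h(D)$ is odd if and only if its Sylow $2$-subgroup is trivial, equivalently if and only if the $2$-torsion subgroup $C(D)[2] = \{[F] : [F]^2 = 1\}$ is trivial. Thus the entire statement reduces to computing $|C(D)[2]|$.

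First I would invoke the genus-theoretic count: for a negative fundamental discriminant $D$, the principal genus is exactly the subgroup of squares $C(D)^2$, so the genus group $C(D)/C(D)^2$ is an elementary abelian $2$-group of order $2^{t-1}$, where $t$ is the number of prime discriminant factors of $D$ (for odd $D$, equal to the number of distinct rational primes dividing $D$); see the treatment of genera in \cite[Ch.~3]{cox} and the formula for the number of genera. Because $C(D)/C(D)^2 \cong C(D)[2]$ for a finite abelian group, this yields $|C(D)[2]| = 2^{t-1}$. Consequently $h(D)$ is odd precisely when $t = 1$, i.e. precisely when $D$ has a single prime factor.

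It then remains to classify the negative fundamental discriminants with $t = 1$. If $D$ is odd, then $D \equiv 1 \pmod{4}$ and $D$ is squarefree, so $t = 1$ forces $|D|$ to be a prime $q$; writing $D = -q$, the congruence $-q \equiv 1 \pmod{4}$ is equivalent to $q \equiv 3 \pmod{4}$, giving exactly the discriminants in the statement. Conversely, each $D = -q$ with $q \equiv 3 \pmod{4}$ prime has $t = 1$ and hence odd class number, settling both directions. The only even fundamental discriminants with $t = 1$ are $D = -4$ and $D = -8$, which lie outside our setting because the good discriminants produced by Algorithm \ref{goodD} satisfy $D \equiv 5 \pmod{8}$ and are in particular odd.

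The substantive step is the genus-theory input $|C(D)[2]| = 2^{t-1}$; everything else is bookkeeping. If one prefers a self-contained argument rather than citing \cite{cox}, the main work is to show that $C(D)[2]$ is represented exactly by the reduced \emph{ambiguous} forms (those fixed by inversion, i.e. forms with $b = 0$, $a = b$, or $a = c$) and to count them, obtaining $2^{t-1}$ classes from the prime discriminant factors of $D$. Care is needed only with the power-of-two conventions when $2 \mid D$, which is precisely why the exceptional even discriminants $-4$ and $-8$ fall just outside the clean statement; restricting to odd $D$, as the construction does, removes this nuisance entirely.
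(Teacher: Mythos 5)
Your proposal is correct and takes essentially the same route as the paper: both reduce the parity of $h(D)$ to the genus-theoretic count of the $2$-torsion of $C(D)$ (the paper phrases it as $2^{t-1}-1$ elements of order $2$, you as $|C(D)[2]| = 2^{t-1}$) and conclude that oddness forces $t=1$, i.e.\ $D=-q$ with $q \equiv 3 \pmod{4}$. Your extra care is a genuine refinement rather than a different method: the restriction to \emph{fundamental} discriminants and the explicit exclusion of $D=-4$ and $D=-8$ (and implicitly of non-fundamental cases such as $D=-27$, which also has odd class number) are actual exceptions to the lemma as literally stated, which the paper's one-line proof passes over and which are only harmless because the good discriminants in the algorithm satisfy $D \equiv 5 \pmod{8}$.
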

\begin{proof}
Recall that Genus Theory states that the number of elements of order $2$ in $C(D)$ is $2^{t-1}-1$, where $t$ is the number of odd prime divisors of $D$. The result follows. See also the argument preceding Proposition \ref{parity}.
\end{proof}

\begin{cor} \label{lind}
Let $q_1,q_2, \ldots, q_n$ be a set of primes such that $q_i \equiv 3 \pmod{4}$ for $i=1,\ldots,n$. Let $H_{q_1},\ldots, H_{q_n}$ be the Hilbert class fields of the imaginary quadratic fields $\mathbb{Q}(\sqrt{-q_1}),\ldots, \mathbb{Q}(\sqrt{-q_n})$, respectively. Let $H = \prod_{k=2}^n H_{q_k}$. Then $H_{q_1} \cap H = \mathbb{Q}$.
\end{cor}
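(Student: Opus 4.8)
The plan is to recognize Corollary \ref{lind} as the special case of the preceding theorem in which every ring class field is taken to be the Hilbert class field, i.e.\ the ring class field attached to the maximal order $\mathcal{O}_{K_i}$. Concretely, I would set $K_i = \mathbb{Q}(\sqrt{-q_i})$ and $H_i = H_{q_i}$ for $i = 1, \ldots, n$, and then verify that these data satisfy every hypothesis of the theorem, so that its conclusion $H_1 \cap H = \mathbb{Q}$ becomes exactly the assertion $H_{q_1} \cap \prod_{k=2}^n H_{q_k} = \mathbb{Q}$.

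First I would check that the $K_i$ are distinct imaginary quadratic fields. Since each $q_i > 0$, the field $K_i = \mathbb{Q}(\sqrt{-q_i})$ is imaginary quadratic; and because the $q_i$ are distinct primes, the squarefree integers $-q_i$ are pairwise distinct, so the fields $\mathbb{Q}(\sqrt{-q_i})$ are pairwise distinct. Next I would pin down the class number. As $q_i \equiv 3 \pmod 4$, the integer $-q_i$ is squarefree and satisfies $-q_i \equiv 1 \pmod 4$, hence is a fundamental discriminant; thus $\mathcal{O}_{K_i}$ has discriminant $-q_i$ and ideal class group $C(\mathcal{O}_{K_i}) \cong C(-q_i)$. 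By the preceding lemma, a class group $C(D)$ has odd order precisely for $D$ equal to minus a prime congruent to $3 \pmod 4$, which is exactly the present situation, so $K_i$ has odd class number $h(-q_i)$.

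Finally I would identify the degree $[H_i : K_i]$. Because $H_{q_i}$ is the Hilbert class field of $K_i$, it is the ring class field of the maximal order, and the Artin map gives $\mathrm{Gal}(H_{q_i}/K_i) \cong C(\mathcal{O}_{K_i}) \cong C(-q_i)$; therefore $[H_{q_i}:K_i] = h(-q_i)$ is odd. With the fields distinct, their class numbers odd, and the degrees $[H_i:K_i]$ odd, all hypotheses of the theorem are met, and invoking it yields $H_{q_1} \cap H = \mathbb{Q}$. I do not expect a genuine obstacle here, since the entire field-theoretic content is absorbed into the cited theorem; the only points requiring care are the routine verifications that each $-q_i$ is a fundamental discriminant (so that $H_{q_i}$ is a bona fide ring class field of the odd degree the theorem demands) and that distinct primes yield distinct base fields.
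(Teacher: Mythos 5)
Your proposal is correct: every hypothesis check goes through ($-q_i \equiv 1 \pmod{4}$ makes $-q_i$ a fundamental discriminant, the preceding lemma gives odd class number, and $[H_{q_i}:K_i] = h(-q_i)$ since the Hilbert class field is the ring class field of the maximal order), so the cited theorem applies verbatim. This is evidently the derivation the paper's layout intends, since the lemma on odd class numbers is stated precisely to feed the theorem's hypothesis. However, the only argument the paper actually writes out for the corollary is a different and quicker one, via ramification: because $q_k \equiv 3 \pmod{4}$, the discriminant of $\mathbb{Q}(\sqrt{-q_k})$ is $-q_k$, and since $H_{q_k}/\mathbb{Q}(\sqrt{-q_k})$ is unramified, $q_k$ is the \emph{only} rational prime ramified in $H_{q_k}/\mathbb{Q}$; the primes ramified in the compositum $H$ lie among $q_2,\ldots,q_n$, so $H_{q_1} \cap H$ is unramified at every rational prime and hence equals $\mathbb{Q}$ by Minkowski's theorem. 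The ramification route buys brevity and dispenses with the odd-class-number and odd-degree hypotheses altogether (only the congruence condition is used, to keep $2$ unramified), whereas your route leans on the imported theorem and inherits its extra generality for ring class fields of non-maximal orders of odd degree. Both are complete proofs; yours is the one the corollary's placement suggests, the paper's explicit sketch is the more elementary one.
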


Corollary \ref{lind} can be proven quickly by looking at the ramified primes, as noticed in \cite{broker}. Indeed, note that $q_k$ is the only rational prime that ramified in $H_{q_k}$, and $q_k$ does not ramify in $\prod_{m \neq k} H_{q_m}$. Hence the intersection must be $\mathbb{Q}$. It follows that the class fields $H_{q_k}$ are linearly independent over $\mathbb{Q}$.

\begin{lem} \label{enough}
 Let $N$ be a rational prime and let $P(B) = \{\ell : \ell \text{ is prime and } \ell \leq B\}$. The number $S(B)$ of primes $\ell \in S(B)$ such that $4N = x^2+\ell y^2$ for some integers $x,y$ is approximately $\sqrt{B}/\log(B)$. 
\end{lem}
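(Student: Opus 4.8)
The plan is to interpret the solvability of $4N = x^2 + \ell y^2$ through class field theory, as developed in Section \ref{cm}, and then to convert the resulting density into a count via partial summation. First I would restrict attention to the primes $\ell \equiv 3 \pmod 4$, so that $D = -\ell$ is a fundamental discriminant; the primes $\ell \equiv 1 \pmod 4$ contribute a comparable term and at worst alter the final constant by a bounded factor. For such $\ell$, the equation $4N = x^2 + \ell y^2$ is solvable exactly when $N$ is represented by the principal form of discriminant $-\ell$, which, as recalled earlier, happens precisely when $N$ splits completely in the Hilbert class field $H_K$ of $K = \mathbb{Q}(\sqrt{-\ell})$.

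By the Chebotarev Density Theorem, the density of rational primes splitting completely in $H_K$ is $1/[H_K:\mathbb{Q}] = 1/(2h(-\ell))$, where $h(-\ell)$ is the class number of $C(-\ell)$ and $[H_K:K]=h(-\ell)$. Since $N$ is fixed here rather than varying, I would adopt the standard heuristic that the splitting conditions for distinct $\ell$ behave independently, so that the expected number of primes $\ell \le B$ for which $N$ splits completely is
\[
S(B) \approx \sum_{\substack{\ell \le B \\ \ell \text{ prime}}} \frac{1}{2h(-\ell)}.
\]

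The next step is to insert a class number estimate. By the Brauer--Siegel theorem (equivalently the analytic class number formula $h(-\ell) = \tfrac{\sqrt{\ell}}{\pi}L(1,\chi_{-\ell})$ together with $L(1,\chi_{-\ell}) = \ell^{o(1)}$), one has $h(-\ell) \approx \sqrt{\ell}$, so each summand is approximately $1/(2\sqrt{\ell})$. A partial summation against the Prime Number Theorem then gives
\[
\sum_{\substack{\ell \le B \\ \ell \text{ prime}}} \frac{1}{\sqrt{\ell}} \sim \int_2^B \frac{dt}{\sqrt{t}\,\log t} \sim \frac{2\sqrt{B}}{\log B},
\]
where the integral is evaluated by the substitution $u = \sqrt{t}$, which turns it into $\int \tfrac{du}{\log u}$. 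Combining the two displays yields $S(B) \approx \sqrt{B}/\log B$, as claimed.

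The main obstacle is the passage from the per-$\ell$ density $1/(2h(-\ell))$ to an actual count for a single fixed $N$: Chebotarev governs the distribution of the varying prime, so this step is genuinely heuristic and is the reason the statement asserts only an approximate equality. A secondary difficulty is the fluctuation of $L(1,\chi_{-\ell})$, which means $h(-\ell) \approx \sqrt{\ell}$ holds only up to $\ell^{o(1)}$; pinning down the leading constant exactly, rather than the order of magnitude $\sqrt{B}/\log B$, would require controlling the average of $1/L(1,\chi_{-\ell})$ over $\ell \le B$, which I would treat as absorbed into the word \emph{approximately}.
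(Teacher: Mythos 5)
Your proposal is correct and takes essentially the same route as the paper's own proof: the class-field-theoretic criterion that $4N = x^2+\ell y^2$ is solvable exactly when $N$ splits completely in the Hilbert class field of $\mathbb{Q}(\sqrt{-\ell})$, the Chebotarev density $1/(2h_{\ell})$, the estimate $h_{\ell} \approx \sqrt{\ell}$, and a sum over primes $\ell \leq B$ yielding $\sqrt{B}/\log(B)$. The only differences are minor refinements on your part: where you invoke independence across distinct $\ell$ as a heuristic, the paper grounds it in the linear disjointness of the class fields $H_{\ell}$ (Corollary \ref{lind}), and your partial summation $\sum_{\ell \leq B} \ell^{-1/2} \sim 2\sqrt{B}/\log(B)$ is more careful than the paper's crude estimate $(B/\log(B))\cdot B^{-1/2}$, with your extra factor of $2$ exactly offset by the factor $1/2$ that the paper's heuristic count of norm representations discards.
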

\begin{proof}
Chebotarev Density tells us that $P(B)$ is of size $O(B/(2\log(B))$. Recall that $4N = x^2 + \ell y^2$ for some integers $x,y$  if and only if $N$ splits completely in the Hilbert class field $H_{\ell}$ of $K_{\ell} = \mathbb{Q}(\sqrt{-\ell})$ if and only if $N$ is the norm of some principal element of $\mathcal{O}_{\ell}$, the ring of integers of $K_{\ell}$. By the Chebotarev Density Theorem, the density of rational primes that split in $H_{\ell}$ is $1/2h_{\ell}$, where $h_{\ell}$ is the class number of  $\mathcal{O}_{\ell}$, the ring of integers of $K_{\ell}$. If $N$ splits  completely in $H_{\ell}$, then there exist two elements in $\alpha, \beta \in \mathcal{O}_{\ell}$ such that their norm is equal to $N$. It follows that the expected number of elements in $\mathcal{O}_{\ell}$  for which $N$ is the norm of is $1/h_{\ell} \approx 1/\sqrt{\ell}$. Hence, since the class fields $H_{\ell}$ are linearly disjoint over $\mathbb{Q}$, the expected number of primes $\ell$ for which $N$ split completely in $H_{\ell}$ is approximately
\begin{equation}
\sum_{\ell \in P(B)} \frac{1}{\sqrt{\ell}} \approx \left(\frac{B}{\log(B)}\right)\frac{1}{\sqrt{B}} = \frac{\sqrt{B}}{\log(B)}.
\end{equation}
\end{proof}

\section{Isogeny volcanoes} \label{volcano}
To compute the Hilbert class polynomial $H_D(X) \pmod{p}$, it is best avoid the complex-analytic method when $|D| > 10^{10}$, as this is the practical upper limit due to storage size (see \cite{as3}). In general, all methods in computing $H_D(X) \pmod{p}$ has time complexity $\widetilde{O}(|D|)$, with the difference being the storage required for each method. Here the discriminants in Algorithm \ref{mainalg} is of size $O(\log^2(f_q))$, so the time complexity is at most $\widetilde{O}(\log^2(f_q))$ each time the CRT method is called. Now in the case that $C(D)$ is composite such as the case when $D = -\ell_1 \ell_2$, a root of $H_D(X) \pmod{q}$ can be obtained directly without even knowing their coefficients (see \cite{as3}). As each discriminant $D$ in Algorithm \ref{mainalg} is either of the form $D = -\ell_0$ or $D = -\ell_1 \ell_2$, the class group $C(D)$ is highly cyclic, so we may only have walk around the surface of one $\ell$-volcano. Furthermore, each such discriminant $D$ is fundamental so we do not have to worry about explicitly computing the endormorphism ring of elliptic curves as outlined in Algorithm 1.2 of \cite{as2}. It is expected that the CRT method to be faster in our scenario.

We will now provide a quick overview of the method of computing $H_D(X) \pmod{q}$ using Chinese Remainder Theorem (CRT) following \cite{as2}. By the Chebotarev Density Theorem, given a negative discriminant $D$ the set of primes
\begin{equation}
\mathcal{P}_D=\{\eta > 3 \text{ prime }: 4\eta = t_{\eta}^2 + v_{\eta}^2|D| \text{ for some } t_{\eta},v _{\eta}\in \mathbb{Z}^+\}
\end{equation}
is infinite and of density $1/2h_D$, where $h_D$ is the order of the class group $C(D)$. Suppose that we wish to compute $H_D(X) \pmod{p}$ for some (very large) prime $p$. The CRT method is to compute $H_D(X) \pmod{\eta}$ for an optimized finite set $S(D)$ of primes $\eta \in \mathcal{P}$ so that 
\begin{equation}
\prod_{\eta \in S(D)} \eta > 2B,
\end{equation}
where $B$ is an upper bound for the coefficients of $H_D(X)$. By the CRT, we can explicitly determine $H_D(X) \pmod{p}$. Now we will describe how to find $H_D(X) \pmod{\eta}$ for $\eta \in S(D)$.

Let $K$ be the imaginary quadratic field $K = \mathbb{Q}(\sqrt{D})$, and suppose $4\eta = t^2 + v^2|D|$ for some positive integers $t,v$. By Complex Multiplication, each root $r$ of $H_D(X) \pmod{\eta}$ corresponds to an isomorphism class of elliptic curves with endomorphism ring isomorphic to $\mathcal{O}_K$. Furthermore, each curve has trace $\pm t$. 

  Let Ell$_t(\mathbb{F}_{\eta})$ be the set of the j-invariants of all elliptic curves over $\mathbb{F}_{\eta}$ with trace equals to $t$. Hence, each element $j_0$ of Ell$_t(\mathbb{F}_{\eta})$ represents the class of $E/\mathbb{F}_{\eta}$ and its twists, where $E$ is a curve with j-invariant $j_0$. Let Ell$_{\mathcal{O}}(\mathbb{F}_{\eta})$ be the set of all roots of $H_D(X) \pmod{p}$, which correspond to the j-invariant of all elliptic curves over $\mathbb{F}_{\eta}$ with endomorphism ring equals to $\mathcal{O}$. We have the following set inclusions
\begin{equation}
\text{Ell}_{\mathcal{O}}(\mathbb{F}_{\eta}) \subset \text{Ell}_t(\mathbb{F}_{\eta}) \subset \mathbb{F}_{\eta}.
\end{equation}

A key observation is that the set Ell$_t(\mathbb{F}_{\eta})$ consists of isogenous curves as they have trace $t$ over the same finite field. Hence, given $j(E) \in$ Ell$_t(\mathbb{F}_{\eta})$, it is discovered that there exists an efficient method in obtaining an isogenous curve $E'$ such that $j(E') \in $ Ell$_{\mathcal{O}}(\mathbb{F}_{\eta})$, the foundation of which is based on Kohel's thesis \cite{kohel}.

Kohel's work describes a method to explicitly compute the endomorphism ring of an ordinary elliptic curve $E$ over finite field, which is isomorphic to an order $\mathcal{O}$ of some imaginary quadratic field $K$. We have the following containment:
\begin{equation}
\mathbb{Z}[\pi_E] \subset \mathcal{O} \subset \mathcal{O}_K.
\end{equation} Let $u = [\mathcal{O}_K : \mathcal{O}]$ and let $v = [\mathcal{O} : \mathbb{Z}[\pi_E]]$. The index $w = [\mathcal{O}_K : \mathbb{Z}[\pi_E]]$ is equal to $uv$. Let $\nu_{\ell}$ be the standard $\ell$-adic valuation. Kohel's work \cite{kohel} shows that computing the endomorphism ring of $E$ is equivalent to known the $\nu_{\ell}(w)$ for various primes $\ell$ (See \cite[Proposition 2]{as2}).

Let $\ell \neq \eta$ be a prime, and let $\Gamma_{\ell,t}(\mathbb{F}_{\eta})$ be the undirected graph with $V = $ Ell$_t(\mathbb{F}_{\eta})$ as vertices. There is an edge between $j(E),j(E') \in V$ exactly when $\varphi_{\ell}(j(E),j(E')) = 0$, where $\varphi_{\ell}(x,y)$ is the well-known classical modular polynomial (see \cite[ch. 11]{cox}). The equation $\varphi_{\ell}(j(E),j(E')) = 0$ is satisfied exactly when there is an isogeny of degree $\ell$ between $E$ and $E'$. These modular polynomials $\Phi_{\ell}(X,Y)$ are precomputed in Step \ref{smodular} since Algorithm \ref{crt} are to be called for $O(\log(f_q))$ many discriminants, and they will be reused later in Step \ref{selkies} and Step \ref{seigenvalue} of Algorithm \ref{mainalg}.

With at most two exceptions, the components of $\Gamma_{\ell,t}(\mathbb{F}_{\eta})$ are $\ell$-volcanoes (see \cite{as2} for definitions), but since we have excluded $j=0,1728$ in Algorithm \ref{mainalg} those exceptions do not occur. The graph resembles a volcano as can be seen in Figure \ref{volfig}. Each $\ell$-volcano can be partitioned into levels $V_0,\ldots, V_d$, where each level of the volcano represents elliptic curves with the same endomorphism ring, and the depth each $\ell$-volcano is $d = \nu_{\ell}(w)$. The bottom (floor) of the volcano contain curves with endomorphism ring generated by the Frobenius automorphism, while the top (surface) contains curves with the full ring of integers $\mathcal{O}_K$ as their endomorphism ring, where $K = \mathbb{Q}(\sqrt{D})$.

\begin{figure}[h]
\centering
\includegraphics[scale=.5]{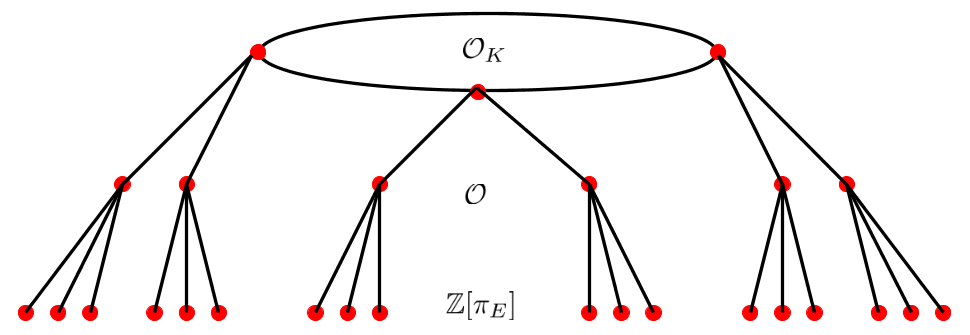}
\caption{A $3$-volcano of depth $2$, with a $3$-cycle on the surface.}
\label{volfig}
\end{figure}

The idea is to start with a random curve with trace $t$, which is actually very difficult to find. As suggested by Sutherland, it is best to use the idea of picking points from modular curves $X_1(m)$ (see Section \ref{modular}). Once we have a $j_0 \in $ Ell$_t(\mathbb{F}_{\eta})$, we go to the $\ell$-volcano that contains $j_0$ and we replace $j_0$ with the j-invariant at the level $\nu_{\ell}(w)$. If we perform this for each $\ell \mid w$, then by Proposition 2 of \cite{as2} the final $j_0 \in$  Ell$_{\mathcal{O}}(\mathbb{F}_{\eta})$, as desired.  We then choose variety of primes $\ell \neq p$ so that $\ell \nmid u$, in which case $j_0$ is on the top of each $\ell$-volcano. Finally, we use the action of $C(D)$ on $j_0$ to obtain all the other elements of Ell$_{\mathcal{O}}(\mathbb{F}_{\eta})$ by walking on the surface of these $\ell$-volcanoes.

The action of $C(D)$ on Ell$_{\mathcal{O}}(\mathbb{F}_{\eta})$ is a very interesting aspect of the algorithm. Let $\ell \neq p$ be a prime such that $\legendre{D}{\ell} \neq -1$, which we will classify as an Elkies prime in Section \ref{schoof}. There exists a prime ideal $\mathfrak{a}$ of $\mathcal{O}$ such that $(\ell) = \mathfrak{a} \overline{\mathfrak{a}}$, that is, $\ell$ is the norm of $\mathfrak{a}$. There is a \textit{prime form} $(\ell,b_{\ell},c_{\ell})$ of discriminant $D$ (see Section \ref{generators})  corresponding to the ideal $\mathfrak{a}$ such that $\ell$ is the norm of the class $[(\ell,b_{\ell},c_{\ell})]$ of $C(D)$ represented by $(\ell,b_{\ell},c_{\ell})$. The order ord$_D(\ell)$ of $[(\ell,b_{\ell},c_{\ell})]$ in $C(D)$ is equals to the number of elements of Ell$_{\mathcal{O}}(\mathbb{F}_{\eta})$ that lie on the surface of the $\ell$-volcano (see \cite[Proposition 3]{as2}).

Suppose $j_0 \in$ Ell$_{\mathcal{O}}(\mathbb{F}_{\eta})$ lies on the surface $V_0$ of an $\ell$-volcano. One walks a path of length $d$ to obtain a list of elements $[j_0,j_1,\ldots,j_d]$ of Ell$_t(\mathbb{F}_{\eta})$. If the level of $j_d \not \in V_d$, then $j_2 \in $  Ell$_{\mathcal{O}}(\mathbb{F}_{\eta})$, else we try another path of length $d$. We perform this walking on each subsequent found $j \in $  Ell$_{\mathcal{O}}(\mathbb{F}_{\eta})$ until the number of elements found is equal to ord$_D(\ell)$. In general, to obtain elements on the surface, we use primes $\ell$ so that the depth $d$ is $0$, that is, $\ell \nmid v$. We use primes $\ell \mid v$ only when it is easier to find roots of $\Phi_{\ell}(X,j(E))$. Hence, we observe that most of the orbit computations are done on $\ell$-volcanoes of depth $0$.

Note that when the depth $d =0$, walking around the surface $V_0$ is relatively straightforward from Proposition 6.2 of \cite{schoof} regarding Elkies prime. If $\legendre{D}{\ell}=0$, then ord$_D(\ell) = 2$. The polynomial $\Phi_{\ell}(X,j_0)$ has exactly one other root $j_1$ that lies on the surface $V_0$. Hence, the surface $V_0$ in this case is just a line segment. Now when $\legendre{D}{\ell} = 1$, the polynomial $\Phi_{\ell}(X,j_{i-1})/(X-j_i)$ has exactly one root $j_{i+1}$. Of course, here the surface $V_0$ is a cycle of length ord$_{D}(\ell)$.

There are many considerations must be taken for this method to be efficient. The primes $\eta$ in $S(D)$ must be chosen carefully so that the density of curves with $t$ is high, that is, we choose primes $\eta$ that enlarge Ell$_{\mathcal{O}}(\mathbb{F}_{\eta})$ while keeping the size of Ell$_{\mathcal{O}}(\mathbb{F}_{\eta})$ the same. We also want primes $\eta$ so that the index $v=[\mathcal{O}:\mathbb{Z}[\pi_E]]$ has small prime divisors as the depth $d$ of each $\ell$-volcano depends entirely on $v$ when $D$ is fundamental. The primes $\ell$ must be chosen so that movements on the $\ell$-volcanoes are easiest. The action of $C(D)$ on Ell$_{\mathcal{O}}(\mathbb{F}_{\eta})$ is easier to compute if we use the prime forms of $C(D)$ (see Section \ref{generators}). Finally, the computations for the CRT must be updated continuously once $H_D(X) \pmod{\eta}$ is obtained for each $\eta \in S(D)$.

\begin{alg} Let $D$ be a fundamental discriminant of size $O(\log^2(f_q))$ from Step \ref{sdisc} from Algorithm \ref{mainalg}. This algorithm finds $H_D(X) \pmod{p}$ using Chinese Remainder Theorem. \label{crt}
\begin{enumerate} [nolistsep]
\item Let $\mathcal{P}_D = \{\eta > 3 \text{ prime} : 4\eta=t_\eta^2+v_\eta^2|D|$ \text{for some} $t_\eta,v_\eta \in \mathbb{Z}\}$.
\item Choose an optimized list $S(D)$ of primes from $\mathcal{P}_D$.
\item Let $k = 0$.
\item Let $\eta = S[k]$.
\item Find a curve $E$ with $j(E) \in $ Ell$_t(\mathbb{F}_\eta)$ following Section \ref{modular}.
\item Use the precomputed modular polynomials $\Phi_{\ell}(X,Y)$ to find an isogenous $E'$ such that $j_0=j(E') \in $ Ell$_{\mathcal{O}}(\mathbb{F}_\eta)$.
\item Compute the prime forms for discriminant $D$ using Algorithm \ref{generators}.
\item Use the precomputed modular polynomials $\Phi_{\ell}(X,Y)$ and prime forms to compute the orbit of the group action of $C(D)$ on $j_0$ to obtain all the elements of Ell$_{\mathcal{O}}(\mathbb{F}_\eta)$.
\item Compute $H_D(X) \pmod{\eta}$ by expanding
\begin{equation}
H_D(X) \pmod{p} = \prod_{j \in \text{Ell}_{\mathcal{O}}(\mathbb{F}_\eta)}  (X - j) \pmod{\eta}.
\end{equation}
\item Increase $k$ by $1$ and return to Step 4 if $k < |C(D)|$, else go to the next step.
\item Use CRT to compute $H_D(X) \pmod{p}$.
\end{enumerate}
\end{alg}

Note that precomputing $\Phi_{\ell}(X,Y)$ is negligible as each prime $\ell < 6\log^2(4\log^2(f_q))$, and the time complexity for computing each $\Phi_{\ell}(X,Y)$ is $\widetilde{O}(\log^3(\ell))$. 

\section{Sampling from modular curve} \label{modular}
Recall that if we wish to construct an elliptic curve $E/\mathbb{F}_p$ of order $N$, then we need to find one with trace $t = p+1-N$. The naive method in finding a curve $E$ of trace $t$ is to look for $E: Y^2=X^3+aX-a$, where $1 \leq a \leq p-1$, such that 
\begin{equation} \label{trace}
(p+1)(1,1) = \pm t(1,1),
\end{equation}
as the point $(1,1)$ on $E$ (though $E$ may not have trace $t$). Then we compute the order of $E$ and find its twist if necessary. However, this naive method requires the computation of the order of around $2\sqrt{p}$ curves. To accelerate the search we need to reduce our sample size, and in \cite{as} Sutherland does this by searching for points on on the modular curve $X_1(d)$ for various $d \mid N$.

Recall that by Mazur's theorem (see \cite[Theorem 7.5]{silverman1}), if $E$ is an elliptic curve over $\mathbb{Q}$, then the order of a non-trivial torsion point $P$ of $E$ is a number in the set $\mathcal{T}=\{2,3,4,5,6,7,8,9,10,12\}$. Let $p$ be an odd prime that does not divides the discriminant of $E$, that is, $p$ is a prime of good reduction, then the reduction map
\begin{equation}
E_{\text{torsion}} \rightarrow \overline{E}(\mathbb{F}_p)
\end{equation}
is injective (see page 123 of \cite{silverman3}). This provides a way to obtain a curve over $\mathbb{F}_p$ with order divisible by $d \in \mathcal{T}$. However, the possible orders of $\overline{E}$ is very limited if $E$ is defined over $\mathbb{Q}$. We could try to find a curve $E$ over some finite extension $K/\mathbb{Q}$; however, its reduction may not have coefficients in $\mathbb{F}_p$. For example, the reduction of $E/\mathbb{Q}(d)$ has coefficients in $\mathbb{F}_p$ exactly when $d \pmod{p}$ is a square. As Sutherland suggests in \cite{as}, we should look for points on the curve $Y_1(d)/\mathbb{F}_p$, where $Y_1(d)$ is the affine subcurve of $X_1(d)$.

The modular curve $X_1(m)$ classifies pairs $(E,P)$ of elliptic curves $E$ with a fixed point $P \in E(\mathbb{C})$ of order $m$ up to isomorphism over $\mathbb{C}$ (see \cite{alr}). To find an elliptic curve of order $m$, we narrow our search to within $Y_1(d)$ for some $d \mid m$. We want $d$ to be reasonably small due to the cost in finding such points. Furthermore, if $d_1 < d_2$ are small divisors of $m$, we generate several curves from $X_1(d_1)$ and test them for $d_2$-torsion.

To find a curve $E/\mathbb{F}_p$ of trace $t$, we search for points on $Y_1(d)/\mathbb{F}_p$, where $d \mid p+1-t$ or $d \mid p+1+t$, preferably both. From the given point, we can find its Weierstrass equation, and if the curve is singular, we find another point. In \cite{as}, Sutherland has optimized the search for points on $Y_1(d)/\mathbb{F}_p$ by computing the \textit{raw form} $F_d(r,s)$ of $X_1(d)$. Sutherland and Hoeij has computed $F_d(r,s)$ for $d$ up to $100$. It is recommended in \cite{as} to use such forms for only $d$ up to $40$ due to the cost of finding points of $F_d(r,s) =0$.

In theory, one could directly apply this searching method to construct elliptic curves of prime order $N$ over some prime field $\mathbb{F}_p$, provided that $|t| = |p+1-N| \leq 2\sqrt{p}$. We will describe this algorithm below; however, it is impractical for large prime $p$.

To find an elliptic curve over $\mathbb{Z}/p\mathbb{Z}$ of trace $t=p+1-N$, we find a small divisor $d$ of $N_0=p+1+t$ and search within $Y_1(d)/(\mathbb{Z}/p\mathbb{Z})$. Once we find a curve of trace $t$, we compute its twist. Of course, this method fails when $N_0$ is not smooth.

For each of the probable prime $p$ found from Step \ref{ssplit} and verified in Step \ref{srm}, we can apply the following algorithm to construct an elliptic curve $E/(\mathbb{Z}/p\mathbb{Z})$ of order $f_q$. Note by construction $|t|=|p+1-f_q| \leq 2\sqrt{p}$ is automatically satisfied.

\begin{alg} \label{modularalg} Algorithm to find an elliptic curve $E/(\mathbb{Z}/p\mathbb{Z})$ of trace $t=p+1-f_q$.
\begin{enumerate}[nolistsep]
\item Compute $N_0 = 2(p+1)-f_q$.
\item Find a small divisor $d$ of $N_0$.
\item \label{findpoints} Search for points on $Y_1(d)/(\mathbb{Z}/p\mathbb{Z})$ using the optimized planar equation $F_d(r,s)=0 \pmod{p}$.
\item Find a Weierstrass equation for $E/(\mathbb{Z}/p\mathbb{Z})$.
\item If $E$ is singular, return to Step \ref{findpoints} and find another point, else go to the next step.
\item Find a twist of $E$ with order $f_q$ if necessary.
\end{enumerate}
\end{alg}

\section{Prime forms of class group} \label{classgroup}
Recall that for a negative discriminant $D$, the elements of $C(D)$ consists of equivalence classes of binary quadratic forms $q(X,Y) = aX^2+bXY+cY^2$ such that $D = b^2-4ac$. Moreover, each class is represented by exactly one reduced form. We wish to find generators for $C(D)$ where $a$ is a rational prime.

Let $\ell$ be a rational prime. Note that if $D = b^2 - 4\ell c$, then $D$ is a quadratic residue modulo $4\ell$. From this observation, we can easily obtain quadratic forms of $C(D)$ with $a$ prime (see \cite{bv} and page 251 of \cite{cohen1}). Indeed, assume $D$ is a quadratic residue modulo $\ell$, and write $b_{\ell}$ for its square root modulo $\ell$. By taking $\ell-b_{\ell}$ if necessary, we may assume that $b_{\ell}$ is a square root of $D$ modulo $4\ell$. Then the form $(\ell,b_{\ell},(b_{\ell}^2-D)/4\ell)$ has discriminant $D$. We call such a form to be a \textit{prime form} of $C(D)$. In fact, every form $F$ of $C(D)$ is a product of such forms:
\begin{lem} (Lemma 5.5.1 of \cite{cohen1}) Let $(a,b,c)$ be a primitive positive definite quadratic form of discriminant $D < 0$, and $a = \prod_{\ell} {\ell}^{\nu_{\ell}}$ be the prime factorization of $a$. Then we have up to equivalence:
\begin{equation}
(a,b,c) = \prod_{\ell} F_{\ell}^{\epsilon_{\ell} \nu_{\ell}},
\end{equation}
where $F_{\ell}$ is a prime form corresponding to $\ell$, and $\epsilon_{\ell} = \pm 1$ is defined by the congruence
\begin{equation}
b \equiv \epsilon_{\ell} b_{\ell} \pmod{2 \ell}.
\end{equation}
\end{lem}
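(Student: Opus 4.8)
The plan is to translate the statement into the language of ideals using the isomorphism $C(D) \cong C(\mathcal{O})$ from Equation \ref{cgcorr}, and then to read off the factorization from the prime ideal factorization of the ideal attached to $(a,b,c)$. Under that correspondence the form $(a,b,c)$ goes to the class of the ideal $\mathfrak{a} = [a, (-b+\sqrt{D})/2]$, whose norm is $a$, while each prime form $F_\ell = (\ell, b_\ell, (b_\ell^2-D)/(4\ell))$ goes to a prime ideal $\mathfrak{p}_\ell = [\ell, (-b_\ell + \sqrt{D})/2]$ lying above $\ell$, with $F_\ell^{-1}$ corresponding to the conjugate $\overline{\mathfrak{p}_\ell}$ (equivalently the form $(\ell, -b_\ell, (b_\ell^2-D)/(4\ell))$). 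Since the correspondence is a group isomorphism, it suffices to establish the identity $[\mathfrak{a}] = \prod_\ell [\mathfrak{p}_\ell]^{\epsilon_\ell \nu_\ell}$ in $C(\mathcal{O})$.

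First I would show that primitivity of the form, $\gcd(a,b,c)=1$, is equivalent to primitivity of $\mathfrak{a}$ as an ideal, i.e. that no rational prime divides $\mathfrak{a}$. Granting this, for each prime $\ell \mid a$ the prime $\ell$ cannot be inert, since it divides the norm $a$, and primitivity forbids $\mathfrak{p}_\ell$ and $\overline{\mathfrak{p}_\ell}$ from both dividing $\mathfrak{a}$, as their product is the principal ideal $(\ell)$. Hence exactly one prime above $\ell$, say $\mathfrak{q}_\ell \in \{\mathfrak{p}_\ell, \overline{\mathfrak{p}_\ell}\}$, divides $\mathfrak{a}$, and comparing norms forces $\mathfrak{q}_\ell^{\nu_\ell}$ to exactly divide $\mathfrak{a}$. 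Collecting over all $\ell \mid a$ yields the integral factorization $\mathfrak{a} = \prod_\ell \mathfrak{q}_\ell^{\nu_\ell}$, and passing to ideal classes gives the product, with $\epsilon_\ell = +1$ when $\mathfrak{q}_\ell = \mathfrak{p}_\ell$ and $\epsilon_\ell = -1$ otherwise.

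It then remains to match the sign $\epsilon_\ell$ with the congruence $b \equiv \epsilon_\ell b_\ell \pmod{2\ell}$, and the cleanest route is to run Dirichlet composition directly. When the leading coefficients are pairwise coprime, the product $\prod_\ell F_\ell^{\epsilon_\ell\nu_\ell}$ is represented by a form $(a, B, C)$ whose leading coefficient is $\prod_\ell \ell^{\nu_\ell} = a$ and whose middle coefficient is pinned down by the simultaneous congruences $B \equiv \epsilon_\ell b_\ell \pmod{2\ell^{\nu_\ell}}$, so that $B$ is determined modulo $2a$ by the Chinese Remainder Theorem. Since $(a,B,C)$ and $(a,b,c)$ share the same leading coefficient and discriminant, and $B \equiv b \pmod{2a}$ by the defining congruences, the two forms coincide, as $C = (B^2-D)/(4a)$ is then forced to equal $c$. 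Choosing $\epsilon_\ell$ so that $\epsilon_\ell b_\ell \equiv b \pmod{2\ell}$ therefore reproduces $(a,b,c)$ exactly, which is the assertion.

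The main obstacle will be the bookkeeping in this last step: verifying that $F_\ell^{\nu_\ell}$ really has leading coefficient $\ell^{\nu_\ell}$, so that no collapse $\mathfrak{p}_\ell \overline{\mathfrak{p}_\ell} = (\ell)$ occurs inside the power, which again rests on primitivity; and tracking the factor of $2$ together with the ramified primes $\ell \mid D$ (where $\mathfrak{p}_\ell = \overline{\mathfrak{p}_\ell}$ and $F_\ell$ is its own inverse) through the composition, so that the local data $b \pmod{2\ell^{\nu_\ell}}$ glue correctly to $b \pmod{2a}$. Once the sign at each $\ell$ is matched to $b \pmod{2\ell}$, the CRT patching and the uniqueness of $(a,b,c)$ given its leading coefficient and discriminant close the argument.
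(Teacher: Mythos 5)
The paper never proves this lemma: it is quoted verbatim from Cohen with a citation, so there is no in-paper argument to compare against, and your proposal supplies a proof rather than parallels one. Your route (the form--ideal dictionary of Equation \ref{cgcorr} plus Dirichlet composition) is the standard textbook argument, and it is sound in the setting where the paper actually invokes the lemma, namely fundamental discriminants. One small correction to your first step: the ideal $[a,(-b+\sqrt{D})/2]$ is automatically primitive as an ideal regardless of $\gcd(a,b,c)$, since writing $\mathcal{O}=\mathbb{Z}+\mathbb{Z}\omega$ the second generator has unit coefficient on $\omega$; what primitivity of the form actually buys is properness (invertibility) of the ideal, which is what you need in order to speak of its class and to invoke the group isomorphism at all.

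There are two genuine issues. First, your norm-comparison step silently assumes unique factorization into prime ideals, which fails in a non-maximal order for ideals not coprime to the conductor $f$, where $D=f^2d_K$; since the lemma is stated for arbitrary $D<0$, the primes $\ell\mid\gcd(a,f)$ are a real gap. Concretely, for $D=-27$ the form $(9,3,1)$ is primitive with $a=3^2$, but the would-be prime form $F_3=(3,3,3)$ is imprimitive, so no factorization into prime forms at $3$ exists: the statement must be read with the implicit hypothesis that each $F_\ell$ exists as a primitive form, equivalently $\ell\nmid f$, and your proof should either say this or restrict to ideals prime to the conductor, where factorization behaves as you claim. Second, the sign-matching is overstated: the defining congruences determine $B$ only modulo $2\ell$ for each $\ell$, hence modulo $2\prod_\ell \ell$, not modulo $2a$ as you assert. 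To get $B\equiv b\pmod{2\ell^{\nu_\ell}}$ you must also use that $b$ and $B$ are both square roots of $D$ modulo $4\ell^{\nu_\ell}$ (both forms have leading coefficient $a$ and discriminant $D$) and Hensel-lift: for odd unramified $\ell$, square roots of $D$ modulo $4\ell^{\nu}$ that agree modulo $2\ell$ agree modulo $2\ell^{\nu}$. Combined with the ramified bound $\nu_\ell\leq 1$ (which you rightly flag, and which follows since $\ell\mid b$ and $\ell^2\mid a$ would force $\ell^2\mid D$), the translation $X\mapsto X+kY$ then identifies $(a,B,C)$ with $(a,b,c)$ up to equivalence and the argument closes.
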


  Assuming the Extended Riemann Hypothesis (ERH), restricting $\ell$ to be primes $\ell \leq 6\log^2(|D|)$ yields a sequence of generators for $C(D)$ (see \cite{bach}). It is suggested in \cite{cohen1} that in practice it is better to search for primes $\ell \leq B(D)$, where 
\begin{equation} \label{bound}
B(D) = \max\left(6(\log(|D|))^2,L(|D|)^{1/\sqrt{8}}\right)
\end{equation}
and
\begin{equation}
L(x) = e^{\sqrt{\log(x) \log(\log(x))}}.
\end{equation}

\begin{alg} \label{generators} Algorithm to find prime forms of $C(D)$
\begin{enumerate}[nolistsep]
\item Let $R$ and $\mathcal{F}$ be empty sets.
\item For each prime odd prime $\ell \leq B(D)$ such that $\legendre{D}{\ell} = 1$, find a square root $b_{\ell}$ of $D$ modulo $\ell$. Take $b_{\ell} = \ell-b_{\ell}$ if necessary, we may assume that $b_{\ell}^2 \equiv D \pmod{4\ell}$, and we store the pair $(b_{\ell},\ell)$ into the set $R$. 
\item For each pair $(b_{\ell},\ell) \in R$, store the form $(a_{\ell},b_{\ell},c_{\ell})$ into $\mathcal{F}$, where $c_{\ell} = (b_{\ell}^2-D)/(4\ell)$.
\item Return $\mathcal{F}$.
\end{enumerate}
\end{alg}

As mentioned in Section \ref{volcano}, the prime forms for $C(D)$ are used in \cite{as2} to create a polycyclic presentation of $C(D)$. These forms provide efficient walks on isogeny volcanoes, and the points on the surface of the volcanoes provide roots of $H_D(X) \pmod{q}$ for some prime $q$. As suggested by Sutherland in \cite{as2}, to make sure these prime forms generate $C(D)$ unconditionally, one computes the order of $C(D)$ and add more forms if necessary; however, this is not practical for large discriminants such as the ones in our scenario. 

\section{Computing square root and Cornacchia's algorithm} \label{roots}
There are various algorithms to find a square root of an integer $a$ modulo prime $p$, assuming that there exists one. In the easiest case $p \equiv 3 \pmod{4}$, the square root of $a$ modulo $p$ is given by $x = a^{(p+1)/4)} \pmod{p}$. The remaining cases are $p \equiv 5 \pmod{8}$ and $p \equiv 1 \pmod{8}$. For the case $p \equiv 1 \pmod{8}$, we will use Tonelli-Shanks algorithm, which is useful when computing multiple square roots. Finally, the case $p \equiv 5 \pmod{8}$ is relatively straightforward. If $p \equiv 5 \pmod{8}$ and $a$ is a square modulo $p$, then
\begin{equation}
\sqrt{a} \pmod{p} = \left\{
        \begin{array}{rl}
            a^{(p+3)/8} \pmod{p}, &  \text{ if } a^{(p-1)/4} \equiv 1 \pmod{p} \\
            2a(4a)^{(p-5)/8} \pmod{p}, & \text{ if } a^{(p-1)/4} \equiv - 1 \pmod{8}.
        \end{array}
    \right.
\end{equation}

We will now described Tonelli-Shanks algorithm, following \cite{cohen1}. Suppose we wish to compute the square root of $a$ modulo $p$. Here, we are assuming that $\legendre{a}{p} = 1$. Write
\begin{equation}
p - 1 = 2^e m,
\end{equation}
where $m$ is odd. The multiplicative group $\left(\mathbb{Z}/p\mathbb{Z}\right)^*$ is cyclic of order $p-1$, and so its $2$-Sylow subgroup $G$ is cyclic of order $2^e$. Find an integer $n$ such that $\legendre{n}{p}=-1$, and compute $g=n^m$, which clearly belong to $G$. If $g^{2^{e-1}} = 1 \pmod{p}$, then $p$ is composite, else $g$ generates the 2-Sylow subgroup $G$. Similarly, the element $a^m \in G$, so $a^m g^k = 1 \pmod{p}$ for some integer $k$. It follows that a square root of $a \pmod{p}$ is given by
\begin{equation}
\sqrt{a} \pmod{p} = a^{(m+1)/2}g^{k/2} \pmod{p}.
\end{equation}

It would be useful to compute the square roots of all the elements in the 2-Sylow if $e$ is small and multiple square roots are to be computed. This is especially useful for Algorithm \ref{mainalg}. Furthermore, one amazing fact is that for each of the probable Fibonacci prime $f_q$ from \cite{online}, the $2$-Sylow subgroup of $\left(\mathbb{Z}/f_q\mathbb{Z}\right)^{\times}$ is very small (see Observation \ref{2val}). Hence, computing square roots modulo $f_q$ is relatively simple in all cases.

Below is a variant of the Tonelli-Shanks algorithm that is tailored for our purpose. We will break the Tonelli-Shanks algorithm into two parts to suit our purpose. If either part fails, then $f_q$ is not a prime.

\begin{alg} \label{pre} (Precomputations) This algorithm compute a square root of each element of the 2-Sylow subgroup $G$ of $\left(\mathbb{Z}/f_q\mathbb{Z}\right)^{\times}$. Suppose we are given a quadratic non-residue $n$ modulo $f_q$.
\begin{enumerate}[nolistsep]
\item Let $R(2,f_q)$ be an empty list.
\item Factor $f_q-1$ as $f_q-1 = 2^e m$, where $m$ is odd.
\item Compute $g=n^d \pmod{f_q}$.
\item Let $G$ be the subgroup generated by $g$.
\item Compute a square root $r_i$ of each element $g^i$ of $G$ and append the ordered pair $(g^i, r_i)$ to the list $R(2,f_q)$.
\end{enumerate}
\end{alg}

Here we are applying the standard Tonelli-Shanks algorithm repeatedly in the final step of Algorithm \ref{pre}. Of course, for some of the elements we do not have to. Now we will describe Tonelli-Shanks algorithm if we are given $R(2,f_q)$.

\begin{alg} (Variant of Tonelli-Shanks algorithm) \label{ts} Given $R(2,f_q)$ and $\legendre{a}{f_q} = 1$, this algorithm computes a root of $a \pmod{f_q}$.
\begin{enumerate}[nolistsep]
\item Compute $x = a^{(m+1)/2} \pmod{f_q}$ and $y = a^m \pmod{f_q}$.
\item Look up $R(2,f_q)$ for a square root of $z$ of $y$.
\item Then $\sqrt{a} = \pm x/z \pmod{f_q}$.
\end{enumerate}
\end{alg}

It is noted in \cite{am} that if we fail to find a square root modulo $p$ using the above algorithms, then $p$ is composite. It is unlikely for a composite number to pass Algorithm \ref{ts}. See \cite{will2} for a combination of square roots with primality tests.

An efficient algorithm to obtain a square root is useful in solving a variety of Diophantine equations, namely the equation $X^2+dY^2 = m$. Cornacchia's algorithm (see Section 1.5.2 of \cite{cohen1} and \cite{schoof}) can provide the unique solution of positive integers to the equation $X^2+dY^2=m$ if there exists one. The algorithm is essentially an application of the Euclidean algorithm. We will provide an overview of the algorithm following \cite{schoof}, where its proof by Lenstra can also be seen. In \cite{schoof}, the Cornacchia's algorithm is used to compute the order of an elliptic curve $E/\mathbb{F}_p$ if its endomorphism ring is known, and he also reversed this process to compute square roots modulo $p$ (see \cite{schoof2}).

If $\legendre{-d}{m}=-1$, then clearly the equation $X^2+dY^2=m$ does not have a solution. Assume $\legendre{-d}{m}=1$, and let $r_0$ be a root of $-d \pmod{m}$. Replacing $r_0$ with $m - r_0$ if necessary, we may assume that $r_0 \leq m/2$. Let $r_{-1}=m$. Use the Euclidean algorithm to find a sequence of non-negative integers $r_1,r_2,\ldots,r_k$ such that $r_k < \sqrt{m}$ and
\begin{equation} 
r_j \equiv r_{j-2} \pmod{r_{j-1}}, \hspace{0.2in} \text{ for } 1 \leq j \leq k.
\end{equation}
Then a unique solution of positive integers exists exactly when the real number
\begin{equation}
\sqrt{\frac{m-r_k^2}{d}}
\end{equation}
is an integer, and the solution is given explicitly by
\begin{equation} 
(x,y) = \left(r_k,\sqrt{\frac{m-r_k^2}{d}}\right).
\end{equation}

\begin{alg} (Cornacchia's Algorithm) \label{corn} Algorithm to find a solution to $x^2+dy^2=m$ if there exists one.
\begin{enumerate}[nolistsep]
\item Find a square root $r_0$ of $-d \pmod{m}$.
\item Replace $r_0$ with $m - r_0$ if necessary, we may assume $r_0 \leq m/2$.
\item Let $r_{-1} = m$.
\item Use the Euclidean algorithm to find a sequence of non-negative integers $r_1,r_2, \ldots, r_k$ satisfying  $r_k < \sqrt{m}$ and $r_j \equiv r_{j-2} \pmod{r_{j-1}}$, for $1 \leq j \leq k$.
\item Compute $s=(m-r_k^2)/d$.
\item A solution exists exactly when $s$ is an integer squared, and the unique solution of positive integers is given by $(x,y)=(r_k,\sqrt{s})$.
\end{enumerate}
\end{alg}

Note that each square root computation $\sqrt{a} \pmod{f_q}$ is just applications of a small number of exponentiations, which has time complexity $\widetilde{O}(\log^2(f_q))$. The Cornacchia's Algorithm is just an application of Euclidean Algorithm, which has time complexity $\widetilde{O}(\log(f_q))$. Hence, the time complexity for all algorithms in this section has time complexity of at most $\widetilde{O}(\log^2(f_q))$.

\section{Schoof's algorithm} \label{schoof}
Let $E/\mathbb{F}_p$ be an ordinary elliptic curve over $\mathbb{F}_p$ of the form $Y^2 -f(X) = 0$ for some cubic polynomial $f(X)$, whose j-invariant is neither $0$ or $1728$. Recall that the order $N$ and trace $t$ of $E$ are related by $N = p+1-t$. Hence, computing the order $E$ is equivalent to computing the trace of $E$. Schoof's algorithm \cite{schoof2} can compute $t$ in time complexity $O(\log^8(p)$. Improvements by Atkin and Elkies \cite{schoof} have lead to the SEA algorithm, which has time complexity $\widetilde{O}(\log^4(p))$.

Schoof's idea is to compute $t \pmod{\ell}$ for finitely manly primes $\ell \neq p$ and compute $t$ using the Chinese Remainder Theorem. Since $|t| \leq 2 \sqrt{p}$, we want to choose the set $S(E)$ of primes $\ell < p$ so that
\begin{equation}
\prod_{\ell \in S(E)} \ell > 4 \sqrt{p},
\end{equation}
in order to be able to determine $t$ uniquely.

To compute $t \pmod{\ell}$, we use the characteristic polynomial 
\begin{equation} \label{char}
X^2-tX + p = 0
\end{equation}
of the Frobenius automorphism $\pi_E$ of $E$. Let $P = (x,y)$ be a point of the $\ell$-torsion subgroup $E[\ell]$ of $E$. From Equation \ref{char}, we have
\begin{equation}
(\pi_E^2-t\pi_E+p)P = 0,
\end{equation}
and explicitly in terms of $x$ and $y$, we have
\begin{equation}
(x^{p^2},y^{p^2}) - t(x^p,y^p) + p(x,y) = 0,
\end{equation}
which implies
\begin{equation} \label{schoofidea}
(x^{p^2},y^{p^2}) + p_{\ell}(x,y) = t_{\ell}(x^p,y^p),
\end{equation}
where $t_{\ell} \equiv t \pmod{\ell}$ and $p_{\ell} \equiv p \pmod{p}$. Here, the $0$ acts as both the point at infinity and the morphism induced by $0$. Its meaning can be understood by context. 

Schoof's idea to find $t \pmod{\ell}$ is to plug $t_{\ell} = 1,\ldots, \ell-1$ into Equation \ref{schoofidea} until the equation is satisfied. In practice, we actually only have to  look at $1 \leq t_{\ell} \leq (\ell-1)/2$ by looking at the second coordinate. However, instead of working with one $\ell$-torsion point at a time, we work with the entire $\ell$-torsion group $E[\ell]$ at once, that is, we perform computations in the ring
\begin{equation} \label{schoofeq}
R_{\ell} = \mathbb{F}_p[X,Y]/(\varphi_{\ell}(X),Y^2-f(X)),
\end{equation}
where $\varphi_{\ell}(X)$ is the $\ell$-division polynomial of $E$ (see \cite[pp. 105]{silverman1}). The degree of $\varphi_{\ell}(X)$ is of size $O(\ell^2)$, and each root of $\varphi_{\ell}(X)$ corresponds to the $X$-coordinate of some point in $E[\ell]$.

If we choose a prime $\ell$ of size $O(\log(f_q))$ such that $\legendre{t^2-4f_q}{\ell} \neq -1$, then we can compute $t_{\ell} \pmod{\ell}$ much quicker. The prime $\ell$ is called an Elkies prime. The reason for this drastic reduction in time complexity is due to the fact that one can use a polynomial $F_{\ell}(X)$ of degree of size $O(\ell)$ in Equation \ref{schoofeq}, instead of the polynomial $\varphi_{\ell}(X)$ with degree of size $O(\ell^2)$, though the trace $ t_{\ell} \pmod{\ell}$ is computed differently than as above. 

Note that since the trace $t$ is unknown, we can not immediately determine whether an integer is an Elkies prime. To determine whether $\ell$ is an Elkies prime, we such that fact that $\Phi_{\ell}(X,j(E))$ has a root exactly when $\ell$ is an Elkies prime (see \cite[Proposition 6.2]{schoof}). Here $\Phi_{\ell}(X,Y)$ is the classical modular polynomial that parametrizes pairs of $\ell$-isogenous elliptic curves over $\mathbb{C}$ (see \cite[ch. 11]{cox}). This interpretation carries over to finite fields of characteristic prime to $\ell$. The work \cite{as4} on the distribution of Atkin and Elkies primes shows that one should be able to find an Elkies prime quickly.

Since the polynomial $X^p-X$ splits completely over $\mathbb{F}_p$, the polynomial $\Phi_{\ell}(X,j(E)) \pmod{p}$ has a root $j_0$ in $\mathbb{F}_p$ exactly when
\begin{equation}
\gcd(\Phi_{\ell}(X,j(E)),X^p-X) > 1.
\end{equation}
As mentioned above, there exists an isogenous elliptic curve $E'$ such that $j(E') = j_0$. By Proposition 6.1 of \cite{schoof}, there exists a 1-dimensional subspace $C$ of $E[\ell]$ such that $E'$ and $E/C$ are $\mathbb{F}_p^{alg}$-isomorphic. Furthermore, the subspace $C$ is an eigenspace of $\pi_E$ for some eigenvalue $\lambda$, which corresponds to a root of $X^2-tX+p = 0$. Hence, if the eigenvalue $\lambda$ is known, the value $t_{\ell} = t \pmod{\ell}$ is easily computed.

Corresponding to the eigenspace $C$ of $\pi_E$, there exists a polynomial $F_{\ell}(X)$ of degree $(\ell-1)/2$, whose roots correspond to the distinct $X$-coordinates of the points in $C$. See \cite{schoof} for explicit calculation of the polynomial $F_{\ell}(X)$. Hence, to compute $\lambda$, we check which of the relations
\begin{equation}
\pi_E(X,Y) = (X^p,Y^p) = \lambda' \cdot (X,Y) \hspace{.2in} \lambda' = 1, \ldots, \ell-1 \pmod{F_{\ell}(X)}.
\end{equation} is satisfied.
The polynomials here that we are working with have degrees of size $O(\ell)$ instead of $O(\ell^2)$ as in Schoof's algorithm. The time complexity in computing $t_{\ell} \pmod{\ell}$ is drastically reduced.

Schoof's algorithm and the observations above provide us a way to remove unwanted curves from the list $\mathcal{E}_q$ that we obtain at the end of Step \ref{scheckorder} of Algorithm \ref{mainalg}. Each element of $(E,p,D)$ is an ordinary curve with possible order $f_q$ over the ring $\mathbb{Z}/p\mathbb{Z}$. The j-invariant of $E$ is not equal to $0$ or $1728$ and is a root of $H_D(X) \pmod{p}$ by construction. We remove $(E,p,D)$ from the list $\mathcal{E}_q$ if $\Phi_{\ell}(X,j(E))$ does not have a root for prime $\ell$ such that $\legendre{D}{\ell} \neq -1$. If $(E,p,D)$ does pass this test, we verify that a root $\lambda$ of $X^2-tX+p = 0$ is an eigenvalue of the Frobenius map $\pi_E$.

\begin{alg} (Elkies Primes Verification) \label{elkies} Let $\mathcal{E}_q$ be the list of curves obtained from Step \ref{scheckorder} of Algorithm \ref{mainalg}. Let $N$ be the cardinality of $\mathcal{E}_q$. 
\begin{enumerate}[nolistsep]
\item Let $k=0$ and let $E(D)$ be an empty list.
\item If $k = N$, then go to the final step, else let $(E,p,D)= \mathcal{E}_q[k]$.
\item Let $\ell = 2$.
\item Compute $r=\legendre{D}{\ell}$.
\item If $r \neq -1$, append $\ell$ to $E(D)$.
\item Compute $d=\gcd(\Phi_{\ell}(X,j(E)),X^p-X)$.
\item If $r \neq -1$ and $d = 1$ or $r = -1$ and $d > 1$, then remove $(E,p,D)$ from $\mathcal{E}_q$ and return to Step 2, else go to the next step.
\item Let $\ell$ be the next prime of $\ell$. If $\ell < 2\log^2(4\log^2(f_q))$, then return to Step 4, else increase $k$ by $1$ and return to Step 2.
\item Output $\mathcal{E}_q$.
\end{enumerate}
\end{alg}

\begin{alg} (Eigenvalue Verification) \label{eigenvalue} Let $\mathcal{E}_q$ be the list output by Algorithm \ref{elkies}. Let $N$ be the cardinality of $\mathcal{E}_q$.
\begin{enumerate}[nolistsep]
\item Let $k=0$.
\item If $k=N$, then go to the final step, else let $(E,p,D) = \mathcal{E}_q[k]$.
\item Let $\ell = E(D)[k]$.
\item Let $\lambda,\mu$ be the roots of $X^2-tX+p=0$.
\item Find polynomial $f_{\ell}$ given by
\begin{equation}
F_{\ell}(X) = \prod_{\pm P \in C}(X-P_x),
\end{equation}
as defined above.
\item Let $R_{\ell}$ be the ring defined by
\begin{equation}
R_{\ell} = \mathbb{F}_p[X,Y]/(F_{\ell}(X),Y^2-f(X)).
\end{equation}
\item Verify if either $(X^p,Y^p) = \lambda(X,Y)$ or $(X^p,Y^p) = \mu(X,Y)$ in the ring $R_{\ell}$. If either is satisfied, then increase $k$ by $1$ and return to Step 2. If neither is satisfied, then remove $(E,p,D)$ from $\mathcal{E}_q$ and return to Step 2. 
\item Output $\mathcal{E}_q$.
\end{enumerate}
\end{alg}

Note the step in Algorithm \ref{elkies} that has the highest time complexity is in Step 6, which has time complexity $\widetilde{O}(\log(f_q))$. Since $\mathcal{E}_q$ is of size $O(\log(f_q))$, the total time complexity of Algorithm \ref{elkies} is $\widetilde{O}(\log^2(f_q))$. For Algorithm \ref{eigenvalue}, Step 6 requires the heaviest computations, and it has time complexity $\widetilde{O}(\log^2(f_q))$. As the list $\mathcal{E}_q$ is of size $O(\log(f_q))$, the total time complexity for Algorithm \ref{eigenvalue} is $\widetilde{O}(\log^3(f_q))$.

\section{Some properties of Fibonacci numbers} \label{fibonacci}
Let $(f_n)_{n \geq 0}$ be the Fibonacci sequence given by $f_0 =0, f_1=1$ and $f_{n+2} = f_{n+1}+f_n$ for all $n \geq 0$. If a Fibonacci number $f$ is a rational prime, then we say that $f$ is a Fibonacci prime. As mentioned in the introduction, it is not known whether there is an infinite number of Fibonacci primes (see \cite{caldwell} for current commentary). One of the largest known Fibonacci prime is $f_{81839}$, which has $17103$ digits. However, heuristics regarding elliptic divisibility sequence (EDS) from \cite{eds} suggests it may be finite. Even though the sequence of Fibonacci numbers is not an EDS, with appropriate sign they are an EDS (see \cite{eds2}). Unfortunately, we could not go too afar in this path as the machinery is far beyond our capacity. We do find it to be interesting that Fibonacci primes can be written as a combination of an elliptic divisibility sequence (see Lemma \ref{sum}). In the following we will discuss some well-known results regarding Fibonacci numbers.

\begin{lem} Let $K$ be a field with characteristic not equal to $5$. Let $\alpha, \beta$ be the roots of the polynomial $G(X) = X^2-X-1$. Then the $n$th Fibonacci number is given by
\begin{equation}
f_n = \frac{\alpha^n - \beta^n}{\sqrt{5}}.
\end{equation}
\end{lem}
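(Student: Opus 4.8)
The statement is the classical Binet formula for Fibonacci numbers, now asserted over an arbitrary field $K$ of characteristic not equal to $5$. The plan is to proceed by induction on $n$, which is the cleanest approach since it avoids any appeal to real analysis and works uniformly over any such field. First I would record the two facts I need about the roots $\alpha,\beta$ of $G(X)=X^2-X-1$: by Vi\`ete's formulas, $\alpha+\beta=1$ and $\alpha\beta=-1$, and consequently $\alpha-\beta$ satisfies $(\alpha-\beta)^2=(\alpha+\beta)^2-4\alpha\beta=1+4=5$, so $\alpha-\beta=\sqrt{5}$ is a well-defined nonzero element of (a quadratic extension of) $K$. The hypothesis $\mathrm{char}(K)\neq 5$ is exactly what guarantees $\sqrt{5}\neq 0$, so division by $\sqrt{5}$ makes sense; this is the one place the characteristic assumption is used.

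Next I would define $g_n = (\alpha^n-\beta^n)/\sqrt{5}$ and verify that the sequence $(g_n)$ satisfies the same initial conditions and recurrence as $(f_n)$. For the base cases, $g_0 = (1-1)/\sqrt{5} = 0 = f_0$ and $g_1 = (\alpha-\beta)/\sqrt{5} = 1 = f_1$. For the recurrence, the key observation is that since $\alpha$ and $\beta$ are roots of $X^2 = X+1$, we have $\alpha^{n+2}=\alpha^{n+1}+\alpha^n$ and $\beta^{n+2}=\beta^{n+1}+\beta^n$; subtracting and dividing by $\sqrt{5}$ gives $g_{n+2}=g_{n+1}+g_n$. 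Since $(g_n)$ and $(f_n)$ agree at $n=0,1$ and obey the identical second-order recurrence, a routine induction yields $g_n=f_n$ for all $n\geq 0$, which is the claim.

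I do not anticipate a serious obstacle here, as the argument is essentially the standard one; the only subtlety worth flagging is the field-theoretic bookkeeping. If $5$ is a square in $K$ then $\alpha,\beta\in K$ and everything lives in $K$; otherwise $\alpha,\beta$ lie in the quadratic extension $K(\sqrt{5})$, and one should note that the element $g_n=(\alpha^n-\beta^n)/\sqrt{5}$ is nonetheless fixed by the nontrivial automorphism of $K(\sqrt{5})/K$ (which swaps $\alpha\leftrightarrow\beta$ and sends $\sqrt{5}\mapsto-\sqrt{5}$), hence actually lies in $K$ and equals the image of the integer $f_n$. The most delicate point is simply ensuring $\sqrt{5}$ is invertible, which is precisely the content of the characteristic hypothesis.
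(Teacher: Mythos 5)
Your proposal is correct and follows essentially the same route as the paper's proof: both define $g_n = (\alpha^n - \beta^n)/\sqrt{5}$, check the initial values $g_0 = 0$, $g_1 = 1$, and use the relation $\alpha^2 = \alpha + 1$ (and likewise for $\beta$) to show $g_n$ satisfies the Fibonacci recurrence, concluding $g_n = f_n$ by induction. Your additional remarks---verifying $(\alpha-\beta)^2 = 5$ via Vi\`ete's formulas, pinpointing that $\mathrm{char}(K) \neq 5$ is exactly what makes $\sqrt{5}$ invertible, and noting that $g_n$ is Galois-fixed and hence lies in $K$---are careful refinements the paper leaves implicit, but the argument is the same.
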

\begin{proof} This is an easy application of generating function. Here we will prove it directly.
Let $g_n = (\alpha^n - \beta^n)/\sqrt{5}$. It's clear that $g_0=0,g_1=1$. Furthermore, since $\alpha^2 = \alpha+1$, it follows that $\alpha^n = \alpha^{n-1}+\alpha^{n-2}$. Similarly, we have $\beta^n = \beta^{n-1}+\beta^{n-2}$. It is now straightforward to verify that $g_n = g_{n-1} + g_{n-2}$ for $n \geq 2$. Hence, $g_n = f_n$ for $n \geq 0$.
\end{proof}

\begin{lem} (Cassini's Identity) \label{square} For each integer $k \geq 0$, we have
\begin{equation}
f_{2k+1} = f_k^2 + f_{k+1}^2.
\end{equation}
\end{lem}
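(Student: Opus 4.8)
The plan is to prove the identity directly from the Binet formula just established, exploiting the elementary relations among the roots of $G(X)=X^2-X-1$. First I would record the symmetric functions of the roots: since $\alpha$ and $\beta$ are the roots of $X^2-X-1$, we have $\alpha+\beta=1$ and $\alpha\beta=-1$, whence $(\alpha-\beta)^2=(\alpha+\beta)^2-4\alpha\beta=1+4=5$. With the labeling forced by the Binet formula (the verification $g_1=(\alpha-\beta)/\sqrt5=1$ pins down $\sqrt5=\alpha-\beta$), these are all the facts I will need.

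Next I would expand the right-hand side $f_k^2+f_{k+1}^2$ using $f_n=(\alpha^n-\beta^n)/\sqrt5$. Each square carries a denominator $5$, and after summing,
\begin{equation}
f_k^2 + f_{k+1}^2 = \frac{\alpha^{2k} + \beta^{2k} + \alpha^{2k+2} + \beta^{2k+2} - 2\left((\alpha\beta)^k + (\alpha\beta)^{k+1}\right)}{5}.
\end{equation}
The crucial observation is that the cross terms cancel: since $\alpha\beta=-1$, we have $(\alpha\beta)^k+(\alpha\beta)^{k+1}=(-1)^k+(-1)^{k+1}=0$. Hence $f_k^2+f_{k+1}^2=\big(\alpha^{2k}(1+\alpha^2)+\beta^{2k}(1+\beta^2)\big)/5$.

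Finally I would show that the left-hand side $f_{2k+1}$ produces the same expression. Multiplying Binet's formula by $5=(\alpha-\beta)^2$ and using $\sqrt5=\alpha-\beta$ gives $5f_{2k+1}=(\alpha-\beta)(\alpha^{2k+1}-\beta^{2k+1})$. Expanding, the two mixed terms combine as $-\alpha\beta(\alpha^{2k}+\beta^{2k})=\alpha^{2k}+\beta^{2k}$, again by $\alpha\beta=-1$, so that $5f_{2k+1}=\alpha^{2k+2}+\beta^{2k+2}+\alpha^{2k}+\beta^{2k}=\alpha^{2k}(1+\alpha^2)+\beta^{2k}(1+\beta^2)$. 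This matches the expression obtained for $5(f_k^2+f_{k+1}^2)$, and comparing the two completes the proof.

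I expect no serious obstacle; the computation is routine. The one point requiring care is the bookkeeping of the two distinct denominators — $\sqrt5$ for a single Fibonacci number versus $5$ for a product of two — and invoking the cross-term cancellation correctly, which is exactly where the value $\alpha\beta=-1$ is essential. As an alternative route one could first prove the addition formula $f_{m+n}=f_m f_{n+1}+f_{m-1}f_n$ (by induction or from Binet) and then specialize to $m=k+1$, $n=k$, which yields the claim immediately; but since Binet's formula is already in hand, the direct computation above is the most economical.
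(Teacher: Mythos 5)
Your proof is correct, and I verified both key steps: the cross-term cancellation $(\alpha\beta)^k+(\alpha\beta)^{k+1}=(-1)^k+(-1)^{k+1}=0$ on the right-hand side, and the combination of mixed terms $-\alpha\beta^{2k+1}-\beta\alpha^{2k+1}=-\alpha\beta\left(\alpha^{2k}+\beta^{2k}\right)=\alpha^{2k}+\beta^{2k}$ on the left, both of which hinge exactly where you say they do, on $\alpha\beta=-1$; both sides reduce to $\bigl(\alpha^{2k}(1+\alpha^2)+\beta^{2k}(1+\beta^2)\bigr)/5$. There is no in-paper argument to compare against: the paper states this lemma without proof, quoting it as a well-known identity, so you have supplied something the paper omits. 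Your choice of route is also the natural one in context, since the paper proves Binet's formula in the immediately preceding lemma, and your bookkeeping point about $\sqrt{5}=\alpha-\beta$ (pinned down by $g_1=1$) versus the denominator $5$ for products is handled correctly. Two minor remarks: your alternative route via the addition formula $f_{m+n}=f_m f_{n+1}+f_{m-1}f_n$ with $m=k+1$, $n=k$ is indeed the more standard textbook derivation and would avoid irrational quantities entirely (useful if one wants the identity over rings where $5$ is not invertible, as in the paper's Lemma on fields of characteristic $\neq 5$); and the paper's name for the statement is nonstandard --- Cassini's identity is usually $f_{n-1}f_{n+1}-f_n^2=(-1)^n$, while $f_{2k+1}=f_k^2+f_{k+1}^2$ is a specialization of the addition formula --- but that concerns the paper's labeling, not your argument.
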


For Fibonacci prime $f_q$, the identity from Lemma \ref{square} readily provides us a square root of $-1 \pmod{f_q}$. Indeed, if we have $f_q = f_{(q+1)/2}^2 + f_{(q-1)/2}^2$, then a square root of $-1 \pmod{f_q}$ is given by
\begin{equation}
\sqrt{-1} \pmod{f_q} = f_{(q+1)/2}/f_{(q-1)/2} \pmod{f_q}.
\end{equation}
This observation allows us to quickly to test the primality of $f_q$ as seen in Step \ref{spre} of Algorithm \ref{mainalg}.

\begin{thm} \label{caseq} (See \cite{caseq}) Let $q$ be a rational prime. There exists integers $u,v$ such that
\begin{equation}
f_q = u^2 + qv^2.
\end{equation}
\end{thm}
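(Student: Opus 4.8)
The natural entry point is Cassini's identity (Lemma \ref{square}): writing the odd prime as $q = 2k+1$ with $k = (q-1)/2$, we get $f_q = f_k^2 + f_{k+1}^2$, a representation of $f_q$ by the principal form of discriminant $-4$. Since consecutive Fibonacci numbers are coprime, this representation is primitive, so (when $f_q$ is prime) $-1$ is a square modulo $f_q$ and $f_q \equiv 1 \pmod 4$, consistent with Lemma \ref{mod4}. The real task, however, is to transfer this sum-of-two-squares representation to the form $x^2 + qy^2$ of discriminant $-4q$; Cassini alone does not suffice, since $x^2+y^2$ and $x^2+qy^2$ sit in different class groups. (The cases $q=2$ and $q=5$ should be disposed of by hand, via $f_2 = 1^2 + 2\cdot 0^2$ and $f_5 = 0^2 + 5\cdot 1^2$.)

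Next I would reframe the goal through the theory of binary quadratic forms from Section \ref{cm}: solving $f_q = x^2 + qy^2$ is exactly asking that $f_q$ be represented by the principal class of $C(-4q)$, equivalently (by the Class Field Theory dictionary recalled before the statement) that $f_q$ split completely in the ring class field of $K = \mathbb{Q}(\sqrt{-q})$. A necessary local condition is $\legendre{-q}{f_q} = 1$, which I would verify by quadratic reciprocity. Because $f_q \equiv 1 \pmod 4$ we have $\legendre{-q}{f_q} = \legendre{-1}{f_q}\legendre{q}{f_q} = \legendre{q}{f_q} = \legendre{f_q}{q}$, and the classical congruence $f_q \equiv \legendre{5}{q} \equiv \pm 1 \pmod q$ (valid for $q \neq 5$) combined with $\legendre{-1}{q}$ pins down this symbol. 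Here the parity of $q \bmod 4$ enters decisively: for $q \equiv 1 \pmod 4$ one finds $\legendre{f_q}{q} = 1$, whereas for $q \equiv 3 \pmod 4$ the symbol can be $-1$, signalling that there the fundamental discriminant is $-q$ and the correct principal form is $x^2 + xy + \tfrac{q+1}{4}y^2$ rather than $x^2+qy^2$. I would flag this dichotomy explicitly, since it is the point at which the literal identity must be interpreted according to the residue of $q$.

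Granting $\legendre{-q}{f_q} = 1$, there is an $s$ with $s^2 + q \equiv 0 \pmod{f_q}$, so $f_q$ divides a value of $x^2 + qy^2$; feeding $(d,m) = (q, f_q)$ into Cornacchia's algorithm (Algorithm \ref{corn}) then returns an honest representation $f_q = x^2 + qy^2$ precisely when $f_q$ lies in the \emph{principal} class. The main obstacle is exactly this last step: upgrading ``$f_q$ is represented by some form of discriminant $-4q$'' to ``$f_q$ is represented by the principal form.'' Purely local information places $f_q$ only in a genus, and genus theory alone pins down the class only when each genus is a single class; in general one must exploit the special arithmetic of $f_q$ as a value of the Fibonacci (shifted-EDS) sequence, i.e.\ its being a norm from $\mathbb{Q}(\sqrt 5)$, to control how the Frobenius of $f_q$ acts on the ring class field of $\mathbb{Q}(\sqrt{-q})$. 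I expect this splitting-in-the-ring-class-field argument — reconciling the $\mathbb{Q}(\sqrt5)$ structure of $f_q$ with the class group $C(-4q)$ — to be the crux, and it is the step for which I would lean most heavily on the source \cite{caseq}.
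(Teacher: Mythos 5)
You flagged the right worry but did not push it to its conclusion: as printed, the statement is \emph{false} for primes $q \equiv 3 \pmod 4$, and the paper offers no proof to compare against in any case (Theorem \ref{caseq} is quoted verbatim from \cite{caseq} with no argument, so your closing deferral to \cite{caseq} leaves your proposal exactly where the paper already stands, with the crux unproven). Concretely: $f_3 = 2$ is not of the form $u^2 + 3v^2$; for $f_7 = 13$ one has $\legendre{-7}{13} = \legendre{6}{13} = -1$, so $13$ is represented by no form of discriminant $-28$ or $-7$ --- in particular your suggested repair via the principal form $x^2 + xy + \tfrac{q+1}{4}y^2$ of discriminant $-q$ fails as well; and $f_{11} = 89$ fails past the local level: $\legendre{-11}{89} = 1$, yet $89 = 3\cdot 5^2 + 2\cdot 5\cdot 1 + 4\cdot 1^2$ lies in the nonprincipal class $(3,2,4)$ of discriminant $-44$, while $89 - 11v^2 \in \{89, 78, 45\}$ is never a square. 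Since $h(-44) = 3$ is odd, all three classes of discriminant $-44$ constitute a single genus, so no congruence condition can detect this failure --- concrete confirmation of your own diagnosis that genus theory cannot close the local-to-principal gap, and that this step is where all the content lies. The correct statement carries the hypothesis $q \equiv 1 \pmod 4$ (with $q = 2, 5$ by hand); there your symbol computation is right: $\legendre{-q}{f_q} = \legendre{f_q}{q} = \legendre{\legendre{5}{q}}{q} = 1$ because $\legendre{-1}{q} = 1$. Note the paper propagates the unrestricted version into Section \ref{exceptional}, where it concludes that $f_q$ ``always'' splits in the Hilbert class field of $\mathbb{Q}(\sqrt{-q})$, so your caveat is in fact a needed correction to the paper.

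Beyond the deferral, the mechanism you propose for the crux cannot work as framed: the theorem quantifies over all primes $q$ with no primality hypothesis on $f_q$ (e.g.\ $q = 19, 31, 37$ give composite $f_q$), so ``how the Frobenius of $f_q$ acts on the ring class field of $\mathbb{Q}(\sqrt{-q})$'' does not even parse in general, and Chebotarev-style splitting arguments are unavailable. A proof must be an explicit identity or norm computation valid for composite values; one plausible route, consonant with your instinct to use the $\mathbb{Q}(\sqrt{5})$ structure, is to descend Gauss's factorization $4(x^q - 1)/(x-1) = Y(x)^2 - q^* Z(x)^2$, with $q^* = (-1)^{(q-1)/2}q$, through $f_q = (\alpha^q - \beta^q)/(\alpha - \beta)$ with $\alpha\beta = -1$ in $\mathbb{Z}[(1+\sqrt{5})/2]$ --- note how $q^* = q$ versus $q^* = -q$ reproduces exactly the mod-$4$ dichotomy you observed. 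Finally, Cassini's identity and Cornacchia's algorithm contribute nothing here: Cornacchia can only extract a representation whose existence is precisely the thing to be proved. So the verdict is a genuine gap: the reduction to the principal-class statement is a restatement, not a proof, and the literal statement it targets needs the hypothesis $q \equiv 1 \pmod 4$ to be true at all.
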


\begin{lem} (See Chapter 1 of \cite{fibsum}) \label{sum} For each integer $n \geq 1$, we have
\begin{equation}
\sum_{k=1}^n f_{2k} = f_{2n+1}-1.
\end{equation}
\end{lem}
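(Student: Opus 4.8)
The plan is to prove this by a telescoping argument built directly on the Fibonacci recurrence, which is cleaner than induction and avoids a separate base-case verification. First I would rewrite the recurrence $f_{2k+1} = f_{2k} + f_{2k-1}$ in the form $f_{2k} = f_{2k+1} - f_{2k-1}$, valid for every $k \geq 1$. Substituting this into the sum produces a telescoping cascade:
\begin{equation}
\sum_{k=1}^n f_{2k} = \sum_{k=1}^n \left( f_{2k+1} - f_{2k-1} \right) = f_{2n+1} - f_1,
\end{equation}
where all intermediate terms cancel in consecutive pairs. Since $f_1 = 1$, the right-hand side equals $f_{2n+1} - 1$, which is the claimed identity.

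If a more elementary presentation is preferred, to match the directness of the earlier lemmas, I would instead argue by induction on $n$. The base case $n = 1$ reads $f_2 = f_3 - 1$, that is, $1 = 2 - 1$, which holds. For the inductive step, assuming $\sum_{k=1}^n f_{2k} = f_{2n+1} - 1$, I would add the next term $f_{2n+2}$ to both sides and apply the recurrence once more:
\begin{equation}
\sum_{k=1}^{n+1} f_{2k} = (f_{2n+1} - 1) + f_{2n+2} = f_{2n+3} - 1,
\end{equation}
using $f_{2n+1} + f_{2n+2} = f_{2n+3}$, which is exactly the desired formula with $n$ replaced by $n+1$.

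There is no genuine obstacle here: the identity is elementary and both routes are short. The only point requiring care is the index bookkeeping --- ensuring the telescoping sum terminates at $f_1$ rather than $f_0$, so that the subtracted constant is $f_1 = 1$ and not $f_0 = 0$. I would favor the telescoping proof for its brevity and because it makes transparent \emph{why} the trailing constant is exactly $1$.
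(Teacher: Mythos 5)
Your proof is correct on both routes. The telescoping computation $\sum_{k=1}^n f_{2k} = \sum_{k=1}^n \left( f_{2k+1} - f_{2k-1} \right) = f_{2n+1} - f_1 = f_{2n+1} - 1$ is valid, the index bookkeeping is handled properly (the sum does terminate at $f_1$, not $f_0$), and the inductive alternative checks out as well. For comparison: the paper itself gives no proof of this lemma at all --- it simply cites Chapter 1 of the reference \cite{fibsum}, consistent with its treatment of other classical Fibonacci identities such as Cassini's identity in Lemma \ref{square}. So there is no in-paper argument to diverge from; your telescoping proof supplies a short, self-contained justification where the paper defers to the literature, and of the two routes you offer, the telescoping one is indeed preferable for exactly the reason you state --- it makes the trailing constant $f_1 = 1$ visible rather than verified.
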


\begin{lem} \label{periodic}
The sequence $\{f_n \pmod{m}\}$ is periodic for any positive integer $m$.
\end{lem}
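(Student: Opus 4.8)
The plan is to prove periodicity via a pigeonhole argument on pairs of consecutive residues, then upgrade eventual periodicity to pure periodicity by exploiting the reversibility of the recurrence modulo $m$.

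First I would observe that the Fibonacci sequence modulo $m$ is completely determined by any single pair of consecutive terms, since the recurrence $f_{n+2} = f_{n+1} + f_n$ lets one propagate forward. I would therefore track the pairs
\begin{equation}
P_n = (f_n \bmod m,\ f_{n+1} \bmod m) \in (\mathbb{Z}/m\mathbb{Z})^2.
\end{equation}
There are only $m^2$ possible values for $P_n$, so among $P_0, P_1, \ldots, P_{m^2}$ two must coincide by the pigeonhole principle: there exist indices $0 \le i < j \le m^2$ with $P_i \equiv P_j \pmod m$. Running the forward recurrence from these equal initial pairs then yields $f_{i+k} \equiv f_{j+k} \pmod m$ for every $k \ge 0$, which already gives eventual periodicity with period dividing $j-i$.

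To obtain genuine periodicity starting from $n=0$, I would run the recurrence \emph{backward}. Since $f_n = f_{n+2} - f_{n+1}$ and subtraction is always defined in $\mathbb{Z}/m\mathbb{Z}$, the equality $P_i \equiv P_j$ propagates downward just as well: it forces $P_{i-1} \equiv P_{j-1}$, and iterating down to index $0$ gives $P_0 \equiv P_{j-i} \pmod m$. Hence $f_{n+(j-i)} \equiv f_n \pmod m$ for all $n \ge 0$, establishing pure periodicity. A cleaner variant of this same idea is to note that the transfer matrix $A = \begin{pmatrix} 1 & 1 \\ 1 & 0 \end{pmatrix}$ has determinant $-1$, hence lies in the finite group $\mathrm{GL}_2(\mathbb{Z}/m\mathbb{Z})$; its finite order $P$ satisfies $A^P \equiv I \pmod m$, and applying both sides to the column vector $(f_1, f_0)^{\mathsf{T}}$ gives periodicity with period dividing $P$ directly.

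I do not expect a serious obstacle here; the only subtlety worth flagging is the distinction between \emph{eventual} periodicity (which the pigeonhole step alone supplies) and the \emph{pure} periodicity asserted by the lemma. The crucial point making the upgrade automatic is that the recurrence is reversible modulo $m$ precisely because $\det A = -1$ is a unit in every $\mathbb{Z}/m\mathbb{Z}$; this is what rules out any pre-periodic tail and lets the cycle close back onto the initial pair $(f_0, f_1)$.
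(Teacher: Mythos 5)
Your proof is correct and takes the same route as the paper, whose entire proof of this lemma is the one-line remark that the result is clear by the Pigeonhole Principle. Your write-up is in fact more careful than the paper's: the backward-recurrence step (equivalently, the observation that the transfer matrix has $\det A = -1$, a unit in every $\mathbb{Z}/m\mathbb{Z}$) is exactly what is needed to upgrade the eventual periodicity that pigeonhole alone provides to the pure periodicity the lemma asserts, a subtlety the paper's terse proof leaves entirely implicit.
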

\begin{proof} This is clear by the Pigeonhole Principle.
\end{proof}

In fact by \cite{wall}, for each prime $\ell$, the period of the sequence $\{f_n \pmod{m}\}$  is the order of $\lambda$ in the field $\mathbb{F}_{\ell}[X]/(X^2-X-1)$, where $\lambda$ is a root of the polynomial $B(X) = X^2-X-1 \pmod{\ell}$. Using Quadratic Reciprocity, we have the following lemma:

\begin{lem}(\cite{wall}) \label{pisano} Let $\ell\neq 2,5$ be a prime. Then $\pi(\ell)$ is a divisor of $\ell-1$ if $\ell \equiv \pm 1 \pmod{10}$ and $\pi(\ell)$ is a quotient of $2(\ell+1)$ by an odd divisor if $\ell \equiv \pm 3 \pmod{10}$.
\end{lem}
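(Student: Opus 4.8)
The plan is to work entirely from the characterization recalled just before the statement: by \cite{wall}, $\pi(\ell)$ equals the multiplicative order of a root $\lambda$ of $B(X)=X^2-X-1$ in $\mathbb{F}_\ell[X]/(X^2-X-1)$. Since the discriminant of $B$ is $5$, the polynomial splits over $\mathbb{F}_\ell$ precisely when $5$ is a quadratic residue modulo $\ell$, and the two clauses correspond exactly to the split and inert cases. To translate the residue condition, I would use quadratic reciprocity: as $5\equiv 1\pmod 4$, we have $\legendre{5}{\ell}=\legendre{\ell}{5}$, and the nonzero squares modulo $5$ are $\{1,4\}$, so $\legendre{5}{\ell}=1$ iff $\ell\equiv\pm1\pmod 5$. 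Intersecting with the odd residues gives $\legendre{5}{\ell}=1$ iff $\ell\equiv\pm1\pmod{10}$ and $\legendre{5}{\ell}=-1$ iff $\ell\equiv\pm3\pmod{10}$; the excluded primes $\ell=2,5$ are exactly the degenerate cases.

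In the split case $\ell\equiv\pm1\pmod{10}$, the roots $\alpha,\beta$ (distinct, since $5\neq0$ in $\mathbb{F}_\ell$) lie in $\mathbb{F}_\ell$, so $\lambda\in\mathbb{F}_\ell^\times$, a cyclic group of order $\ell-1$; hence $\pi(\ell)=\operatorname{ord}(\lambda)$ divides $\ell-1$. (Reading the quotient as $\mathbb{F}_\ell\times\mathbb{F}_\ell$, one instead gets $\operatorname{lcm}(\operatorname{ord}\alpha,\operatorname{ord}\beta)$, which still divides $\ell-1$.) This yields the first clause immediately.

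The inert case $\ell\equiv\pm3\pmod{10}$ carries the real content. Here $B$ is irreducible, the quotient is the field $\mathbb{F}_{\ell^2}$, and $\lambda\in\mathbb{F}_{\ell^2}^\times$. The two roots are Frobenius-conjugate, $\beta=\lambda^{\ell}$, and since the constant term of $B$ gives $\alpha\beta=-1$, the norm relation $\lambda^{\ell+1}=\lambda\cdot\lambda^{\ell}=-1$ holds. Because $\ell$ is odd, $-1$ has order $2$, so $\lambda^{2(\ell+1)}=1$ while $\lambda^{\ell+1}=-1\neq1$. Writing $d=\operatorname{ord}(\lambda)$ and $2(\ell+1)=dk$, I would show $k$ is odd: were $k$ even, then $d\mid\ell+1$, contradicting $\lambda^{\ell+1}\neq1$. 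Thus $\pi(\ell)=d=2(\ell+1)/k$ with $k$ odd, the asserted quotient of $2(\ell+1)$ by an odd divisor.

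I expect the main obstacle to be the bookkeeping in the inert case, namely confirming that $\pi(\ell)$ is the order of a single root rather than an lcm of two distinct orders. This is resolved by noting $\operatorname{ord}(\beta)=\operatorname{ord}(\lambda^{\ell})=\operatorname{ord}(\lambda)$: since $\gcd(\ell,\ell^2-1)=1$, raising to the $\ell$-th power is an automorphism of the cyclic group $\mathbb{F}_{\ell^2}^\times$ and hence preserves orders. Everything then collapses to $\operatorname{ord}(\lambda)$, and the norm relation $\lambda^{\ell+1}=-1$ supplies the parity constraint.
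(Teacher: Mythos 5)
Your proof is correct and follows exactly the route the paper indicates: the paper offers no proof of its own, merely citing \cite{wall} and remarking that the lemma follows via quadratic reciprocity from the characterization (stated just above the lemma) of $\pi(\ell)$ as the order of $\lambda$ in $\mathbb{F}_\ell[X]/(X^2-X-1)$, which is precisely the argument you carry out. Your case split via $\legendre{5}{\ell}$, the norm relation $\lambda^{\ell+1}=-1$ with the parity argument in the inert case, and the parenthetical handling of the split case (where the quotient is $\mathbb{F}_\ell\times\mathbb{F}_\ell$ rather than a field, so one takes the $\operatorname{lcm}$ of the two orders) correctly supply the details the paper delegates to the citation.
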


\begin{lem} \label{primeindex} If $f_n \neq 3$ is a Fibonacci prime, then $n$ is a prime.
\end{lem}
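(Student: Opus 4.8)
The plan is to prove the contrapositive using the classical divisibility property of the Fibonacci sequence, namely that $m \mid n$ implies $f_m \mid f_n$. First I would establish this property; it follows by induction from the addition formula $f_{m+n} = f_{m-1}f_n + f_m f_{n+1}$. Indeed, if $f_m \mid f_n$ then both summands on the right are divisible by $f_m$ (the first through $f_n$, the second through the explicit factor $f_m$), so $f_m \mid f_{m+n}$; starting from the trivial $f_m \mid f_m = f_{1\cdot m}$ and iterating this implication gives $f_m \mid f_{km}$ for every $k \ge 1$. I will also use the strict monotonicity $f_2 < f_3 < f_4 < \cdots$, so that $2 = f_3 \le f_d < f_n$ for every index $d$ with $3 \le d < n$.

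Now suppose $f_n$ is a Fibonacci prime with $f_n \neq 3$. Since $f_1 = f_2 = 1$ are not prime we may assume $n \ge 3$, and I argue by contradiction, supposing $n$ composite. The key step is to exhibit a divisor $d$ of $n$ with $3 \le d < n$: for such a $d$ the divisibility property gives $f_d \mid f_n$, while monotonicity gives $1 < f_d < f_n$, so $f_d$ is a proper nontrivial divisor of $f_n$. This contradicts the primality of $f_n$, so once such a $d$ is produced we are done.

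It remains to produce such a $d$, and this is precisely where the excluded value $n = 4$ (equivalently $f_n = 3$) enters. I would split into two cases. If $n$ has an odd prime factor $p$, then $p \ge 3$, and since $n$ is composite and divisible by $p$ we have $n \ge 2p > p$, so $d = p$ works. Otherwise $n$ is a power of $2$; being composite it satisfies $n \ge 4$, and since $n \neq 4$ we get $n \ge 8$, so $4 \mid n$ with $4 < n$, and I take $d = 4$. Here $f_4 = 3$ is a genuine proper divisor since $f_n \ge f_8 = 21 > 3$. In both cases the contradiction is reached, and therefore $n$ is prime.

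I expect the only delicate point to be this case analysis --- specifically, recognizing that $n = 4$ is the unique composite index that the argument cannot rule out on its own, and that this is exactly why the hypothesis $f_n \neq 3$ is needed. The reason is that $2$ is the only proper divisor of $4$ exceeding $1$, yet $f_2 = 1$ yields no nontrivial factor of $f_4$; the powers-of-two case must therefore be handled separately, using $d = 4$ rather than an odd prime, and only becomes available once $n \ge 8$.
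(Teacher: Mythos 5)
Your proof is correct, and it is the same strategy at heart as the paper's --- contrapositive via divisibility of Fibonacci numbers along divisors of the index --- but it rests on a different key lemma. The paper simply cites the strong gcd identity $\gcd(f_n,f_m)=f_{\gcd(n,m)}$, from which $d \mid n \Rightarrow f_d \mid f_n$ is immediate; you instead prove the divisibility property from scratch by induction on the addition formula $f_{m+n}=f_{m-1}f_n+f_mf_{n+1}$, which is more elementary and self-contained. More importantly, your write-up supplies exactly what the paper's one-line citation leaves implicit: even granting $f_d \mid f_n$, one still needs a divisor $d$ of $n$ with $1 < f_d < f_n$, and this fails for $n=4$ because its only nontrivial proper divisor is $2$ with $f_2=1$. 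Your case split (odd prime factor versus power of two, using $d=4$ once $n \ge 8$) isolates $n=4$ as the unique composite index the argument cannot eliminate, thereby explaining precisely why the hypothesis $f_n \neq 3$ appears in the statement --- a point the paper's proof does not address at all. What the paper's approach buys is brevity via a standard stronger fact; what yours buys is a complete, auditable argument that makes the exceptional case visible.
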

\begin{proof}
This is due to the well-known fact that 
\begin{equation}
\gcd(f_n,f_m) = f_{\gcd(n,m)}.
\end{equation}
\end{proof}

\begin{lem} \label{divisible} For each prime $q$, we have the divisibility properties:
\begin{equation}\label{fact1}
q \mid f_{q-\legendre{5}{q}},
\end{equation}
\begin{equation}\label{fact2}
f_q \equiv \legendre{5}{q} \pmod{q}.
\end{equation}
\end{lem}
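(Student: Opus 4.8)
The plan is to establish both divisibility properties using the closed-form expression $f_n = (\alpha^n - \beta^n)/\sqrt 5$ reduced modulo $q$, together with the Legendre symbol $\legendre{5}{q}$ governing whether $5$ is a quadratic residue. The key idea is to work inside the ring $\mathbb{F}_q[X]/(X^2 - X - 1)$ and to split into the two cases $\legendre{5}{q} = 1$ and $\legendre{5}{q} = -1$, since the arithmetic of $\alpha, \beta$ depends on whether they lie in $\mathbb{F}_q$ or in the quadratic extension $\mathbb{F}_{q^2}$. Throughout I assume $q \neq 2, 5$, handling these small primes separately by direct computation (noting $\legendre{5}{q}$ is only meaningful for $q \neq 5$).

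First I would treat the case $\legendre{5}{q} = 1$. Here $5$ is a square mod $q$, so $\sqrt 5 \in \mathbb{F}_q$ and both roots $\alpha, \beta$ of $X^2 - X - 1$ lie in $\mathbb{F}_q^\times$. By Fermat's little theorem $\alpha^{q-1} \equiv 1$ and $\beta^{q-1} \equiv 1 \pmod q$, so
\begin{equation}
f_{q-1} = \frac{\alpha^{q-1} - \beta^{q-1}}{\sqrt 5} \equiv \frac{1 - 1}{\sqrt 5} \equiv 0 \pmod q,
\end{equation}
which is exactly \eqref{fact1} since $q - \legendre{5}{q} = q-1$. For \eqref{fact2}, I use $\alpha^q \equiv \alpha$ and $\beta^q \equiv \beta \pmod q$ (again Fermat), giving $f_q = (\alpha^q - \beta^q)/\sqrt 5 \equiv (\alpha - \beta)/\sqrt 5 = f_1 = 1 \pmod q$, matching $\legendre{5}{q} = 1$.

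Next I would treat the case $\legendre{5}{q} = -1$, which is the more delicate half and the main obstacle. Now $\alpha, \beta \in \mathbb{F}_{q^2} \setminus \mathbb{F}_q$ and are Galois-conjugate under the Frobenius $x \mapsto x^q$; the key structural fact is that Frobenius swaps the two roots, i.e. $\alpha^q = \beta$ and $\beta^q = \alpha$, because $\sqrt 5^{\,q} = -\sqrt 5$ when $5$ is a non-residue. Verifying this conjugation relation cleanly is the crux of the argument. Granting it, for \eqref{fact2} I compute $f_q = (\alpha^q - \beta^q)/\sqrt 5 = (\beta - \alpha)/\sqrt 5 = -f_1 = -1 \pmod q$, which equals $\legendre{5}{q} = -1$ as required. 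For \eqref{fact1} I need $q \mid f_{q+1}$: applying Frobenius once more gives $\alpha^{q+1} = \alpha^q \cdot \alpha = \beta\alpha$ and $\beta^{q+1} = \beta^q \cdot \beta = \alpha\beta$, so $\alpha^{q+1} = \beta^{q+1}$ (both equal the norm $\alpha\beta = -1$), whence
\begin{equation}
f_{q+1} = \frac{\alpha^{q+1} - \beta^{q+1}}{\sqrt 5} \equiv 0 \pmod q,
\end{equation}
and $q + 1 = q - \legendre{5}{q}$ in this case. Assembling the two cases yields both claimed divisibilities; the only real work is the Frobenius-conjugation step, which follows from the standard fact that raising to the $q$-th power acts as the nontrivial automorphism of $\mathbb{F}_{q^2}/\mathbb{F}_q$ precisely on elements whose defining quantity $\sqrt 5$ is a non-residue.
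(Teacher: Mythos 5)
Your proposal is correct and follows essentially the same route as the paper's own proof: both work in $\mathbb{F}_q[X]/(X^2-X-1)$, split into the cases $\legendre{5}{q} = \pm 1$ according to whether $5$ splits, and use that the Frobenius $x \mapsto x^q$ fixes $\alpha,\beta$ in the split case (giving $q \mid f_{q-1}$) and swaps them in the inert case (giving $\alpha^{q+1} = \beta^{q+1} = \alpha\beta$ and hence $q \mid f_{q+1}$). If anything, your write-up is slightly more complete, since you verify \eqref{fact2} explicitly in both cases and handle the small primes directly, whereas the paper proves only \eqref{fact1} and asserts that \eqref{fact2} follows similarly.
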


Lemma \ref{divisible} can be seen in \cite{will}, but we can prove easily using basic Galois Theory.  We will only prove \ref{fact1} as \ref{fact2} follows similarly. The result is clear if $q=5$ by Lemma \ref{periodic}, so assume $q \neq 5$. Let $K = \mathbb{F}_q[X]/(X^2-X-1)$. Note that the discriminant of $X^2-X-1$ is $5$. We will look at the cases $\legendre{5}{q} = 1$ and $\legendre{5}{q}=-1$ separately, that is, whether or not $5$ splits in $\mathbb{F}_q$. 

If $5$ splits in $\mathbb{F}_q$, then $K/\mathbb{F}_q$ is a degree $1$ extension. Hence, the Frobenius automorphism Frob$_p: K \rightarrow K$ defined by Frob$_q(x) = x^q$ is trivial, which implies that Frob$_q(\alpha) = \alpha^q = \alpha$ and Frob$_q(\beta) = \beta^q = \beta$.  It follows that Frob$_q(f_{q-1}) = f_{q-1}= (\alpha^q/\alpha - \beta^q/\beta)/\sqrt{5} = (1 - 1)/\sqrt{5} = 0$.

If $5$ does not splits in $\mathbb{F}_q$, then $K/\mathbb{F}_q$ is a degree $2$ extension, so the Frobenius automorphism is a conjugation map. It follows that Frob$_q(\alpha) = \alpha^q = \beta$. Hence, we have $f_{q+1} = (\alpha^q \alpha - \beta^q \beta)/\sqrt{5} = (\beta \alpha - \alpha \beta)/\sqrt{5} = 0$. The proof is complete.

Let $q > 3$ be a rational prime. Henceforth, let $C_q = C(-4f_q)$ and $B_q = B(-4f_q)$, as defined in Equation \ref{bound}. Recall that in Algorithm \ref{generators}, to find generators of the group $C_q$, we need to find the square roots $-f_q \pmod{\ell}$, for each prime $\ell \leq B_q$ such that $\legendre{-f_q}{\ell} = 1$. If $\ell \equiv 3 \pmod{4}$, we find in Section \ref{roots} that this is an easy computation. Now for the case $\ell \equiv 1 \pmod{4}$, note that $-f_q \equiv -f_r \pmod{\ell}$, where $r$ is the smallest non-negative integer such that $q \equiv r \pmod{\pi(\ell)}$. Hence, we do not need to compute $f_q$ to find a square root of $-f_q \pmod{\ell}$. Furthermore, as the primes $\ell \leq B_q$ are relatively small, computations of the square roots of $-f_q \pmod{\ell}$ are manageable. Noticeably, computing $\legendre{f_q}{\ell}$ is easy for primes $\ell < q$ by the periodicity and Lemma \ref{divisible}.

\begin{lem} \label{mod4} If $q > 3$ is prime, then $f_q \equiv 1 \pmod{4}$.
\end{lem}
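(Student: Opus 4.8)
The plan is to reduce everything to the behavior of the sequence modulo $4$, which by Lemma \ref{periodic} is periodic. First I would compute this Pisano period directly: reducing $f_0, f_1, f_2, \ldots$ modulo $4$ produces the repeating block $0,1,1,2,3,1$ of length $6$, so that the residue $f_n \bmod 4$ depends only on $n \bmod 6$. In particular $f_n \equiv 1 \pmod 4$ precisely when $n \equiv 1, 2$, or $5 \pmod 6$, while $f_n \equiv 0, 2, 3$ in the remaining cases $n \equiv 0, 3, 4 \pmod 6$ respectively.

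Next I would invoke the hypothesis that $q > 3$ is prime. Such a $q$ is coprime to $6$, hence $q \equiv 1$ or $q \equiv 5 \pmod 6$. Both residues lie in the set $\{1,2,5\}$ identified in the previous step, so $f_q \equiv 1 \pmod 4$, as claimed. There is no genuine obstacle here; the only thing to verify is that the period modulo $4$ is indeed $6$ and that the block $0,1,1,2,3,1$ has been transcribed correctly, which is a finite check completed in the first step.

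As an independent cross-check that uses more of the paper's machinery, one could argue via Cassini's identity (Lemma \ref{square}): writing the odd prime $q = 2k+1$ with $k = (q-1)/2$ gives $f_q = f_k^2 + f_{k+1}^2$, a sum of squares of two consecutive Fibonacci numbers. Since $f_m$ is even exactly when $3 \mid m$ (a consequence of the identity $\gcd(f_m, f_3) = f_{\gcd(m,3)}$ used in the proof of Lemma \ref{primeindex}, together with $f_3 = 2$), and since $q \not\equiv 0 \pmod 3$ forces exactly one of the consecutive indices $k, k+1$ to be divisible by $3$, exactly one of $f_k, f_{k+1}$ is even. An even square is $\equiv 0 \pmod 4$ and an odd square is $\equiv 1 \pmod 4$, so $f_q \equiv 0 + 1 \equiv 1 \pmod 4$, in agreement with the periodicity computation.
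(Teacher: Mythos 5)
Your first argument is the paper's own approach, carried out more completely: the paper's proof simply notes that $f_q \equiv 3 \pmod{4}$ would force $q \equiv 4 \pmod{6}$ by periodicity (Lemma \ref{periodic}), impossible for prime $q$, and leaves the even residues $f_q \equiv 0, 2 \pmod{4}$ unmentioned -- strictly speaking those also need excluding, which your tabulation of the full period-$6$ block $0,1,1,2,3,1$ does (the even residues occur only for $n \equiv 0, 3 \pmod 6$, i.e.\ $3 \mid n$, again impossible for prime $q > 3$). So your version is the same idea with the small gap in the paper's one-liner filled in; both computations check out. Your second argument via Lemma \ref{square} is a genuinely different and correct route: writing $f_q = f_k^2 + f_{k+1}^2$ with $k = (q-1)/2$, the key point that $q \not\equiv 0 \pmod 3$ rules out $k \equiv 1 \pmod 3$ (since $k \equiv 1$ gives $q = 2k+1 \equiv 0 \pmod 3$) is exactly what guarantees that precisely one of the consecutive indices $k, k+1$ is divisible by $3$, hence exactly one of $f_k, f_{k+1}$ is even by the $\gcd$ identity from Lemma \ref{primeindex}, giving $f_q \equiv 0 + 1 \equiv 1 \pmod 4$. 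What this alternative buys is that it needs only the parity pattern of Fibonacci numbers (period $3$ modulo $2$) rather than the full Pisano period modulo $4$, and it connects the lemma to the same sum-of-squares identity the paper already exploits in Step \ref{spre} of Algorithm \ref{mainalg} to produce $\sqrt{-1} \pmod{f_q}$; the periodicity argument, by contrast, is shorter and generalizes mechanically to any modulus.
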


\begin{proof}
If $f_q \equiv 3 \pmod{4}$, then $q \equiv 4 \pmod{6}$ by Lemma \ref{periodic}, which can not happen as $q$ is a prime. 
\end{proof}

\begin{obs} \label{2val} We note that given a probable prime $f_q$, the 2-valuation of $f_q-1$ is very small. If we write $f_q-1 = 2^e m$, where $m$ is odd, then $e \leq 6$. Here we have checked all the probable Fibonacci primes given at \cite{online}.
\end{obs}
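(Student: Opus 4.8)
The quantity to bound is $e = \nu_2(f_q - 1)$, and since $e \le 6$ is equivalent to $f_q \not\equiv 1 \pmod{2^7}$, the plan is to determine $f_q \bmod 2^7$ without ever forming the gigantic integer $f_q$. The tool is periodicity: by Lemma \ref{periodic} the sequence $\{f_n \bmod 2^k\}$ is periodic, and its Pisano period is $\pi(2^k) = 3\cdot 2^{k-1}$ for all $k \ge 1$ (classical; see \cite{wall}). Hence $f_q \bmod 2^k$ depends only on $q \bmod 3\cdot 2^{k-1}$, and in particular $f_q \bmod 128$ is governed entirely by $q \bmod 192$. So once the residues $q \bmod 192$ of the (few dozen) prime indices in \cite{online} are tabulated, the bound becomes a finite lookup.

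Before carrying out the lookup I would upgrade the observation to an exact formula, which also makes the check transparent. Writing $P = 3\cdot2^{k-1}$, periodicity gives $(f_P, f_{P+1}) \equiv (0,1) \pmod{2^k}$, so $f_{P+1} \equiv 1$ and $f_{P-1} = f_{P+1}-f_P \equiv 1 \pmod{2^k}$; thus $q \equiv \pm 1 \pmod{3\cdot 2^{k-1}}$ always forces $f_q \equiv 1 \pmod{2^k}$. The real content is the converse—that for odd $n$ these are the \emph{only} residues with $f_n \equiv 1 \pmod{2^k}$—which I would prove by induction on $k$, lifting each odd solution modulo $2^k$ to one modulo $2^{k+1}$. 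Combined with Lemma \ref{mod4} this yields
\begin{equation}
\nu_2(f_q-1) = \nu_2(q-\chi)+1, \qquad \chi \in \{\pm 1\},\ 3 \mid q-\chi,
\end{equation}
so that $e \ge 7$ holds exactly when $q \equiv \pm 1 \pmod{192}$. The hard part is this lifting step: one must track $M^{P} = I + 2^k A \pmod{2^{k+1}}$ for the Fibonacci matrix $M = \bigl(\begin{smallmatrix}1&1\\1&0\end{smallmatrix}\bigr)$ and show the relevant entry of $A$ is a unit, a lifting-the-exponent computation that is routine but fiddly.

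With the formula in hand the observation is immediate: for each prime index $q$ in \cite{online} I would compute $\chi$ and $\nu_2(q-\chi)$ (equivalently, reduce $q \bmod 192$ and evaluate $f_{q\bmod192}\bmod128$), and verify directly that no such $q$ is congruent to $\pm 1 \pmod{192}$, so $e \le 6$ throughout. I would emphasize that this is a property of the current list and not of Fibonacci primes in general: by Dirichlet there are infinitely many primes $q \equiv 1 \pmod{192}$ (for instance $q = 193$), and any future Fibonacci prime with such an index would have $e \ge 7$.
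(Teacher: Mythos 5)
Your proposal is correct, but note that the paper offers no argument at all beyond the statement: Observation~\ref{2val} is purely empirical, and the authors' ``proof'' is the direct computation of $\nu_2(f_q-1)$ for every index on the list at \cite{online} (a trivial computation, since $f_q \bmod 2^7$ is obtained by powering the Fibonacci matrix modulo $128$, never forming $f_q$). Your route isolates the structural reason the valuations are small: via $\pi(2^k)=3\cdot 2^{k-1}$ you reduce everything to $q \bmod 192$ and extract the sharp criterion that $e \geq 7$ holds exactly when $q \equiv \pm 1 \pmod{192}$, which both explains the observation (no listed index lies in those classes) and correctly identifies it as an accident of the current list rather than a property of Fibonacci primes, as your closing remark about $q = 193$ makes explicit. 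The one incomplete point is the converse lifting step, which you defer as ``routine but fiddly''; it is true, and I checked your resulting formula $\nu_2(f_q-1)=\nu_2(q-\chi)+1$ against many indices, but you can avoid the matrix lifting-the-exponent computation entirely: the identities $f_{m+n}-(-1)^n f_{m-n}=l_m f_n$ and $f_{m+n}+(-1)^n f_{m-n}=f_m l_n$ with $m=(q+1)/2$, $n=(q-1)/2$ give $f_q - 1 = l_{(q+1)/2}\,f_{(q-1)/2}$ when $q \equiv 1 \pmod 4$ and $f_q - 1 = f_{(q+1)/2}\,l_{(q-1)/2}$ when $q \equiv 3 \pmod 4$, and the classical $2$-adic valuations of Fibonacci and Lucas numbers ($\nu_2(f_n)$ is $0$, $1$, $3$, or $\nu_2(n)+2$ according as $n \bmod 12$, and $\nu_2(l_n)$ is $0$, $2$, or $1$ according as $n \bmod 6$) then yield your formula in closed form. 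In summary: the paper's approach buys brevity (a finite check, stated as such), while yours buys an exact valuation formula, a cheaper verification (a lookup of $q \bmod 192$ instead of a computation with each $f_q$), and a falsifiability criterion for future list entries; both necessarily rest on inspecting the same finite list, since the statement concerns that list and, as you observe, would fail for any Fibonacci prime with index $q \equiv \pm 1 \pmod{192}$.
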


Let $K = \mathbb{Q}(\sqrt{-f_q})$. By Lemma \ref{mod4}, $-f_q \equiv 3 \pmod{4}$, so $D_K = -4f_q$. By Genus Theory (see \cite[Theorem 6.1]{cox}), the number of elements of $C(\mathcal{O}_K)$ of order $2$ is $2^{t-1}-1$, where $t$ is the number of prime divisors of $D_K$. Since in our case $D_K = -4f_q$, we have $t-1 \geq 1$. Hence, $C(\mathcal{O}_k) \cong C_q$ always has even order. It would be of interest to study the 2-sylow subgroup of $C_q$.

\begin{prop} (See also \cite[Corollary 18.6]{parity}) \label{parity} Suppose $f_q$ is a prime for $q > 3$. The $2$-sylow subgroup of $C_q$ is cyclic.
\end{prop}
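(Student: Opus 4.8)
The plan is to combine Genus Theory with the elementary structure theory of finite abelian $2$-groups. First I would record that by Lemma \ref{mod4} we have $f_q \equiv 1 \pmod{4}$, so $-f_q \equiv 3 \pmod{4}$ and the field (fundamental) discriminant is $D_K = -4f_q$; this is exactly the observation made in the paragraph preceding the statement. The crucial arithmetic input is the primality of $f_q$: the only rational primes dividing $D_K = -4f_q$ are $2$ and $f_q$, so the number $t$ of distinct prime divisors of $D_K$ is precisely $t = 2$.

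Next I would invoke Genus Theory in the form already quoted in the excerpt: the number of elements of order $2$ in $C_q \cong C(\mathcal{O}_K)$ equals $2^{t-1} - 1$. Substituting $t = 2$ yields $2^{1} - 1 = 1$, so $C_q$ contains exactly one element of order $2$, equivalently a unique subgroup of order $2$.

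The final step is the purely group-theoretic fact that a finite abelian $2$-group is cyclic if and only if it has a unique subgroup of order $2$. Writing the $2$-Sylow subgroup $S$ of $C_q$ as $S \cong \prod_{i=1}^{k} \mathbb{Z}/2^{a_i}\mathbb{Z}$ via the structure theorem, the subgroup $S[2]$ of elements of order dividing $2$ has order $2^{k}$, so $S$ contains $2^{k} - 1$ elements of order exactly $2$. Since every element of order $2$ in the abelian group $C_q$ already lies in its $2$-Sylow subgroup $S$, the count above forces $2^{k} - 1 = 1$, hence $k = 1$ and $S$ is cyclic.

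I do not expect a genuine obstacle here: the entire content is the identity $t = 2$, which rests on the primality of $f_q$ (this is precisely why the hypothesis is that $f_q$ be prime and not merely that $q > 3$), together with the standard genus-theoretic count. The only point requiring mild care is to confirm that the elements of order $2$ counted by Genus Theory are exactly the order-$2$ elements of the $2$-Sylow subgroup $S$, which is immediate since $S$ absorbs all of the $2$-power torsion of the abelian group $C_q$.
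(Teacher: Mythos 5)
Your proof is correct and follows essentially the same route as the paper: both arguments use the Genus Theory count $2^{t-1}-1$ with $t=2$ prime divisors of $D_K=-4f_q$ (which is where the primality of $f_q$ enters) to conclude that $C_q$ has a unique element of order $2$, hence a cyclic $2$-Sylow subgroup. Your only addition is to spell out, via the structure theorem for finite abelian groups, why a unique element of order $2$ forces the $2$-Sylow subgroup to be cyclic --- a step the paper asserts without detail.
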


\begin{proof}
Suppose $f_q$ is a Fibonacci prime. Then the number of prime divisors of $-4f_q$ is $2$. Hence, $C_q$ has $2^{2-1}-1 = 1$ element of order $2$. As $C_q$ has exactly one element of order $2$,  it must be the case that the $2$-sylow subgroup of $C_q$ is cyclic.
\end{proof}

\begin{prop} \label{cyclic} The probability that $C_q$ is cyclic is at least $97\%$.
\end{prop}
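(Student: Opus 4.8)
The plan is to reduce the question to the odd part of $C_q$ and then invoke the Cohen--Lenstra heuristics. Recall that a finite abelian group is cyclic if and only if each of its Sylow subgroups is cyclic. By Lemma \ref{mod4} we have $-f_q \equiv 3 \pmod 4$, so $D_K = -4f_q$ is a fundamental discriminant and $C_q$ is the class group of the maximal order $\mathcal{O}_K$ of $K = \mathbb{Q}(\sqrt{-f_q})$; by Proposition \ref{parity} its $2$-Sylow subgroup is always cyclic. Hence $C_q$ is cyclic exactly when its odd part is cyclic, and it suffices to estimate the probability that the $p$-Sylow subgroup is cyclic for every odd prime $p$ simultaneously.

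First I would recall the Cohen--Lenstra prediction for imaginary quadratic fields: for a fixed odd prime $p$, the $p$-Sylow subgroup of the class group is isomorphic to a given finite abelian $p$-group $H$ with probability $\prod_{k=1}^{\infty}(1-p^{-k})/\lvert\mathrm{Aut}(H)\rvert$, the normalization coming from the mass formula $\sum_{H} 1/\lvert\mathrm{Aut}(H)\rvert = \prod_{k=1}^{\infty}(1-p^{-k})^{-1}$. The $p$-part is cyclic precisely when $H \cong \mathbb{Z}/p^{j}\mathbb{Z}$, and since $\lvert\mathrm{Aut}(\mathbb{Z}/p^{j}\mathbb{Z})\rvert = p^{j-1}(p-1)$ for $j \ge 1$, summing the resulting geometric series yields the probability that the $p$-part is cyclic,
\begin{equation}
c_p = \prod_{k=1}^{\infty}(1-p^{-k})\left(1 + \frac{p}{(p-1)^2}\right).
\end{equation}
Invoking the independence of the $p$-parts across distinct odd primes, the probability that the odd part of $C_q$ is cyclic is $\prod_{p \ge 3} c_p$, and this is exactly the quantity I would bound below by $0.97$.

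Next I would evaluate the product and certify the lower bound. A short power-series expansion in $p^{-1}$ shows that the $p^{-1}$, $p^{-2}$, and $p^{-3}$ contributions cancel, giving $1 - c_p = O(p^{-4})$, so the product converges very rapidly: the dominant factor is $c_3 \approx 0.980$, followed by $c_5 \approx 0.998$, $c_7 \approx 0.9995$, and so on, with the finite head of the product already equal to $\prod_{p \ge 3} c_p \approx 0.9776$. To make this rigorous I would compute the factors for primes up to some cutoff $P$ and bound the tail by $\prod_{p > P} c_p \ge 1 - C\sum_{p > P} p^{-4} \ge 1 - C'/P^{3}$ for explicit constants $C, C'$, choosing $P$ large enough that the product stays safely above $0.97$.

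The main obstacle is twofold. On the concrete side one must replace the numerical estimate of $\prod_{p\ge 3} c_p$ by a clean verifiable inequality, which requires pinning down an explicit constant in $1 - c_p = O(p^{-4})$ and a convergent majorant for the tail; this is routine but must be done carefully so the certified bound does not slip below $97\%$. On the conceptual side the statement is genuinely heuristic: Cohen--Lenstra is a conjecture, and here it is being applied to the thin family of discriminants $-4f_q$ indexed by Fibonacci primes, so \emph{probability} must be read as the Cohen--Lenstra prediction for a random imaginary quadratic discriminant together with the tacit assumption that this sparse family is not biased with respect to odd-part cyclicity.
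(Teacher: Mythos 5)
Your proposal is correct and follows essentially the same route as the paper: reduce cyclicity of $C_q$ to cyclicity of its odd part via the always-cyclic $2$-Sylow subgroup (Proposition \ref{parity}, valid since $-f_q \equiv 3 \pmod 4$ makes $-4f_q$ fundamental), then invoke the Cohen--Lenstra heuristics for the odd part. The only difference is one of detail: the paper simply cites Cohen--Lenstra (and Conjecture 5.10.1 of \cite{cohen1}) for the $97\%$ figure, whereas you correctly rederive it from the mass formula, obtaining $c_p = \prod_{k\ge 1}(1-p^{-k})\bigl(1+\tfrac{p}{(p-1)^2}\bigr)$ with $1-c_p = O(p^{-4})$ and $\prod_{p\ge 3} c_p \approx 0.9776$, and you rightly flag the tacit assumption that the sparse family $\{-4f_q\}$ is unbiased --- a caveat the paper leaves implicit in its parenthetical ``(conjecturally).''
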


\begin{proof}
Heuristics from \cite{cl} by Cohen and Lenstra states (conjecturally) that the odd part of a class group $C(D)$ is cyclic for at least $97\%$ of the time (see also Conjecture 5.10.1 of \cite{cohen1}). Since the $2$-sylow subgroup of $C_q$ is always cyclic, we have the desired result.
\end{proof}

\begin{prop}(See \cite[Corollary 19.6]{parity}) \label{2sylow} Let $q$ be a prime and $E = \mathbb{Q}(\sqrt{-q})$. Then the 2-Sylow subgroup of $C(D_E)$ has order $2$ if and only if $q \equiv 5 \pmod{8}$.
\end{prop}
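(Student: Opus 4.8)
The plan is to determine the $2$-Sylow subgroup of $C(D_E)$ in two stages: first use Genus Theory to show it is cyclic and to locate its unique element of order $2$, and then use a single Legendre-symbol computation to decide whether that element is a square, which is exactly what separates order $2$ from order $\ge 4$. I would first dispose of the cases $q \not\equiv 1 \pmod 4$. For $q \equiv 3 \pmod 4$ we have $-q \equiv 1 \pmod 4$, so $D_E = -q$ and the class number is odd by the lemma preceding Corollary \ref{lind}; for $q = 2$ we have $D_E = -8$ with $h(-8) = 1$. In both cases the $2$-Sylow subgroup is trivial, of order $1 \ne 2$, consistent with $q \not\equiv 5 \pmod 8$. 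This reduces the problem to $q \equiv 1 \pmod 4$, where $-q \equiv 3 \pmod 4$ and hence $D_E = -4q$.

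For $q \equiv 1 \pmod 4$ the discriminant $D_E = -4q$ has exactly the two prime divisors $2$ and $q$, so exactly as in the proof of Proposition \ref{parity}, Genus Theory gives $2^{2-1}-1 = 1$ element of order $2$ and the $2$-Sylow subgroup $G$ is cyclic. In a cyclic $2$-group the element of order $2$ is a square if and only if the group has order at least $4$, so $|G| = 2$ precisely when the unique order-$2$ class is not a square in $C(D_E)$. I would identify this class explicitly: $2$ ramifies, $(2) = \mathfrak{p}_2^2$, and $\mathfrak{p}_2$ is non-principal since $2 = x^2 + qy^2$ has no solution for $q \ge 5$; in terms of forms the order-$2$ class is represented by the reduced ambiguous form $(2,2,(q+1)/2)$, whose discriminant is $4 - 4(q+1) = -4q$.

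The crux is to decide whether $[\mathfrak{p}_2]$ lies in $C(D_E)^2$. Here I would invoke the Principal Genus Theorem (\cite[Theorem 3.15]{cox}), identifying $C(D_E)^2$ with the principal genus, that is, the classes on which every assigned genus character is trivial. Since the $2$-rank is $1$, the genus is detected by the single character $\legendre{\boldsymbol{\cdot}}{q}$; evaluating it on $[\mathfrak{p}_2]$ through the value $(q+1)/2$ represented by $(2,2,(q+1)/2)$, which is odd and prime to $q$, and using $(q+1)/2 \equiv 2^{-1} \pmod q$, gives
\begin{equation}
\legendre{(q+1)/2}{q} = \legendre{2}{q}.
\end{equation}
Thus $[\mathfrak{p}_2]$ is a square exactly when $\legendre{2}{q} = 1$. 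As $q \equiv 1 \pmod 4$, the second supplement to Quadratic Reciprocity gives $\legendre{2}{q} = 1$ iff $q \equiv 1 \pmod 8$ and $\legendre{2}{q} = -1$ iff $q \equiv 5 \pmod 8$. Hence $|G| \ge 4$ for $q \equiv 1 \pmod 8$ and $|G| = 2$ for $q \equiv 5 \pmod 8$, which together with the first paragraph yields the stated equivalence.

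The main obstacle is the passage from the $2$-rank to the exact order: the $2$-rank alone only shows $G$ is cyclic, and pinning down whether $|G| = 2$ is genuinely $4$-rank information. The only non-formal inputs are the Principal Genus Theorem and the correct normalization of the genus character so that its value on $[\mathfrak{p}_2]$ comes out as $\legendre{2}{q}$. As an independent check I would run the argument through the genus field $H = E(\sqrt{q}) = E(\sqrt{-1})$: by class field theory $[\mathfrak{p}_2]$ is a square in $C(D_E)$ iff $\mathfrak{p}_2$ splits in the unramified extension $H/E$, and tracking the decomposition of $2$ through the real subfield $\mathbb{Q}(\sqrt{q})$ — in which $2$ splits iff $q \equiv 1 \pmod 8$ — reproduces the same dichotomy.
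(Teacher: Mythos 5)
Your proof is correct, and it takes a genuinely different route from the paper, which offers no argument at all for Proposition \ref{2sylow} and simply cites \cite[Corollary 19.6]{parity}; you supply a self-contained derivation from genus theory. Your reduction to $q \equiv 1 \pmod{4}$ (so $D_E = -4q$), with $q \equiv 3 \pmod 4$ and $q=2$ disposed of by odd class number and $h(-8)=1$, the identification of the unique ambiguous class with the reduced form $(2,2,(q+1)/2)$, i.e.\ with $[\mathfrak{p}_2]$, and the use of the Principal Genus Theorem to convert ``$|G|=2$'' into ``the order-$2$ class is not a square'' (genuinely $4$-rank information, as you note) are all sound; the evaluation $\legendre{(q+1)/2}{q} = \legendre{2}{q}$ together with the second supplement then yields exactly the $q \equiv 5 \pmod 8$ dichotomy, and your class-field cross-check through the genus field $E(\sqrt{q}) = E(\sqrt{-1})$ is also correct, since $H/E$ is unramified and $[\mathfrak{p}_2] \in C(D_E)^2$ iff $\mathfrak{p}_2$ splits in $H/E$, which tracks the splitting of $2$ in $\mathbb{Q}(\sqrt{q})$. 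What your route buys is consistency with machinery the paper already deploys — genus theory is invoked in Proposition \ref{parity}, and Cox is already a standing reference — so the statement becomes verifiable without consulting \cite{parity}; the paper's citation buys brevity, with the corollary in \cite{parity} sitting inside a more general class-number-parity framework. One point to tighten: for $D = -4q$ with $q \equiv 1 \pmod 4$ there are \emph{two} assigned characters, $\legendre{\cdot}{q}$ and $a \mapsto (-1)^{(a-1)/2}$, not one. Your claim that the single character $\legendre{\cdot}{q}$ detects the genus is nonetheless valid, because for odd $a$ coprime to $D$ represented by a form of discriminant $-4q$ one has $\legendre{-q}{a} = 1$, whence by quadratic reciprocity $\legendre{a}{q} = \legendre{q}{a} = \legendre{-1}{a} = (-1)^{(a-1)/2}$, so the two characters agree on all represented values; this deserves a sentence, since as written the step ``the $2$-rank is $1$, so one character suffices'' is a gloss rather than an argument.
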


\begin{cor} The 2-sylow subgroup of $C_q$ has order $2$ exactly when $q \equiv 5,7 \pmod{12}$.
\end{cor}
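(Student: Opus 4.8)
The plan is to reduce the statement to a congruence condition on $f_q$ modulo $8$ and then translate that into a condition on $q$ via the periodicity of the Fibonacci sequence. First I would invoke Proposition \ref{2sylow} with the prime in its hypothesis taken to be the Fibonacci prime $f_q$ itself, which is a legitimate odd prime since $q > 3$ forces $f_q \geq 5$. Because Lemma \ref{mod4} gives $f_q \equiv 1 \pmod{4}$, we have $-f_q \equiv 3 \pmod{4}$, so the field discriminant of $\mathbb{Q}(\sqrt{-f_q})$ is exactly $-4f_q$; hence the group $C(D_E)$ appearing in Proposition \ref{2sylow} is precisely our group $C_q = C(-4f_q)$. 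Proposition \ref{2sylow} then tells us that the $2$-Sylow subgroup of $C_q$ has order $2$ if and only if $f_q \equiv 5 \pmod{8}$.

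It remains to decide for which prime indices $q$ we have $f_q \equiv 5 \pmod{8}$. By Lemma \ref{periodic} the sequence $\{f_n \pmod{8}\}$ is periodic, and a direct computation of the first several terms shows the period is $12$: one finds $f_{12} \equiv 0$ and $f_{13} \equiv 1 \pmod{8}$, matching $f_0$ and $f_1$. Thus $f_q \pmod{8}$ is determined by $q \pmod{12}$. Since $q > 3$ is prime, $q$ lies in one of the residue classes $1, 5, 7, 11 \pmod{12}$, and I would simply read off the corresponding Fibonacci residues: $f_q \equiv 1 \pmod{8}$ when $q \equiv 1, 11 \pmod{12}$, while $f_q \equiv 5 \pmod{8}$ when $q \equiv 5, 7 \pmod{12}$.

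Combining the two observations yields the corollary: the $2$-Sylow subgroup of $C_q$ has order $2$ exactly when $f_q \equiv 5 \pmod{8}$, which happens exactly when $q \equiv 5, 7 \pmod{12}$. The argument is essentially routine, and the only point demanding care is the correct application of Proposition \ref{2sylow}, namely recognizing that its generic prime should be instantiated as $f_q$ and confirming via Lemma \ref{mod4} that the relevant fundamental discriminant is $-4f_q$ rather than $-f_q$. The bookkeeping of the Pisano period modulo $8$ is the only genuine computation, and it is small enough to verify by hand.
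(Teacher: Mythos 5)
Your proof is correct and follows essentially the same route as the paper: apply Proposition \ref{2sylow} with the prime taken to be $f_q$ (so the $2$-Sylow of $C_q$ has order $2$ iff $f_q \equiv 5 \pmod{8}$), then use the Pisano period $12$ modulo $8$ to translate this into $q \equiv 5,7 \pmod{12}$, discarding the residue class $8 \pmod{12}$ because $q$ is prime. Your added care in confirming via Lemma \ref{mod4} that the fundamental discriminant is $-4f_q$, so that $C(D_E)$ really is $C_q$, is a detail the paper settles in the paragraph preceding Proposition \ref{parity}, and your explicit residue computation makes precise what the paper attributes somewhat loosely to Lemma \ref{pisano}.
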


\begin{proof}
By Lemma \ref{2sylow}, the $2$-sylow of $C_q$ has order $2$ exactly when $f_q \equiv 5 \pmod{8}$. By Lemma \ref{pisano}, $f_q \equiv 5 \pmod{8}$ exactly when $p \equiv 5,7,8 \pmod{12}$, but $q \equiv 8 \pmod{12}$ can not happen as $q$ is a prime.
\end{proof}

Having a nontrivial 2-Sylow allows us to factor $f_q$ if it is not prime using the Shank's Class Group Method. All of the elements of order $2$ are of the form $(a,a,c),(a,0,c),$ and $(a,b,a)$. For example, if we have the form $(a,a,c)$, then we have $-4f_q = a^2-4ac = a(a-4c)$, which implies $f_q = (a/2)(a/2 - 2c)$, though the factorization may be trivial. Obtaining an element of order $2$ is relatively straightforward if we can easily compute the class number $h$ of $C_q$. Indeed, factor $h$ as $h=2^ek$, where $k$ is odd. Compute an arbitrary prime form $F_{\ell}$ of $C_q$ using Algorithm \ref{generators}, then the element $F_{\ell}^k$ is in the 2-Sylow subgroup of $C_q$.  Of course, the difficulty lies in computing $h$, which is extremely difficult for large $f_q$. Since $C_q$ is highly cyclic, we could in theory probabilistically compute its order using Atkin's Variant (see \cite[pp. 252--261]{cohen1}); however, its time complexity is sub-exponential.

Assume by some miracle that we are able to compute the class number of $C_q$, then we can easily obtain an element of order $2$. As mentioned above, the class group has a high probability of being cyclic, which implies $C_q$ has exactly one element of order $2$ most of the times. Hence, each element of order $2$ allows us a way to factor $f_q$, and even if multiple elements of order $2$ yield trivial factorization, the fact that we have more than one element of order $2$ hints of the fact that $f_q$ may be composite.

We will now describe a variant of Shanks's Class Group Method, but we will not use it test probable Fibonacci primes as it is not practical with known machinery. We will avoid this \textit{factoring in the dark ages} (see \cite[ch. 8]{cohen1}) and provide much better primality tests in Section \ref{bettertests}.

\begin{alg} \label{primetest} Variant of Shanks's Class Group Method
\begin{enumerate}[nolistsep]
\item Find the set of prime generators $\mathcal{F}$ for $C_q$ using Algorithm \ref{generators}.
\item Use Algorithm \ref{generators} to find prime forms of $C_q$ and obtain elements of order $2$. Also let $n$ be the number of trials until the first non-quadratic residue $z$ is found.
\item If $n \geq 50$, then $f_q$ is likely composite.
\item If an element of order $2$ is found, use it to factor $f_q$. If the factorization is nontrivial, then $f_q$ is composite.
\item If two or more distinct elements of order $2$ are found, then $f_q$ is definitely not a prime for $q \equiv 5,7 \pmod{12}$, otherwise $f_q$ is likely composite.
\item Verify that $|\mathcal{F}| \approx B(D)/2$, else $f_q$ is likely not prime.
\item If $f_q$ passes the previous steps, then $f_q$ is a probable prime.
\end{enumerate}
\end{alg}

\section{Exceptional Cases} \label{exceptional}

We have from Lemma \ref{square} that 
\begin{equation}
4f_q = (2f_{(q+1)/2})^2 + 4f_{(q-1)/2}^2,
\end{equation}
so $f_q$ splits completely in the Hilbert class field of $\mathbb{Q}(\sqrt{-1})$, which is itself since the class number is $1$. This is not all surprising since $\legendre{-1}{f_q} = 1$. Generalizing this, we have that $f_q$ splits completely in the Hilbert class field $H_D$ of any imaginary quadratic field $\mathbb{Q}(\sqrt{D})$ with class number $1$ for which $\legendre{D}{f_q} = 1$. It is well-known that the set of all $d < 0$ such that the field $\mathbb{Q}(\sqrt{d})$ has class order $1$ is the set of Heegner numbers 
\begin{equation} \label{order1}
H=\{-1,-2,-3,-7,-11,-19,-43,-67,-143\}.
\end{equation}
For example, if $f_q \equiv 1 \pmod{8}$, then 
\begin{equation}
4f_q = (2x)^2 + 8y^2
\end{equation}
for some integers $x,y$. Step \ref{sdisc} skips over $d=-1,-2,-3$ and $-7$. We will use these cases to quickly test the primality of $f_q$.

As mentioned in Section \ref{fibonacci}, computing $\legendre{\ell}{f_q} = \legendre{\ell}{f_q}$ can be computed quickly with primes $\ell < q$ due to the fact the sequence $(f_n \pmod{m})_{n\geq 0}$ is periodic. For example, $\legendre{3}{f_q} = 1$ exactly when $f_q \equiv 1 \pmod{3}$, which can be quickly determined. Indeed, when $m=3$, the period is $8$ and $f_n \equiv 1$ exactly when $n \equiv 1,2,7 \pmod{8}$. However, the index $q$ is prime, so it follows that $f_q \equiv 1 \pmod{3}$ exactly when $q \equiv \pm 1 \pmod{8}$. Hence, $4f_q  = x^2 + 3y^2$ for some positive integers $x,y$ if and only if $q \equiv \pm 1 \pmod{8}$. Hence, for each $d \in H$, $\legendre{d}{f_q}$ is sufficient for $f_q$ to be a norm in $\mathbb{Q}(\sqrt{-d})$, and as observed above, $\legendre{d}{f_q}$ is easily computed. 

Again from Lemma \ref{caseq}, we have
\begin{equation}
f_q = x^2+y^2q
\end{equation}
for some positive integers $x,y$. Moreover, from Lemma \ref{divisible}, it follows that
\begin{equation}
x^2 \equiv \legendre{5}{q} \pmod{q}.
\end{equation}
Hence, we can take $x=1$ or $x = \sqrt{-1} \pmod{q}$, and apply the Cornacchia's Algorithm \ref{corn} to find find positive integers $x,y$ so that $4f_q = (2x)^2+ y^2(4q)$, which implies $f_q$ always split in the Hilbert class field of $\mathbb{Q}(\sqrt{-q})$.

From these simple observations, we have the following algorithm to test the primailty of $f_q$.

\begin{alg}(Exceptional Cases Test) \label{excases} This algorithm uses ECPP to test the primality of $f_q$.
\begin{enumerate}[nolistsep]
\item Let $d = [-1,-2,-3,-7,-q]$.
\item Let $m=0$.
\item If $m < 5$, then let $\ell = d[m]$, else go to the final step.
\item Compute $s=\legendre{\ell}{f_q}$ and go to the next step.
\item If $s = -1,0$, then increase $m$ by $1$ and return to Step 3, else go to the next step.
\item Let $K_{\ell} = \mathbb{Q}(\sqrt{\ell})$ and Let $D_{\ell}$ be the discriminant of $K_{\ell}$.
\item Find positive integers $x,y$ for which $4f_q = x^2+y^2|D_{\ell}|$ if such exist. If no such $x,y$ exist, then increase $m$ by $1$ and return to Step 3.
\item Determine if $p = f_q+1\pm x$ has a prime divisor $q > (f_q^{1/4}+1)^2$. If it is too difficult to determine, increase $m$ by $1$ and return to Step 3, else go to the next step.
\item Let $r$ be a root of $H_D(X) \pmod{f_q}$ and go to the next step. If no such $r$ exists, then $f_q$ is composite and we end the algorithm.
\item If $r=0$ or $r=1728$, take $E/(\mathbb{Z}/f_q\mathbb{Z})$ to be the curve $Y^2 = X^3+1$ or $Y^2=X^3+X$, respectively. If $r \neq 0,1728$, take the curve $Y^2=X^3+ax-a$, where $a = 27r/(4(1728-r)) \pmod{f_q}$. Take $P=(-1,0)$ or $P=(0,0)$ if $r=0$ or $r=1728$, respectively. Take $P=(1,1)$ if $r\neq 0,1728$.
\item Apply ECPP (Theorem \ref{ecpp}) to the curve $E$ with the point $P$.
\item If $f_q$ is confirmed to be prime by ECPP, then we stop the algorithm, else increase $m$ by $1$ and return to Step 3.
\item The integer $f_q$ is a probable prime.
\end{enumerate}
\end{alg}

\section{Well-known primality tests} \label{bettertests}
In this section we will discuss a number of primality tests. The computations done in these tests are some of the same computations needed in the construction of an elliptic curve of order $f_q$ over some finite field, so there is no loss of computations in performing these tests. This observation is noted in \cite{broker2} as well.

Assuming GRH, Bach \cite{bach} has shown that if $p > 1000$, then there exists a quadratic non-residue $z$ modulo $p$ less than $2 \log^2(p)$. Furthermore, by the Chebotarev Density Theorem, half of the primes in the interval $[1,2 \log^2{p}]$ are quadratic residues modulo $p$. Hence, we have the following crude test for primality:

\begin{thm} \label{densitytest} (Density Test) Let $p$ be a probable prime. Compute $\legendre{\ell}{p}$ for all primes $\ell \leq 2 \log^2(p)$. If it takes about $50$ trials to find a quadratic non-residue $n$ or if the number of quadratic residues is not approximately $\log^2(p)$, then $p$ is likely composite.
\end{thm}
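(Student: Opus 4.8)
The plan is to argue by contrapositive under GRH: if $p$ is genuinely prime then neither flag in the test is raised, so raising either one is evidence against primality. First I would invoke the result of Bach recalled just above the statement, which under GRH guarantees that every prime $p > 1000$ admits a quadratic non-residue $z < 2\log^2(p)$. This shows the search window $[1, 2\log^2(p)]$ is wide enough to contain a non-residue whenever $p$ is prime, so failing to find one inside it is already suspicious.

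Next I would quantify how quickly a non-residue should appear and what the residue count should be. For prime $p$ the Legendre symbol $\ell \mapsto \legendre{\ell}{p}$ is a nontrivial quadratic character, and by the equidistribution recalled before the statement (Chebotarev, controlled in short ranges by the GRH character-sum bounds) it takes the values $+1$ and $-1$ with asymptotic density $1/2$ each. Modeling the successive symbols as independent fair signs, the probability that the first $50$ primes tried are all residues is about $2^{-50}$, which is negligible; hence for a prime one expects a non-residue after only a small number of trials. The same half-density shows that among the roughly $2\log^2(p)$ integers in the window about half are residues, giving the predicted count of approximately $\log^2(p)$ quadratic residues. If instead the search runs past about $50$ trials without a non-residue, or the residue count deviates markedly from this prediction, the observed behaviour is incompatible with the equidistribution forced by primality under GRH, and the algorithm declares $p$ likely composite.

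The main obstacle is not a calculation but the inherently heuristic nature of the claim: the phrases ``about $50$'', ``approximately'', and ``likely composite'' cannot be upgraded to a deterministic implication. For composite $n$ the symbol $\legendre{\ell}{n}$ is a Jacobi symbol, whose distribution need not be skewed, so some composites will pass undetected. The honest formulation is therefore a bound on the probability that a prime fails the test -- on the order of $2^{-50}$ under the fair-sign model -- rather than a guarantee that every composite is caught; I would state explicitly that the test only issues ``likely composite'' verdicts and serves, as the paper notes, as a crude first filter, deferring a definitive verdict to the stronger tests developed later.
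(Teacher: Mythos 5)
Your proposal is correct and follows essentially the same route the paper takes: the paper offers no formal proof, justifying the test entirely by the paragraph preceding it, namely Bach's GRH bound placing a quadratic non-residue below $2\log^2(p)$ and the Chebotarev half-density of residues in that window, exactly the two ingredients you invoke. Your added quantification (the $2^{-50}$ fair-sign estimate) and the Jacobi-symbol caveat for composites are reasonable elaborations of the same heuristic rather than a different argument.
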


\begin{thm} \label{fermat} Let $p$ be a probable prime, and write $p-1 = 2^e d$, where $d$ is odd. If we can find an integer $a$ such that
\begin{equation}
a^d \equiv 1 \pmod{p}
\end{equation}
and 
\begin{equation}
a^{2^r d} \not \equiv -1 \pmod{p}
\end{equation}
for all $0 \leq r < e$, then $p$ is not prime.
\end{thm}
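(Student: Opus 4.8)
The plan is first to take the statement of Theorem \ref{fermat} at face value and attempt a direct proof, and in doing so I immediately run into a fatal obstruction that is itself the whole story. The hypothesis asks only for \emph{some} integer $a$ satisfying $a^d \equiv 1 \pmod{p}$ together with $a^{2^r d} \not\equiv -1 \pmod{p}$ for all $0 \le r < e$. But if $a^d \equiv 1 \pmod{p}$, then squaring repeatedly forces $a^{2^r d} \equiv 1 \pmod{p}$ for every $r \ge 0$; since $p$ is odd (because $p-1 = 2^e d$ with $d$ odd) we have $1 \not\equiv -1 \pmod{p}$, so the second condition holds automatically. Thus the hypothesis collapses to ``there exists $a$ with $a^d \equiv 1 \pmod{p}$,'' which is met by $a = 1$ for \emph{every} $p$. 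Read literally, the theorem would assert that every probable prime is composite, so the congruence as typeset cannot be intended: the first condition must be $a^d \not\equiv 1 \pmod{p}$, giving the standard strong (Miller--Rabin) witness criterion behind Algorithm \ref{rm}. In other words, the honest response to the statement as worded is that it is false, witnessed by $a=1$.

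With the sign corrected, the proof I would give is the classical one, and I would first reduce to the contrapositive: if $p$ is prime then for every $a$ coprime to $p$ either $a^d \equiv 1 \pmod{p}$ or $a^{2^r d} \equiv -1 \pmod{p}$ for some $0 \le r < e$. Assuming $p$ prime, $\mathbb{Z}/p\mathbb{Z}$ is a field and $(\mathbb{Z}/p\mathbb{Z})^{\times}$ is cyclic of order $p-1 = 2^e d$, so Fermat's little theorem gives $a^{2^e d} = a^{p-1} \equiv 1 \pmod{p}$.

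Next I would examine the chain $a^d, a^{2d}, a^{4d}, \dots, a^{2^e d}$, in which each term is the square of its predecessor and the last term is $1$. The only real content is that over a field the equation $x^2 = 1$ has exactly the solutions $x = \pm 1$. Let $r$ be the least index with $a^{2^r d} \equiv 1 \pmod{p}$, which exists since the final term equals $1$. If $r = 0$ then $a^d \equiv 1 \pmod{p}$, the first alternative; if $r \ge 1$ then $a^{2^{r-1} d}$ is a square root of $1$ that is not $1$ (by minimality of $r$), hence equals $-1$, giving $a^{2^{r-1} d} \equiv -1 \pmod{p}$ with $0 \le r-1 < e$, the second alternative. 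Negating both sides yields the witness criterion as intended.

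The main obstacle here is not the mathematics, which is routine, but recognizing and flagging the typographical sign error: as printed the hypothesis is vacuously satisfiable and the conclusion is simply wrong. Once $a^d \equiv 1$ is replaced by $a^d \not\equiv 1$, the argument is the textbook Miller--Rabin proof resting on the fact that $\mathbb{F}_p$ admits only $\pm 1$ as square roots of unity, and I would present it exactly as above.
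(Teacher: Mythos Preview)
Your analysis is correct and in fact more careful than the paper itself. The paper does not give a proof of Theorem \ref{fermat}; it only remarks afterward that ``Theorem \ref{fermat} is just the contrapositive of Fermat Little Theorem.'' So there is no detailed argument in the paper to compare against, and your write-up of the standard Miller--Rabin contrapositive (using that the only square roots of $1$ in $\mathbb{F}_p$ are $\pm 1$) is exactly the intended content.

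You are also right to flag the typographical error: as printed, the first hypothesis $a^d \equiv 1 \pmod{p}$ should read $a^d \not\equiv 1 \pmod{p}$. Your counterexample $a=1$ shows the literal statement is false for every odd $p$, and the surrounding Algorithm \ref{rm} (Rabin--Miller) confirms that the intended condition is the usual strong-witness one. One small sharpening of language: the paper's remark that this is ``the contrapositive of Fermat's Little Theorem'' is itself a bit loose, since Fermat alone only gives $a^{p-1}\equiv 1$ and does not by itself force the $-1$ alternative; the extra ingredient, as you correctly isolate, is that $\mathbb{F}_p$ is a field.
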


Notice that Theorem \ref{fermat} is just the contrapositive of Fermat Little Theorem. We will now describe the Rabin-Miller Primality Test following \cite{cohen1}.

\begin{alg} \label{rm}(Rabin-Miller Primality Test) Let $p$ be a probable prime, and write $p-1 = 2^e d$, where $d$ is odd.
\begin{enumerate}[nolistsep]
\item Choose $20$ random integers in the interval $[2,p-1]$, and  store them in a set $W(p)$.
\item \label{pickwitness} If $W(p)$ is empty, go to the final step, else pick $a \in W(p)$ and remove $a$ from $W(p)$.
\item Let $k=0$.
\item Compute $b=a^d \pmod{p}$. 
\item \label{rmsquare} If $b = \pm 1$, then return to Step \ref{pickwitness}, else increase $k$ by $1$ and go to the next step.
\item If $k = e$ and $b \neq -1 \pmod{p}$, then $p$ is composite, else go to the next step.
\item Compute $b^2 \pmod{p}$, and return to Step \ref{rmsquare}.
\item The prime $p$ is a probable prime.
\end{enumerate}
\end{alg}

It is clear the the Rabin-Miller Primality Test has time complexity $\widetilde{O}(\log^2(p))$ since we are computing only a few exponentiations. 

\begin{thm} \label{ecpp}(Elliptic Curve Primality Proving) Let $p > 6$ be a probable prime. Let $E/(\mathbb{Z}/p\mathbb{Z})$ be an elliptic curve of order $kq$, where $q$ is a prime such that $q > (p^{1/4} +1)^2$. If there exists a point $P$ on $E$ such that $kqP=0$ and $kP$ is defined and not equal to $0$, then $p$ is a prime.
\end{thm}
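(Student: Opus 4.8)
The plan is to argue by contradiction: assume $p$ is composite, select its smallest prime factor $\ell$, and reduce the entire configuration modulo $\ell$ so that it lives inside the genuine finite abelian group $E_\ell(\mathbb{F}_\ell)$, where Lagrange's theorem and the Hasse bound collide. First I would set $Q = kP$. By hypothesis $Q$ is a well-defined point with $Q \neq 0$ and $qQ = kqP = 0$. Since $Q \neq 0$, it is a finite (affine) point $(x_0,y_0)$ with coordinates in $\mathbb{Z}/p\mathbb{Z}$, and $q$ being prime together with $q Q = 0 \neq Q$ already pins down $Q$ as a candidate point of order $q$.

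Now suppose $p$ is composite and let $\ell$ be its smallest prime divisor, so $\ell \le \sqrt p$. Because $E$ is presented in short Weierstrass form over $\mathbb{Z}/p\mathbb{Z}$, the elements $2$ and $3$ are invertible, whence $\gcd(p,6)=1$ and every prime factor of $p$ is at least $5$. Reduction modulo $\ell$ is a ring homomorphism $\mathbb{Z}/p\mathbb{Z}\to\mathbb{F}_\ell$; since $E$ is an elliptic curve over $\mathbb{Z}/p\mathbb{Z}$ its discriminant is a unit, hence nonzero mod $\ell$, so the reduced curve $E_\ell$ is a nonsingular elliptic curve over the field $\mathbb{F}_\ell$ and $E_\ell(\mathbb{F}_\ell)$ is an honest abelian group. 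The finite point $Q$ reduces to a finite point $\bar Q \in E_\ell(\mathbb{F}_\ell)$, so $\bar Q \neq 0$, while $qQ = 0$ reduces to $q\bar Q = 0$. As $q$ is prime and $\bar Q \neq 0$, the order of $\bar Q$ is exactly $q$, so Lagrange's theorem gives $q \mid |E_\ell(\mathbb{F}_\ell)|$. The Hasse bound then forces $q \le |E_\ell(\mathbb{F}_\ell)| \le \ell + 1 + 2\sqrt\ell = (\sqrt\ell+1)^2 \le (p^{1/4}+1)^2 < q$, a contradiction. Hence $p$ must be prime, and the bound $q > (p^{1/4}+1)^2$ is exactly calibrated to close this chain.

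The delicate point, and the one I would treat most carefully, is that when $p$ is composite the set $E(\mathbb{Z}/p\mathbb{Z})$ is not a group: the chord-and-tangent formulas may demand inverting a zero divisor, so there is no a priori group structure to reduce. What rescues the argument is the word \emph{defined} in the hypothesis. Each intermediate addition used to compute $kP$ and $kqP$ succeeded with denominators invertible mod $p$, hence nonzero mod $\ell$ and therefore invertible in the field $\mathbb{F}_\ell$; thus every step of the computation reduces faithfully, and the reduction map is compatible with the group law on precisely the points that actually occur. I would isolate this reduction-compatibility as the key lemma, then verify the two supporting facts that feed it: that $Q \neq 0$ forces $Q$ to be affine, so its reduction $\bar Q$ is again affine and nonzero, and that good reduction holds because $\Delta$ being a unit mod $p$ gives $\Delta \not\equiv 0 \pmod{\ell}$. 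Once these checks are in place, the inequality chain above is immediate and the theorem follows.
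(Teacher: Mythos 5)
Your proof is correct, and it is worth noting that the paper itself offers no proof of this theorem at all: it states the result and defers to the references \cite{am} and \cite[ch.~14]{cox}. What you have reconstructed is precisely the standard Goldwasser--Kilian/Lenstra argument found in those sources: assume $p$ composite, pass to the smallest prime divisor $\ell \le \sqrt{p}$, observe that $\bar{Q}$ has exact order $q$ in the genuine group $E_\ell(\mathbb{F}_\ell)$, and let Lagrange collide with Hasse via $q \le (\sqrt{\ell}+1)^2 \le (p^{1/4}+1)^2 < q$. You also correctly identified and resolved the one genuinely delicate point, which many textbook sketches gloss over: when $p$ is composite, $E(\mathbb{Z}/p\mathbb{Z})$ carries no group structure, and the hypothesis that the computations are \emph{defined} is exactly what guarantees that every denominator is a unit mod $p$, hence nonzero mod $\ell$, so that each chord-and-tangent step reduces faithfully to the true group law over $\mathbb{F}_\ell$. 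One cosmetic slip: you write $qQ = kqP = 0$ at the level of $\mathbb{Z}/p\mathbb{Z}$, but the hypothesis only asserts that the computation of $(kq)P$ is defined and yields $0$; the clean statement is that, after reduction, $q\bar{Q} = qk\bar{P} = \overline{(kq)P} = 0$ holds in the honest group $E_\ell(\mathbb{F}_\ell)$, where associativity is available. Your reduction-compatibility lemma, applied to both the computation of $kP$ and of $(kq)P$, already delivers exactly this, so the fix is a single line of rewording rather than a gap in the argument.
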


We repeatedly use ECPP in Step \ref{secpp} of Algorithm \ref{mainalg}. In Step \ref{secpp}, even when $p=f_q+1 \pm x$ fails the Rabin-Miller Primality Test, we can still use it to test the primality of $f_q$ using ECPP. If $f_q$ is confirmed to be prime, then Step \ref{scheckorder} does indeed confirm the primality of $p$.

For a discussion of Elliptic Curve Primality Proving see \cite{am} and \cite[ch. 14]{cox}.

\bibliography{ellipticfib}

\newcommand{\etalchar}[1]{$^{#1}$}
\begin{thebibliography}{ACD{\etalchar{+}}06}

\bibitem[ACD{\etalchar{+}}06]{handbook}
Roberto Avanzi, Henri Cohen, Christophe Doche, Gerhard Frey, Tanja Lange, Kim
  Nguyen, and Frederik Vercauteren.
\newblock {\em Handbook of Elliptic and Hyperelliptic Curve Cryptography}.
\newblock Chapman \& Hall/CRC, Boca Raton, FL, 2006.

\bibitem[AM93]{am}
A.O.L. Atkin and F.~Morain.
\newblock Elliptic curves and primality proving.
\newblock {\em Mathematics of Computation}, 61:29--68, 1993.

\bibitem[Bac90]{bach}
Eric Bach.
\newblock Explicit bounds for primality testing and related problems.
\newblock {\em Mathematics of Computation}, 55(191):355--380, 1990.

\bibitem[BGL15]{caseq}
Pedro Berrizbeitia, Juan Jos$\acute{e}$~Alba Gonz$\acute{a}$les, and Florian
  Luca.
\newblock On the formula $f_p = u^2 + pv^2$.
\newblock {\em International Journal of Number Theory}, 11(3):185--191, 2015.

\bibitem[BL07]{fibform}
Christian Ballot and Florian Luca.
\newblock On the equation $x^2+dy^2=f_n$.
\newblock {\em Acta Arithmetica}, 127(2):145--155, 2007.

\bibitem[BLS12]{as5}
Reinier Br$\ddot{o}$ker, Kristin Lauter, and Andrew~V. Sutherland.
\newblock Modular polynomials via isogeny volcanoes.
\newblock {\em Mathematics of Computation}, 81:1201--1231, 2012.

\bibitem[Bro08]{broker3}
Reinier Broker.
\newblock A $p$-adic algorithm to compute the hilbert class polynomial.
\newblock {\em Mathematics of Computation}, 77:2417--2435, 2008.

\bibitem[BS07]{broker}
Reinier Br$\ddot{o}$ker and Peter Stevenhagen.
\newblock Efficient cm-constructions of elliptic curves over finite fields.
\newblock {\em Mathematics of Computation}, 76:2161--2179, 2007.

\bibitem[BS08]{broker2}
Reinier Br$\ddot{o}$ker and Peter Stevenhagen.
\newblock Construction elliptic curves of prime order.
\newblock {\em Contemporary Mathematics}, 463:17--28, 2008.

\bibitem[BV07]{bv}
Johannes Buchmann and Ulrich Vollmer.
\newblock {\em Binary quadratic forms: an algorithmic approach}, volume~20 of
  {\em Algorithms and Computations in Mathematics}.
\newblock Springer, 2007.

\bibitem[Cal17]{caldwell}
Chris Caldwell.
\newblock The top twenty: Fibonacci number.
\newblock \url{http://primes.utm.edu/top20/page.php?id=39}, 2017.
\newblock Retrieved October 6, 2017.

\bibitem[CH88]{parity}
Pierre Conner and Jurgen Hurrelbrink.
\newblock {\em Class number parity}.
\newblock World Scientific Publishing Co., Singapore, 1988.

\bibitem[Che12]{chenal}
Massimo Chenal.
\newblock Applications of complex multiplicaiton of elliptic curves.
\newblock Master's thesis, Universit$\grave{a}$ di Padova, Italy, 2012.

\bibitem[CL84]{cl}
Henri Cohen and Hendrik Lenstra.
\newblock Heuristics on class groups of number fields.
\newblock 1068:33--62, 1984.

\bibitem[Coh93]{cohen1}
Henri Cohen.
\newblock {\em A course in computation algebraic number theory}, volume 138 of
  {\em Graduate Texts in Mathematics}.
\newblock Springer-Verlag, Berlin Heidelberg, 1993.

\bibitem[Cox13]{cox}
David Cox.
\newblock {\em Primes of the form $x^2+py^2$ : Fermat, Class Field Theory, and
  Complex Multiplication}.
\newblock Wiley, Hoboken, New Jersey, 2013.

\bibitem[DFJP14]{super}
Luca De~Feo, David Jao, and J$\acute{e}$r$\hat{o}$me Pl$\hat{u}$t.
\newblock Towards quantum-resistant cryptosystems from supersingular elliptic
  curve isogenies.
\newblock {\em Journal of Mathematical Cryptology}, 8(3):209--247, 2014.

\bibitem[DLR15]{oddclass}
Harris~B. Daniels and \'{A}lvaro Lozano-Robledo.
\newblock On the number of isomorphism class of cm elliptic curves defined over
  a number field.
\newblock {\em Journal of Number Theory}, 157:367--396, 2015.

\bibitem[EEW01]{eds}
Manfred Einsiedler, Graham Everest, and Thomas Ward.
\newblock Primes in elliptic divisibility sequences.
\newblock {\em London Mathematical Society Journal of Computation and
  Mathematics}, 4:1--13, 2001.

\bibitem[Gee99]{gee}
Alice Gee.
\newblock Class fields by shimura reciprocity.
\newblock {\em Journal de Th$\acute{e}$rie des Nombres de Bordeaux}, 11:45--72,
  1999.

\bibitem[KK10]{classpoly2}
Elisavet Konstaninou and Aristides Kontogeorgis.
\newblock Ramanujan's class invariants and their use in elliptic curve
  cryptography.
\newblock {\em Computers and Mathematics with Applications}, 59:2901--2917,
  2010.

\bibitem[KKSZ09]{classpoly1}
Elisavet Konstaninou, Aristides Kontogeorgis, Yannis~C Stamatiou, and Christos
  Zaroliagis.
\newblock On the efficient generation of prime-order elliptic curves.
\newblock {\em Journal of Cryptology}, 23(3):477--503, 2009.

\bibitem[Koh96]{kohel}
David Kohel.
\newblock {\em Endomorphism rings of elliptic curves over finite fields}.
\newblock PhD thesis, University of California, Berkeley, 1996.

\bibitem[Kon14]{konto}
Aristides Kontogeorgis.
\newblock Constructing class invariants.
\newblock {\em Mathematics of Computation}, 83:1477--1488, 2014.

\bibitem[LR11]{alr}
\'{A}lvaro Lozano-Robledo.
\newblock {\em Elliptic Curves, Modular Forms, and Their L-functions},
  volume~58 of {\em Student Mathematics Library}.
\newblock American Mathematical Society, Providence, Rhode Island, 2011.

\bibitem[onl17]{online}
The on-line encyclopedia of integer sequences.
\newblock \url{https://oeis.org/A001605}, 2017.
\newblock Retrieved October 6, 2017.

\bibitem[Sch85]{schoof2}
Ren$\acute{e}$ Schoof.
\newblock Elliptic curves over finite fields and the computation of square
  roots mod $p$.
\newblock {\em Mathematics of Computation}, 44(170):483--494, 1985.

\bibitem[Sch95]{schoof}
Ren$\acute{e}$ Schoof.
\newblock Counting points on elliptic curves over finite fields.
\newblock {\em Journal de Th$\acute{e}$orie Des Nombres de Bordeaux},
  7(1):219--254, 1995.

\bibitem[Sil94]{silverman2}
Joseph~H. Silverman.
\newblock {\em Advanced Topics in the Arithmetic of Elliptic Curves}.
\newblock Springer New York, New York, 1994.

\bibitem[Sil09]{silverman1}
Joseph~H. Silverman.
\newblock {\em The arithmetic of elliptic curves}, volume 106 of {\em Graduate
  Texts in Mathematics}.
\newblock Springer-Verlag, New York, 2009.

\bibitem[SS71]{fastmult}
A.~Schönhage and V.~Strassen.
\newblock Schnelle multiplikation großer zahlen.
\newblock {\em Computing}, 7:281--292, 1971.

\bibitem[SS06]{eds2}
Joseph~H. Silverman and Nelson Stephens.
\newblock The sign of elliptic divisibility sequence.
\newblock {\em Journal of the Ramanujan Mathematical Society}, 21(1):1--17,
  2006.

\bibitem[SS14]{as4}
Igor~E. Shparlinski and Andrew~V. Sutherland.
\newblock On the distribution of atkin and elkies primes.
\newblock {\em Foundations of Computational Mathematics}, 14(2):285--297, 2014.

\bibitem[ST92]{silverman3}
Joseph~H. Silverman and John Tate.
\newblock {\em Ratioal Points on Elliptic Curves}.
\newblock Undergraduate Texts in Mathematics. Springer New York, New York,
  1992.

\bibitem[Sut11]{as2}
Andrew~V. Sutherland.
\newblock Computing hilbert class polynomials with the chinese remainder
  theorem.
\newblock {\em Mathematics of Computation}, 80:501--538, 2011.

\bibitem[Sut12a]{as3}
Andrew~V. Sutherland.
\newblock Acclerating the cm method.
\newblock {\em LMS Journal of Computation and Mathematics}, 15:172--204, 2012.

\bibitem[Sut12b]{as}
Andrew~V. Sutherland.
\newblock Constructing elliptic curves over finite fields with prescribed
  torsion.
\newblock {\em Mathematics of Computation}, 81:1131--1147, 2012.

\bibitem[VM02]{fibsum}
Nikolaĭ~Nikolaevich Vorobiev and Mircea Martin.
\newblock {\em Fibonacci numbers}.
\newblock Birkh$\ddot{a}$user Verlag, Basel - Boston - Berlin, 2002.

\bibitem[Wal60]{wall}
Donald Wall.
\newblock Fibonacci series modulo m.
\newblock {\em American Mathematical Monthly}, 67(6):525--532, 1960.

\bibitem[Wil82]{will}
H.~C. Williams.
\newblock A note on the fibonacci quotient $f_{p-\epsilon}/f_p$.
\newblock {\em Canadian Mathematical Bulletin}, 25(3):366--370, 1982.

\bibitem[Wil87]{will2}
H.~C. Williams.
\newblock Effective primality tests for some integers of the form $a5^n-1$ and
  $a7^n-1$.
\newblock {\em Mathematics of Computations}, 48:285--306, 1987.

\bibitem[Zag84]{zag}
Don Zagier.
\newblock L-series of elliptic curves, the birch-swinnerton-dyer conjecture,
  and the class number problem of gauss.
\newblock {\em Notices of the AMS}, 31:739--743, 1984.

\end{thebibliography}
\bibliographystyle{alpha}

\end{document}